\renewcommand{\frak}{\mathfrak}
\theoremstyle{plain}
\newtheorem{thm}{Theorem}[section]
\newtheorem{lem}[thm]{Lemma}
\newtheorem{cor}[thm]{Corollary}
\newtheorem{prop}[thm]{Proposition}
\theoremstyle{definition}
\newtheorem{defn}[thm]{Definition}
\theoremstyle{remark}
\newtheorem{rem}[thm]{Remark}
\crefname{thm}{theorem}{theorems}
\crefname{lem}{lemma}{lemmas}
\crefname{cor}{corollary}{corollaries}
\crefname{prop}{proposition}{propositions}
\crefname{mainthm}{theorem}{theorems}
\crefname{maincor}{corollary}{corollaries}
\crefname{defn}{definition}{definitions}
\crefname{conj}{conjecture}{conjectures}
\crefname{example}{example}{examples}
\crefname{exercise}{exercise}{exercises}
\crefname{prob}{problem}{problems}
\crefname{quest}{question}{questions}
\crefname{rem}{remark}{remarks}
\crefname{claim}{claim}{claims}
\crefname{axiom}{axiom}{axioms}
\crefname{hyp}{hypothesis}{hypotheses}
\crefname{notation}{notation}{notations}
\crefname{case}{case}{cases}
\numberwithin{equation}{section}
\definecolor{darkgreen}{cmyk}{1,0,1,.2}
\definecolor{m}{rgb}{1,0.1,1}
\newdimen\theight
\def\TeXref#1{%
             \leavevmode\vadjust{\setbox0=\hbox{{\tt
                     \quad\quad  {\small \textrm #1}}}%
             \theight=\ht0
             \advance\theight by \lineskip
             \kern -\theight \vbox to
             \theight{\rightline{\rlap{\box0}}%
             \vss}%
             }}%
\begin{document}
 \title{Transversely symplectic Dirac operators on transversely symplectic foliations} 

 \author[S.~D.~Jung]{Seoung Dal Jung}
 \address{Department of Mathematics\\
 Department of Mathematics\\
 Jeju National University\\
 Jeju 690-756\\
 Republic of Korea}
 \email{sdjung@jejunu.ac.kr}

\subjclass[2010]{53C12; 53C27;57R30}
\keywords{Transversely symplectic foliation, Transversely metaplectic structure, Transversely symplectic Dirac operator}

\begin{abstract}  
We study  transversely  metaplectic structures and transversely symplectic Dirac operators on  transversely symplectic foliations.  And we give the Weitzenb\"ock type formula for transversely symplectic Dirac operators.  Moreover, we estimate the lower bound of the eigenvalues of the transversely symplectic Dirac operator defined by the transverse Levi-Civita connection on transverse  K\"ahler  foliations.
 
\end{abstract}
\maketitle

\renewcommand{\thefootnote}{} \footnote{%
This work was also supported by the National Research Foundation of Korea (NRF) grant funded by the Korea government (MSIP) (NRF-2018R1A2B2002046).
.} \renewcommand{\thefootnote}{\arabic{footnote}} %
\setcounter{footnote}{0}

\section{Introduction}
Symplectic spinor fields were introduced by B. Kostant in \cite{Ko} in the context of geometric quantization. 
They are sections in an $L^2(\mathbb R^n)$-Hilbert space bundle over a symplectic manifold.
In 1995,  K. Habermann \cite{Ha1} defined the symplectic Dirac operator acting on symplectic spinor fields, which is defined in a similar way as the Dirac operator on Riemannian manifolds.  Although the whole construction follows the same procedure as one introduces the Riemannian Dirac operator, using the symplectic structure $\omega$  instead of the Riemannian metric $g$ on $M$,  the underlying algebraical structure of the symplectic Clifford algebra is completely different. For the classical Clifford algebra we have the Clifford multiplication $v\cdot v = -\Vert v \Vert^2$, whereas the symplectic Clifford algebra known as Weyl algebra is given by  the multiplication $u \cdot v - v \cdot u = -\omega_0(u,v)$, where $\omega_0$ is the standard symplectic form on $\mathbb R^{2n}$. From the properties of the Clifford multiplications, the Dirac operators have different properties.  

In this paper,  we study  symplectic spinor fields and symplectic Dirac operators on  transversely symplectic foliations.  Precisely,  we define transversely metaplectic structures  (Section 3) and  give  transversely symplectic Dirac operators $D_{\rm tr} $ and $\tilde D_{\rm tr}$ acting on  symplectic spinor fields (Section 4). The operators $D_{\rm tr} $ and $\tilde D_{\rm tr}$ are not transversely elliptic, and so we define the operator  $\mathcal P_{\rm tr} =\sqrt{-1}[\tilde D_{\rm tr},D_{\rm tr}]$, which is transversely elliptic and formally self-adjoint.   The operator $\mathcal P_{\rm tr}$ is a kind of Laplacian and so it seems to be quite natural  to study the differential operator $\mathcal P_{\rm tr}$ in the symplectic context instead of $D_{\rm tr}^2$.   In section 5, we give the Weitzenb\"ock type formula for the operator $\mathcal P_{\rm tr}$.  The properties of the foliated symplectic spinors  and the special symplectic spinors are given  in section 6 and section 7, respectively. In last section, we study the transversely symplectic Dirac operator defined by the transverse Levi-Civita connection on  transverse K\"ahler foliations. In particular,  we give the lower bound of the eigenvalues of $\mathcal P_{\rm tr}$ on  a transverse K\"ahler foliation of constant holomorphic sectional curvature.

\section{Transversely symplectic foliation}

Let $(M,\mathcal F,\omega)$ be  a transversely symplectic foliation  of codimension $2n$ on a smooth manifold $M$ of dimension $m=p+2n$ with a transversely symplectic form $\omega$.  That is,  $\omega$ is a  closed $2$-form of constant rank $2n$  on $M$ such that   $\ker\omega_x = T\mathcal F_x$ at any point $x\in M$ \cite{AJ,BG,LI}, where $T\mathcal F_x$ is the tangent space of the leave passing through $x$.  Trivially, $\omega$ is a basic form, that is, $i(X)\omega=0$ and $i(X)d\omega=0$ for any vector field $X\in T\mathcal F$.
  
 For examples, contact manifolds and cosymplectic manifolds have  transversely symplectic foliations, which are called as contact flows and cosymplectic flows, respectively \cite{AJ,Pa}.  Also,  a transverse  K\"{a}hler foliation is a transversely symplectic foliation with  a basic  K\"ahler form as a transversely symplectic form.   For more
examples, see  \cite{Cw,LI}.  

 
Let $Q=TM/T\mathcal F$ be the normal bundle of $\mathcal F$. Then the projection $\pi:TM\to Q$ induces a pullback map $\pi^*:\wedge^r Q^*\to \wedge^r T^*M$.  
Let $\Omega_{h}^r(\mathcal F) =\{\phi\in\Omega^r(M)\ |\ i(X)\phi=0 \ \textrm{for any}\ X\in T\mathcal F\}$ and   the linear map $\flat:\Gamma TM\to \Omega_h^1(\mathcal F)$ be defined by $\flat(X)=i(X)\omega$. Trivially, $\ker\flat =T\mathcal F$, and so $\Gamma Q \cong \Omega_h^1(\mathcal F)$. Hence  $\Omega_h^r(\mathcal F)\cong \wedge^r Q^*$ \cite{LI}. 
Clearly,  for any section $\varphi\in \wedge^r Q^*$, $\pi^*(\varphi)\in\Omega_h^r(\mathcal F)$, that is, $i(X)\pi^*(\varphi)=0$ for any $X\in T\mathcal F$. Since $\omega$ is basic, there is a section $\omega_Q\in\wedge^2 Q^*$ such that $\pi^*\omega_Q =\omega$. Thus,  at any point $x\in M$, $(Q_x,\omega_Q)$ is a symplectic vector space \cite{LI}. 

 Let $N\mathcal F$ be a subbundle of $TM$ orthogonal to  $T\mathcal F$ for some Riemannian metric on $M$.  Then $\flat :N\mathcal F \to \Omega_h^1(\mathcal F)$ is an isomorphism and $N\mathcal F\cong Q$. Now, let
 \begin{equation*}
 \mathfrak X_B(\mathcal F) =\flat^{-1}\Omega_B^1(\mathcal F),
 \end{equation*}
 where $\Omega_B^r(\mathcal F)$ is the space of basic $r$-forms.
 Then $X\in \mathfrak X_B(\mathcal F)$ satisfies $[X,Y]\in T\mathcal F$ for any $Y\in T\mathcal F$ \cite{AJ, BC}.
The elements of $\mathfrak X_B(\mathcal F)$ are said to be  {\it basic vector fields} on $M$.  

 Let $\{v_{1},\cdots ,v_{n},w_{1},\cdots
,w_{n}\}$ be a transversely symplectic frame of $\mathcal F$, that is, $v_i,w_i\in \mathfrak X_B(\mathcal F)$ satisfies
\begin{equation*}
\omega(v_i,w_j)=\delta_{ij},\quad \omega(v_i,v_j)=\omega(w_i,w_j)=0.
\end{equation*}
Trivially, if we put $\bar X =\pi(X)$ for any $X\in TM$, then $\{\bar v_i,\bar w_i\}$ is a symplectic frame on $\Gamma Q$ and  $\omega_Q$ is locally expressed as 
\[
\omega_Q =\sum_{i=1}^n \bar v_i^* \wedge \bar w_i^*\;,
\]
where  $\bar v_i^* = -i(\bar w_i)\omega_Q$ and $\bar w_i^*=i(\bar v_i)\omega_Q$ are dual sections.   Any section $s\in \Gamma Q$ is expressed by  $s=\sum_{i=1}^n\{\omega_Q(s, \bar w_i)\bar v_i -\omega_Q(s,\bar v_i)\bar w_i\}$.

Let $\nabla$ be a connection on $Q$. 
  Then the \emph{torsion vector field} $\tau_\nabla$ of $\nabla$ is given  by
\[
\tau_\nabla =\sum_{i=1}^n T_\nabla(v_i,w_i),
\]
where the torsion tensor $T_\nabla$ of $\nabla$ is defined by
\[
T_\nabla(X,Y)=\nabla_X\pi(Y)-\nabla_Y\pi(X)-\pi[X,Y]
\]
for any vector fields $X,Y\in\Gamma TM$.
It is easy to prove that the vector field $\tau_\nabla$ is well-defined; that is, it is independent to the choice of transversely symplectic frames of $\mathcal F$. 
A  \emph{transversely symplectic connection} $\nabla $  on $Q$ is one which
satisfies $\nabla \omega_Q =0$; that is, for all $X\in \Gamma TM$ and $s,t \in \Gamma Q$,
\[
X \omega_Q(s,t) = \omega_Q (\nabla_X s,t) + \omega_Q(s,\nabla_Xt)\;.
\]
There are infinitely many transversely symplectic connections on a transversely symplectic foliation, even infinitely many transversely symplectic connections without torsion (cf. \cite{Va1}).  A  transvesely symplectic connection without torsion (i.e., $\nabla\omega_Q=0$ and $T_\nabla=0$) is said to be {\it transverse Fedosov connection}.  A  transversely symplectic foliation with a transverse Fedosov connection  is said to be {\it transverse Fedosov foliation}.  The transverse Levi-Civita connection on a transverse K\"{a}hler foliation is an example of a  transverse Fedosov connection.
Note that  in contrary to the Levi-Civita connection in Riemannian geometry,  transverse Fedosov connection is not unique. For the study of an ordinary symplectic manifold,  see  \cite{Bl,Ha1,Ha3,To}  and a Fedosov manifold, see \cite{GR,KR}.

Now we define the transversal divergence $\rm {div}_\nabla (s)$ of $s\in \Gamma Q$ with respect to $\nabla$  by
\begin{align*}
{\rm div}_\nabla (s) = {\rm Tr}_Q (Y\to \nabla_Y s),
\end{align*}
that is,   ${\rm div}_\nabla (s)=\sum_{i=1}^n\{\bar v_i^*( \nabla _{v_{i}}s)+\bar w_i^*(\nabla _{w_{i}}s) \}$, equivalently,
\begin{equation}\label{2-2}
{\rm div}_\nabla (s)=\sum_{i=1}^n\{\omega_Q \left( \nabla _{v_{i}}s,\bar w_{i}\right) -\omega_Q \left(
\nabla _{w_{i}}s,\bar v_{i}\right) \}.
\end{equation}
Let $\nu={\frac{1}{n!}}\omega ^{n}$ be the transversal volume form of $
\mathcal{F}$. 
\begin{prop} Let  $\nabla$ be a transversely symplectic connection on $(M,\mathcal{F},\omega )$.  Then, for any   $s\in\Gamma Q$
\begin{equation*}
d(\pi^*s^\flat\wedge \omega ^{n-1})=(n-1)!\{\mathrm{div}_{\nabla
}(s)+\omega_Q (s,\tau_\nabla )\}\nu,
\end{equation*}
where $s^{\flat} =i(s)\omega_Q\in Q^*$.
\end{prop}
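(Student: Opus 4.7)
My plan is to reduce the identity to a symplectic trace computation in a local transversely symplectic frame. Since $\omega$ is closed, $d(\omega^{n-1})=(n-1)\omega^{n-2}\wedge d\omega=0$, so the left-hand side equals $d(\pi^*s^\flat)\wedge\omega^{n-1}$. Working in a local frame $\{v_i,w_i\}\subset\mathfrak X_B(\mathcal F)$ with dual horizontal $1$-forms $\theta^i=\pi^*\bar v_i^*$ and $\eta^i=\pi^*\bar w_i^*$, we have $\omega=\sum_i\theta^i\wedge\eta^i$ and $\nu=\theta^1\wedge\eta^1\wedge\cdots\wedge\theta^n\wedge\eta^n$. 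Expanding $\omega^{n-1}=(n-1)!\sum_k\theta^1\wedge\eta^1\wedge\cdots\widehat{\theta^k\wedge\eta^k}\cdots\wedge\theta^n\wedge\eta^n$ and tracking which basis $2$-forms contribute a nonzero multiple of $\nu$ gives the symplectic trace identity
\[
\theta^a\wedge\eta^b\wedge\omega^{n-1}=\delta^{ab}(n-1)!\,\nu,\qquad\theta^a\wedge\theta^b\wedge\omega^{n-1}=\eta^a\wedge\eta^b\wedge\omega^{n-1}=0.
\]
Hence for any $2$-form $\beta$ the $\nu$-coefficient of $\beta\wedge\omega^{n-1}$ is $(n-1)!\sum_i\beta(v_i,w_i)$.

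Next, I would apply Cartan's formula to $d(\pi^*s^\flat)$ on the basic pair $(v_i,w_i)$, noting that $\pi^*s^\flat(Y)=\omega_Q(s,\pi Y)$ since $s^\flat=i(s)\omega_Q$:
\[
d(\pi^*s^\flat)(v_i,w_i)=v_i\bigl(\omega_Q(s,\bar w_i)\bigr)-w_i\bigl(\omega_Q(s,\bar v_i)\bigr)-\omega_Q(s,\pi[v_i,w_i]).
\]
The transversely symplectic condition $\nabla\omega_Q=0$ expands the first two terms as $\omega_Q(\nabla_{(\cdot)}s,\cdot)+\omega_Q(s,\nabla_{(\cdot)}(\cdot))$, and the leftover $s$-terms combine via the torsion definition $T_\nabla(v_i,w_i)=\nabla_{v_i}\bar w_i-\nabla_{w_i}\bar v_i-\pi[v_i,w_i]$ into $\omega_Q(s,T_\nabla(v_i,w_i))$. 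The result is
\[
d(\pi^*s^\flat)(v_i,w_i)=\omega_Q(\nabla_{v_i}s,\bar w_i)-\omega_Q(\nabla_{w_i}s,\bar v_i)+\omega_Q(s,T_\nabla(v_i,w_i)).
\]

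Summing over $i$ and using the definitions of $\mathrm{div}_\nabla(s)$ and $\tau_\nabla=\sum_iT_\nabla(v_i,w_i)$ yields $\sum_id(\pi^*s^\flat)(v_i,w_i)=\mathrm{div}_\nabla(s)+\omega_Q(s,\tau_\nabla)$, and multiplying by $(n-1)!\nu$ via the symplectic trace identity delivers the desired formula. There is no deep obstacle here: the entire argument is bookkeeping in a local frame, with the two structural inputs $d\omega=0$ and $\nabla\omega_Q=0$ each entering exactly once. The main care lies in verifying the trace identity from the expansion of $\omega^{n-1}$ and in correctly folding the bracket term into the torsion; one also needs to check that the identity may be read as an equality of the $\nu$-components (equivalently, that only the transversely symplectic part of $d(\pi^*s^\flat)$ is being traced against $\omega^{n-1}$), which is automatic since $v_i,w_i$ are basic and $\pi[v_i,w_i]$ makes sense on $Q$.
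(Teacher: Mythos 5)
Your proposal is correct and follows essentially the same route as the paper: kill $d\omega^{n-1}$ by closedness, reduce $d(\pi^*s^\flat)\wedge\omega^{n-1}$ to the trace $\sum_i d(\pi^*s^\flat)(v_i,w_i)$ against the frame, then expand via $\nabla\omega_Q=0$ and absorb the bracket term into $T_\nabla$. The only difference is cosmetic: you make explicit the multinomial expansion of $\omega^{n-1}$ behind the "direct calculation" step (the paper's equation giving $d(\pi^*s^\flat\wedge\omega^{n-1})(v_1,\dots,w_n)=(n-1)!\sum_i d(\pi^*s^\flat)(v_i,w_i)$), which the paper simply asserts.
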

\begin{proof} Let $\{v_i,w_i\}$ be a transversely symplectic frame of $\mathcal F$ such that  $\nu (v_{1},w_{1},\cdots ,v_{n},w_{n})=1$. Then it suffices to prove that
\begin{equation}\label{2-3}
d(\pi^*s^\flat\wedge \omega ^{n-1})(v_{1},w_{1},\cdots
,v_{n},w_{n})=(n-1)!\{\mathrm{div}_{\nabla }(s)+\omega_Q (s,\tau_\nabla )\}.
\end{equation}%
Since $\omega $ is closed, we have
\begin{equation*}
d(\pi^*s^\flat\wedge \omega ^{n-1})=d (\pi^*s^\flat)\wedge \omega ^{n-1}.
\end{equation*}%
By a direct calculation, we get
\begin{equation}\label{2-4}
d(\pi^*s^\flat\wedge \omega ^{n-1})(v_{1},w_{1},\cdots
,v_{n},w_{n})=(n-1)!\sum_{i=1}^{n}d(\pi^*s^\flat) (v_{i},w_{i}).
\end{equation}%
From  the symplecity of $\nabla $ and $\pi^*s^\flat(Y)=s^\flat(\bar Y)=\omega_Q(s,\bar Y)$ for any $Y\in\Gamma TM$, we have that, for any $Y,Z \in \Gamma TM$
\begin{equation}\label{2-5}
d(\pi^*s^{\flat})(Y,Z)=\omega_Q (\nabla _{Y}s,\bar Z)-\omega_Q (\nabla _{Z}s, \bar Y)+\omega_Q
(s,T_\nabla(Y,Z)).
\end{equation}%
From (\ref{2-2}) and (\ref{2-5}), we get
\begin{align}\label{2-6}
\sum_{i=1}^{n}d(\pi^*s^{\flat })(v_{i},w_{i})& =\sum_{i=1}^{n}\{\omega_Q (\nabla _{v_{i}}s,\bar w_{i})-\omega_Q (\nabla_{w_{i}}s,\bar v_{i})  +\omega_Q (s,T_\nabla(v_{i},w_{i}))\} \notag\\
& =\mathrm{div}_{\nabla }(s)+\omega_Q (s,\tau_\nabla ).
\end{align}%
From (\ref{2-4}) and (\ref{2-6}),  the proof of (\ref{2-3}) follows.  
\end{proof}
Without loss of generality, we assume that $\mathcal F$ is oriented.  So, given an auxiliary Riemannian metric on $M$ with $N\mathcal F=T\mathcal F^\perp$, there is a unique  $p$-form $\chi_{\mathcal F}$ whose restriction to the leaves is the volume form of the leaves, called the {\it characteristic form} of $\mathcal F$. Now, let $\kappa$ be the corresponding mean curvature form of $\mathcal F$, which are precisely defined in \cite{AJ}.  If $\mathcal F$ is isoparametric (that is, $\kappa$ is basic), then
 $d\kappa=0$ \cite{AJ}. Also,  the Rummler's formula   \cite{AJ} is given by
\begin{equation}  
d\chi_{\mathcal{F}}=-\kappa \wedge \chi_{\mathcal{F}}+\varphi _{0}\;, \label{Rummler formula}
\end{equation}%
where $i(X_1)\cdots i(X_p)\varphi_0=0$ for any vector fields $X_j\in  T\mathcal F$  $(j=1,\cdots,p=\dim T\mathcal F)$.
 Then we have the following theorem. 
\begin{thm} (Transversal divergence theorem) Let $\nabla$ be a transversely symplectic connection on a closed $(M,\mathcal{F},\omega )$. 
Then, for any  $s\in \Gamma_0 Q$,
\begin{equation*}
\int_{M}\mathrm{div}_{\nabla }(s)\mu_M=\int_{M}\omega_Q(\bar\kappa^{\sharp} +\tau_\nabla, s) \mu_M,
\end{equation*}
where $\kappa^{\sharp}=\flat^{-1}(\kappa)$ and $\mu_M=\nu\wedge \chi_\mathcal F$ is the volume form of $M$. 
\end{thm}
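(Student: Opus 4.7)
The plan is to promote the pointwise identity of the preceding Proposition to a top-degree identity on $M$ and then apply Stokes' theorem on the closed manifold. First I would wedge both sides of
\[
d(\pi^*s^\flat\wedge \omega^{n-1}) = (n-1)!\{\mathrm{div}_\nabla(s)+\omega_Q(s,\tau_\nabla)\}\nu
\]
by the characteristic form $\chi_\mathcal{F}$, so that the right-hand side becomes $(n-1)!\{\mathrm{div}_\nabla(s)+\omega_Q(s,\tau_\nabla)\}\mu_M$, while on the left-hand side the graded Leibniz rule (recalling that $\pi^*s^\flat\wedge\omega^{n-1}$ has odd degree $2n-1$) gives
\[
d(\pi^*s^\flat\wedge \omega^{n-1})\wedge\chi_\mathcal{F} = d(\pi^*s^\flat\wedge \omega^{n-1}\wedge \chi_\mathcal{F}) + \pi^*s^\flat\wedge \omega^{n-1}\wedge d\chi_\mathcal{F}.
\]

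Next I would substitute Rummler's formula $d\chi_\mathcal{F}=-\kappa\wedge\chi_\mathcal{F}+\varphi_0$. The $\varphi_0$ contribution vanishes by a degree-counting argument: $\pi^*s^\flat\wedge\omega^{n-1}$ is horizontal of degree $2n-1$, so producing a top form of degree $p+2n$ by wedging with $\varphi_0$ would force $\varphi_0$ to absorb all $p$ leafwise directions, contradicting the defining property $i(X_1)\cdots i(X_p)\varphi_0=0$ for $X_j\in T\mathcal F$. The remaining contribution is $-\pi^*s^\flat\wedge\kappa\wedge\omega^{n-1}\wedge\chi_\mathcal{F}$, after commuting $\kappa$ past the even-degree form $\omega^{n-1}$ without a sign change.

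I would then establish the symplectic linear-algebra identity
\[
\pi^*s^\flat\wedge \kappa\wedge \omega^{n-1} = (n-1)!\,\omega_Q(s,\bar\kappa^\sharp)\,\nu
\]
by evaluating both sides in a local transversely symplectic frame $\{v_i,w_i\}$: expanding $\omega^{n-1}$ via the multinomial formula, only the "diagonal" pairing of $\pi^*s^\flat$ with the $\omega_Q$-dual of $\kappa$ survives, and one reads off the coefficient $(n-1)!\,\omega_Q(s,\bar\kappa^\sharp)$. Assembling the previous steps yields the top-form equation
\[
(n-1)!\{\mathrm{div}_\nabla(s)+\omega_Q(s,\tau_\nabla)+\omega_Q(s,\bar\kappa^\sharp)\}\mu_M = d(\pi^*s^\flat\wedge \omega^{n-1}\wedge \chi_\mathcal{F}).
\]

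Finally I would integrate over the closed manifold $M$; Stokes' theorem annihilates the exact form, and after dividing by $(n-1)!$ and applying the antisymmetry of $\omega_Q$ to swap arguments, I obtain the stated identity. The principal technical hurdle is the symplectic linear-algebra identity of the third paragraph, which demands careful combinatorial bookkeeping in the symplectic frame; the vanishing of the $\varphi_0$ contribution is conceptually cleaner but hinges on the correct horizontal/tangential degree accounting.
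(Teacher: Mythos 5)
Your proof is correct and follows essentially the same route as the paper: wedge the preceding Proposition's identity with $\chi_{\mathcal F}$, apply Rummler's formula, discard the $\varphi_0$ contribution by normal-degree counting, establish $\pi^*s^\flat\wedge\kappa\wedge\omega^{n-1}=(n-1)!\,\omega_Q(s,\bar\kappa^\sharp)\,\nu$ by evaluating in a transversely symplectic frame, and conclude via Stokes' theorem on the closed manifold. The only cosmetic difference is the direction in which you move the Leibniz term across the wedge with $\chi_{\mathcal F}$; the underlying computation and the sign bookkeeping are identical to the paper's.
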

\begin{proof}
 Since  the normal degree of $\varphi_0$ is 2,  the normal degree of $\pi^* s^{\flat }\wedge \omega ^{n-1}\wedge\varphi_0$ is  $2n+1$, which is zero.  Hence by the Rummler's formula (\ref{Rummler formula}), 
\begin{align}\label{2-7}
d(\pi^*s^{\flat }\wedge \omega ^{n-1}\wedge \chi_\mathcal F)
=d(\pi^*s^{\flat}\wedge \omega ^{n-1})\wedge \chi_\mathcal F +\pi^*s^{\flat}\wedge\kappa\wedge
\omega ^{n-1} \wedge \chi_\mathcal F.
\end{align}
Now we prove that 
\begin{equation}\label{2-8}
\pi^*s^{\flat}\wedge \kappa \wedge \omega ^{n-1}=(n-1)!\ \omega_Q(s,\bar\kappa^{\sharp}) \nu.
\end{equation}%
In fact, let $\pi^*s^{\flat}\wedge \kappa \wedge \omega ^{n-1}=f\nu $ for any function $f$. Then
\begin{align*}
f& =(\pi^*s^{\flat}\wedge \kappa \wedge \omega ^{n-1})(v_{1},w_1\cdots ,v_n,w_{n})
\\
& =(n-1)!\sum_{i=1}^{n}(\pi^*s^{\flat }\wedge \kappa )(v_{i},w_{i}) \\
& =(n-1)!\sum_{i=1}^n\{\pi^*s^{\flat}(v_{i})\kappa (w_{i})-\pi^*s^{\flat
}(w_{i})\kappa (v_{i})\} \\
& =(n-1)!\sum_{i=1}^n\{\omega_Q (s,\bar v_{i})\kappa (w_{i})-\omega_Q (s,\bar w_{i})\kappa
(v_{i})\} \\
& =(n-1)!\sum_{i=1}^n\{\omega_Q (s,\kappa (w_{i})\bar v_{i}-\kappa (v_{i})\bar w_{i})\} \\
& =(n-1)!\ \omega_Q(s,\bar\kappa^{\sharp})
\end{align*}%
because of  $\kappa =\flat(\kappa^{\sharp})= i(\kappa^{\sharp})\omega$.
From (\ref{2-7}), (\ref{2-8}) and Proposition 2.2, we have
\begin{equation*}
d(\pi^*s^{\flat }\wedge \omega ^{n-1}\wedge \chi_\mathcal F)=(n-1)!\{%
\mathrm{div}_{\nabla }(s)+\omega_Q (s,\tau_\nabla +\bar\kappa^{\sharp})\}\nu
\wedge \chi_\mathcal F.
\end{equation*}%
So  the proof follows from the Stokes' theorem. 
\end{proof}
\begin{cor} Let $\nabla$ be a  transversely symplectic connection  on a closed $(M,\mathcal F,\omega)$.  If $\mathcal F$ is minimal, then for any $s\in \Gamma Q$,
  \begin{equation*}
\int_{M}\mathrm{div}_{\nabla }(s)\mu_M=\int_{M}\omega_Q(\tau_\nabla,s) \mu_M.
\end{equation*}
\end{cor}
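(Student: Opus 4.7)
The plan is to derive this corollary as a direct specialization of the transversal divergence theorem (Theorem~2.3), using the defining characterization of a minimal foliation. Recall that $\mathcal F$ is called minimal precisely when its mean curvature form $\kappa$ vanishes identically on $M$. This is the only additional input beyond Theorem~2.3.

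Under the minimality hypothesis, $\kappa = 0$ on $M$. Since $\kappa^{\sharp} = \flat^{-1}(\kappa)$ and $\flat$ is a linear isomorphism $N\mathcal F \to \Omega^1_h(\mathcal F)$, we conclude that $\kappa^{\sharp} = 0$, and consequently $\bar\kappa^{\sharp} = \pi(\kappa^{\sharp}) = 0$ as a section of $Q$. Substituting into the identity from Theorem~2.3,
$$\int_{M}\mathrm{div}_{\nabla}(s)\,\mu_M = \int_{M}\omega_Q(\bar\kappa^{\sharp} + \tau_\nabla, s)\,\mu_M,$$
the contribution of $\bar\kappa^{\sharp}$ drops out, and we obtain precisely the stated identity. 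Since $M$ is closed, every $s \in \Gamma Q$ automatically lies in $\Gamma_0 Q$, so the compact-support hypothesis in Theorem~2.3 is met without any additional assumption on $s$.

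There is no real obstacle in this argument: the corollary is essentially a one-line consequence of Theorem~2.3 together with the standard definition of minimality in foliation theory. No new estimate, integration by parts, or Rummler-type identity needs to be invoked beyond what was already used to establish the transversal divergence theorem.
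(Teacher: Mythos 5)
Your proof is correct and follows exactly the route the paper intends: specialize Theorem~2.3 to $\kappa=0$, which forces $\bar\kappa^{\sharp}=0$, and note that on a closed manifold every smooth section of $Q$ has compact support. The paper itself gives no written proof of Corollary~2.4 precisely because it is this immediate consequence, so your argument coincides with the implicit one.
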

\begin{cor} Let  $\nabla$ be a transverse Fedosov connection on a closed $(M,\mathcal F,\omega)$.  Then for any $s\in \Gamma Q$, 
\begin{equation*}
\int_{M}\mathrm{div}_{\nabla }(s)\mu_M=\int_{M}\omega_Q(\bar\kappa^{\sharp},s) \mu_M.
\end{equation*}
In particular,  if $\mathcal F$ is  minimal, then
\begin{equation*}
\int_{M}\mathrm{div}_{\nabla }(s)\mu_M=0.
\end{equation*}

\end{cor}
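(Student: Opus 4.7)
The plan is to obtain Corollary 2.5 as an immediate specialization of the transversal divergence theorem (Theorem 2.3), using the defining properties of a transverse Fedosov connection and of minimality. No further integration by parts or Rummler-type computation is needed, since all of that work has already been done in proving Theorem 2.3; the corollary just reads off two simplifications.

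First I would recall that a transverse Fedosov connection on $(M,\mathcal F,\omega)$ is, by the definition given in Section 2, a transversely symplectic connection on $Q$ with vanishing torsion tensor, i.e.\ $\nabla\omega_Q=0$ and $T_\nabla=0$. The torsion vector field is
\[
\tau_\nabla=\sum_{i=1}^{n}T_\nabla(v_i,w_i),
\]
so $T_\nabla=0$ forces $\tau_\nabla=0$. Plugging $\tau_\nabla=0$ into the conclusion of Theorem 2.3 yields
\[
\int_M\mathrm{div}_\nabla(s)\,\mu_M=\int_M\omega_Q(\bar\kappa^{\sharp}+\tau_\nabla,s)\,\mu_M=\int_M\omega_Q(\bar\kappa^{\sharp},s)\,\mu_M,
\]
which is the first identity claimed in Corollary 2.5.

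For the second statement, recall that $\mathcal F$ being minimal means that the mean curvature form $\kappa$ vanishes identically. Then $\kappa^{\sharp}=\flat^{-1}(\kappa)=0$, and consequently $\bar\kappa^{\sharp}=\pi(\kappa^{\sharp})=0$. Substituting into the identity just derived gives $\int_M\mathrm{div}_\nabla(s)\,\mu_M=0$ for every $s\in\Gamma Q$, as required.

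There is no genuine obstacle here: the work sits in Proposition 2.2 and Theorem 2.3, and Corollary 2.5 is a two-line consequence. The only point worth being careful about is bookkeeping of signs and of the slot in which $\bar\kappa^{\sharp}$ appears in $\omega_Q(\cdot,\cdot)$, so I would verify that Theorem 2.3 is quoted with $\bar\kappa^{\sharp}+\tau_\nabla$ in the first argument and $s$ in the second, matching the formulation in Corollary 2.5, before dropping $\tau_\nabla$.
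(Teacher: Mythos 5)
Your proof is correct and is exactly the specialization the paper intends: Corollary 2.5 follows from Theorem 2.3 by observing that $T_\nabla=0$ forces $\tau_\nabla=0$ for a Fedosov connection, and that minimality gives $\kappa=0$ hence $\bar\kappa^{\sharp}=0$. The paper offers no separate proof, treating it as an immediate consequence just as you do.
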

\begin{rem}Let $(M,\alpha)$ be a contact manifold with a contact form $\alpha$ and let $(M,\eta,\Phi)$ be an almost cosymplectic manifold with a closed 1-form $\eta$ and a closed 2-form $\Phi$, respectively.   Then  the contact  (resp.  cosymplectic) flow $\mathcal F_\xi$, generated by the Reeb vector field $\xi$, is minimal and  transversely symplectic with the transversely symplectic form $\omega=d\alpha$ (resp. $\omega=\Phi$)  \cite{AJ,Pa}. In this case, $\ker\alpha$ and $\ker\eta$ are isomorphic to the normal bundle  of $(M,\alpha)$ and $(M,\eta,\Phi)$, respectively.  Denote by $(M,\mathcal F_\xi,\omega)$ a contact flow or cosymplectic flow.
\end{rem}
\begin{cor} Let  $\nabla$ be a transversely symplectic connection on  a closed $(M,\mathcal F_\xi,\omega)$.   For any  vector field $Y\in \ker \alpha$ (or, $\in\ker\eta$),
\begin{equation*}
\int_{M}\mathrm{div}_{\nabla }(Y)\mu_M=\int_{M}\omega(\tau_\nabla, Y) \mu_M.
\end{equation*}
\end{cor}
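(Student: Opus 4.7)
The plan is to reduce Corollary 2.6 directly to Corollary 2.3 by using the structure of contact and cosymplectic flows. From Remark 2.5, the flow $\mathcal F_\xi$ generated by the Reeb vector field is minimal, and $\ker\alpha$ (respectively $\ker\eta$) is isomorphic to the normal bundle $Q$ of $\mathcal F_\xi$. Therefore any vector field $Y\in\ker\alpha$ (or $Y\in\ker\eta$) can be identified, via the projection $\pi:TM\to Q$, with a section $\bar Y=\pi(Y)\in\Gamma Q$, so that all the objects appearing in the divergence theorem make sense.

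The first step is to rewrite the right-hand side: since $\pi^{*}\omega_Q=\omega$ and $Y$ has no leafwise component after projection, one has
\begin{equation*}
\omega(\tau_\nabla,Y)=\omega_Q(\tau_\nabla,\bar Y),
\end{equation*}
and similarly $\mathrm{div}_{\nabla}(Y)=\mathrm{div}_{\nabla}(\bar Y)$ under this identification. The second step is to observe that because $\mathcal F_\xi$ is minimal (its mean curvature form vanishes, so $\bar\kappa^{\sharp}=0$), Corollary 2.3 applies with $s=\bar Y$, giving
\begin{equation*}
\int_{M}\mathrm{div}_{\nabla}(\bar Y)\,\mu_M=\int_{M}\omega_Q(\tau_\nabla,\bar Y)\,\mu_M.
\end{equation*}
Combining the two displays yields precisely the claim.

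There is no real obstacle here beyond bookkeeping; the only point that deserves a line of care is checking that the identification $\ker\alpha\cong Q$ (and its cosymplectic analogue) is compatible with $\flat$, so that the divergence computed intrinsically in $\Gamma Q$ agrees with the one written in terms of $Y\in\ker\alpha$. Since $\flat:N\mathcal F\to\Omega_h^{1}(\mathcal F)$ is an isomorphism and $\ker\alpha$ (resp.\ $\ker\eta$) is a choice of transverse distribution complementary to the Reeb line, this compatibility is immediate. Consequently Corollary 2.6 is an essentially formal consequence of Corollary 2.3 combined with the minimality statement in Remark 2.5.
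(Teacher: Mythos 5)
Your proposal is correct and matches the paper's own proof, which simply invokes the minimality of $\mathcal F_\xi$ together with the corollary for minimal foliations; your additional remarks about the identification $\ker\alpha\cong Q$ via $\pi$ and $\flat$ just spell out bookkeeping the paper leaves implicit. The only cosmetic slip is that you cite the minimal-case corollary as ``Corollary 2.3'' rather than by its actual number, but the content referenced is the right one.
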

\begin{proof}  Since $\mathcal F_\xi$ is minimal, it is trivial from Corollary 2.4.
\end{proof}

Now, we prove the existence of the transversely symplectic connection satisfying $\nabla J=0$ on $(M,\mathcal F,\omega)$ with an $\omega_Q$-compatible almost complex structure $J$ on $Q$,  that is, for any  $s,t \in \Gamma Q$,  $g_Q(s,t) = \omega_Q(s,Jt)$ is an Hermitian metric on $Q$. 
\begin{prop} Let $(M,\mathcal F,\omega,J)$ be a transversely symplectic foliation with an $\omega_Q$-compatible almost complex structure $J$. Then there exists a transversely symplectic connection $\nabla$ such that  $\nabla J=0$ and $\nabla g_Q=0$.
\end{prop}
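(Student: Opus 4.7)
The plan is to mimic the classical construction of the second canonical connection on an almost Hermitian manifold, adapted to the transverse setting. Starting point: the existence of a transverse metric connection $\bar\nabla$ on $(Q,g_Q)$, that is, a connection on the normal bundle satisfying $\bar\nabla g_Q=0$. Such a $\bar\nabla$ always exists on a Riemannian foliation (one may take the transverse Levi-Civita connection), and the construction that follows does not require $\bar\nabla$ to be torsion-free, so no further hypothesis is needed.

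Given such a $\bar\nabla$, I would define a new connection on $Q$ by
\begin{equation*}
\nabla_X s \;=\; \bar\nabla_X s \;-\; \tfrac{1}{2}\, J\bigl(\bar\nabla_X J\bigr)s, \qquad X\in\Gamma TM,\ s\in\Gamma Q.
\end{equation*}
One checks directly that this is a linear connection on $Q$. The first verification is $\nabla J=0$: expand $(\nabla_X J)s=\nabla_X(Js)-J\nabla_X s$ and use the identity $J(\bar\nabla_X J)=-(\bar\nabla_X J)J$, which follows by differentiating $J^2=-\mathrm{Id}_Q$. The two resulting terms cancel, giving $\nabla J=0$.

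The second verification is $\nabla g_Q=0$. Because $\bar\nabla g_Q=0$, the defect is
\begin{equation*}
(\nabla_X g_Q)(s,t) \;=\; \tfrac{1}{2}\,g_Q\bigl(J(\bar\nabla_X J)s,t\bigr) \;+\; \tfrac{1}{2}\,g_Q\bigl(s,J(\bar\nabla_X J)t\bigr).
\end{equation*}
Here one uses that $J$ is skew-adjoint with respect to $g_Q$ (from $g_Q(Js,Jt)=g_Q(s,t)$ and $J^2=-\mathrm{Id}_Q$), and hence so is $\bar\nabla_X J$ (since $\bar\nabla$ is metric). Combined with $(\bar\nabla_X J)J=-J(\bar\nabla_X J)$ again, the two terms cancel and $\nabla g_Q=0$.

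Finally, since $\omega_Q(s,t)=g_Q(Js,t)$, the relations $\nabla J=0$ and $\nabla g_Q=0$ yield $\nabla \omega_Q=0$ by a one-line computation, so $\nabla$ is the desired transversely symplectic connection. The construction is essentially algebraic at each point, so there is no serious analytic obstacle; the only delicate point is to confirm that a starting transverse metric connection $\bar\nabla$ really exists in the foliated framework (where connections act on $Q$ along all of $TM$, not only along a horizontal complement) — but this is standard on a Riemannian foliation and does not depend on any further assumption on $\mathcal F$.
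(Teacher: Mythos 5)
Your proof is correct, and it is structurally dual to the paper's. The paper defines $\nabla_X s = \nabla'_X s + \tfrac12(\nabla'_X J)J s$, which is exactly your correction term after rewriting $-J(\bar\nabla_X J) = (\bar\nabla_X J)J$; the difference is that the paper starts from an arbitrary \emph{transversely symplectic} connection $\nabla'$ (whose existence it has already discussed, with a reference to Vaisman), verifies that $\nabla$ remains symplectic, proves $\nabla J = 0$, and then deduces $\nabla g_Q = 0$ from $\nabla\omega_Q = 0$ and $\nabla J = 0$. You run the logic in the opposite direction: start from a metric connection $\bar\nabla$, preserve $g_Q$-compatibility, prove $\nabla J = 0$, and read off $\nabla\omega_Q = 0$ at the end. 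Both work for the same algebraic reason: $\bar\nabla_X J$ (resp. $\nabla'_X J$) is skew-adjoint with respect to whichever of $g_Q$ or $\omega_Q$ the base connection preserves, and $J(\bar\nabla_X J) = -(\bar\nabla_X J)J$, so the correction kills $\nabla J$ without damaging the compatibility you started with. One small point: your caveat about needing a Riemannian foliation is unnecessary — any vector bundle carrying a fibre metric admits a metric connection, so the metric $g_Q = \omega_Q(\cdot, J\cdot)$ on $Q$ is all you need, and no bundle-like metric on $M$ or transverse Levi-Civita connection is required.
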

\begin{proof} Let  $\nabla'$ be an arbitrary transversely symplectic connection. 
We define $\nabla$ by
\begin{equation}\label{2-1}
\nabla_Xs = \nabla'_Xs + \frac12 (\nabla'_XJ)Js
\end{equation}
for any  $X\in \Gamma TM$ and $s\in\Gamma Q$. It is easily proved that $\nabla $ is transversely symplectic. From (\ref{2-1}), we get
\begin{align*}
\nabla_X Js=\frac12\nabla'_X Js +\frac12 J\nabla'_Xs
\end{align*}
and
\begin{align*}
J\nabla_Xs = \frac12 J\nabla'_Xs +\frac12 \nabla'_XJs.
\end{align*}
Hence $\nabla_XJs = J\nabla_Xs$, which implies 
$\nabla J=0$.  Next, since $\nabla J$ and $\nabla\omega_Q=0$, 
\begin{equation*}
(\nabla_X g_Q)(s,t) = (\nabla_X\omega_Q)(s,Jt)+\omega_Q(s,(\nabla_XJ)t)=0
\end{equation*}
for any  $X\in \Gamma TM$ and $s,t\in \Gamma Q$, which proves $\nabla g_Q=0$.
\end{proof}
 
\section{Transversely metaplectic structure}
Let $(M,\mathcal F,\omega)$ be a  transversely symplectic foliation  of codimension $2n$.   Let $P_{Sp}(Q)$ be  the principal $Sp(n,\mathbb R)$-bundle over $M$ of all symplectic frames on the normal bundle $Q$, where $Sp(n,\mathbb R)$ is the symplectic group (i.e., the group of all automorphisms of $\mathbb R^{2n}$ which preserve the standard symplectic form $\omega_0$ on $\mathbb R^{2n}$).  Since the first homotopy group of $Sp(n,\mathbb R)$ is isomorphic to $\mathbb Z$, there exists a unique connected double covering of $Sp(n,\mathbb R)$, which is known as {\it metaplectic group} $Mp(n,\mathbb R)$ \cite{Ha3}.  Let $\rho : Mp(n,\mathbb R)\to Sp(n,\mathbb R)$ be the two-fold covering map \cite{Ha1}.   A {\it transversely metaplectic structure} on $M$ is a principal $Mp(n,\mathbb R)$-bundle $\tilde P_{Mp}(Q)$ over $M$ together with a bundle morphism $F:\tilde P_{Mp}(Q)\to P_{Sp}(Q)$ which is equivariant with respect to $\rho$ (precisely, see \cite{Ha1, Ha2}).   A transversely symplectic foliation admits a transversely metaplectic structure if and only if the  second Stiefel-Withney class in  $H^2(Q,\mathbb Z_2)$ (the second ${\check C}$ech cohomology group of the normal bundle $Q$) vanishes (cf. \cite{LM}). 

 From now on, we  consider a transversely symplectic foliation with a fixed transversely metaplectic structure $\tilde P_{Mp}(Q)$.
 Let $\frak m:Mp(n,\mathbb R)\to U(L^2(\mathbb R^n))$ be the  metaplectic representation (Segal-Shale-Weil representation) \cite{Ka} which satisfies
 \begin{equation} \label{3-0}
 \frak m(g)\circ \frak r_S(v,t) = \frak r_S (\rho(g)v,t)\circ\frak m(g) 
 \end{equation}
 for all $g\in Mp(n,\mathbb R)$ and $(v,t)\in H(n)=\mathbb R^{2n}\times \mathbb R$, where $\frak r_S: H(n) \to U(L^2(\mathbb R^n))$ is the Schr\"odinger representation, $H(n)$ is the Heizenberg group  and  $U(L^2(\mathbb R^n))$ is the unitary group on $L^2(\mathbb R^n)$ of square integrable functions on $\mathbb R^n$ \cite{Ha3}.  
  The representation $\frak m$ stabilizes the Schwartz space $\mathcal  S(\mathbb R^n)\subset L^2(\mathbb R^n)$ of rapidly decreasing smooth functions on $\mathbb R^n$, that is, $\mathcal S(\mathbb R^n)$ is  $\frak m$-invariant \cite{Ka}. 
 The symplectic Clifford multiplication $\mu_0:\mathbb R^{2n}\otimes L^2(\mathbb R^n)\to L^2(\mathbb R^n)$ is defined by
 \begin{equation}\label{3-1}
 \mu_0(v\otimes f)=\sigma(v)f,
 \end{equation}
 where $\sigma:\mathbb R^{2n} \to  {\rm End}(L^2(\mathbb R^n))$ is the linear map such that $\sigma(a_j)=\sqrt{-1}x_j$ and $\sigma(b_j)={\partial\over\partial x_j} (j=1,\cdots,n)$ \cite{Ha3}. Here $\{a_i,b_i\}$ is the symplectic frame on $\mathbb R^{2n}$ with respect to the standard symplectic form $\omega_0$. For any $v,w\in \mathbb R^{2n}$, 
 \begin{equation}\label{3-1-1}
 \sigma(v)\sigma(w) -\sigma(w)\sigma(v)= -\sqrt{-1}\omega_0(v,w).
 \end{equation} 
 By using the metaplectic representation $\frak m$, we define the Hilbert bundle $Sp(\mathcal F)$ associated with the transversely metaplectic structure $\tilde P_{Mp}(Q)$ by
\begin{align}
Sp(\mathcal F)=\tilde P_{Mp}(Q) \times_\frak m L^2 (\mathbb R^n),
\end{align}
which is called  a {\it foliated symplectic spinor bundle} over $M$.  A {\it foliated symplectic spinor field} on $(M,\mathcal F,\omega)$ is a section $\varphi=[p,f]\in\Gamma  Sp(\mathcal F)$, the space of all smooth sections of  $Sp(\mathcal F)$ such that $[pg,f]=[p,\frak m(g^{-1})f]$ for any $g\in Mp(n,\mathbb R)$ and $f\in\mathcal S(\mathbb R^n)$.

Now, if we consider the normal bundle $Q$ as $Q=\tilde P_{Mp}(Q) \times_\rho \mathbb R^{2n}$, then a section in $Q$ can be written as equivalence classes $[p,v]$ of pairs $(p,v)\in \tilde P_{Mp}(Q)\times \mathbb R^{2n}$. Hence we can define the {\it symplectic Clifford multiplication} $\mu_Q:Q\otimes \Gamma Sp(\mathcal F)\to \Gamma Sp(\mathcal F)$ on $\Gamma Sp(\mathcal F)$  by
\begin{equation}\label{3-2}
\mu_Q([p,v]\otimes [p,f])=[p,\sigma(v)f]
\end{equation}
for any smooth section $[p,f]\in \Gamma Sp(\mathcal F)$.  Denote by 
\begin{equation}\label{3-3}
\mu_Q(s\otimes\varphi)=s\cdot\varphi
\end{equation}
 for any $s\in \Gamma Q$ and $\varphi\in \Gamma Sp(\mathcal F)$.
 By the property (\ref{3-1-1}) of $\sigma$, we have
\begin{equation}\label{3-4}
(s\cdot t- t\cdot s)\cdot\varphi =-\sqrt{-1}\omega_Q (s,t)\varphi
\end{equation}
for any  $s,t\in \Gamma Q$ and $\varphi\in \Gamma Sp(\mathcal F)$ \cite {Ha2,Ha3}.  Let  $<\cdot , \cdot >$ be a  canonical Hermitian scalar product  on $Sp(\mathcal F)$ given by the $L^2(\mathbb R^n)$-scalar product on the fibers. That is, for any $\varphi_1=[p,f_1],\varphi_2=[p,f_2]\in Sp(\mathcal F)$, we define $<\varphi_1,\varphi_2> = <f_1,f_2>$, where $<f_1,f_2>$ is the $L^2$-product of the functions $f_1,f_2\in L^2(\mathbb R^n)$.  For any $v\in\mathbb R^{2n}$ and $f_1, f_2 \in L^2(\mathbb R^n)$,  we get \cite[Lemma 1.4.1(2)]{Ha3}
\begin{equation*}
<\sigma(v)f_1,f_2> =-<f_1,\sigma(v)f_2>,
\end{equation*} 
which yields
\begin{equation}\label{3-6}
<s\cdot\varphi,\psi> =-<\varphi,s\cdot\psi>
\end{equation} 
for any $s\in \Gamma Q$ and $\varphi\in \Gamma Sp(\mathcal F)$.
 Let $\nabla$ be a spinor derivative on $Sp(\mathcal F)$ which is induced by a  transversely symplectic connection $\nabla$ on $Q$.  Similar to an ordinary manifold (see \cite{Ha1} or \cite[Proposition 3.2.6]{Ha3}),  $\nabla$ is locally given by
\begin{equation}\label{3-5}
\nabla_X\varphi =X(\varphi) +{1\over 2\sqrt{-1}}\sum_{j=1}^n\{\bar w_j\cdot\nabla_X \bar v_j -\bar v_j\cdot\nabla_X\bar w_j\}\cdot\varphi   
\end{equation}
for any vector field $X\in \Gamma TM$, where $X(\varphi) = [p,X(f)]$ for $\varphi=[p,f]\in\Gamma Sp(\mathcal F)$.  
 Then we have the following properties on $\Gamma Sp(\mathcal F)$ :
\begin{align}
 &\nabla_X (s\cdot\varphi)=(\nabla_X s)\cdot\varphi + s\cdot\nabla_X\varphi,\label{3-7}\\
&X<\varphi,\psi>=<\nabla_X\varphi,\psi> +<\varphi,\nabla_X\psi>\label{3-8}
\end{align}
for any $s\in\Gamma Q, \ X\in \Gamma TM$ and $\varphi,\psi\in \Gamma Sp(\mathcal F)$  \cite{Ha1,Ha3}.
 And the curvature tensor $R^S$ of the spinor derivative $\nabla$ on $\Gamma Sp(\mathcal F)$ is given by
\begin{align}\label{3-9}
R^S(X,Y)\varphi&={\sqrt{-1}\over 2}\sum_{i=1}^n \{\bar v_i\cdot R^\nabla(X,Y)\bar w_i-\bar w_i\cdot R^\nabla(X,Y)\bar v_i\}\cdot\varphi\\
&={\sqrt{-1}\over 2}\sum_{i=1}^n \{R^\nabla(X,Y)\bar w_i\cdot \bar v_i-R^\nabla(X,Y)\bar v_i\cdot \bar w_i\}\cdot\varphi
\end{align}
for any vector fields $X,Y\in \Gamma TM$, 
where $R^\nabla(X,Y)=[\nabla_X,\nabla_Y] -\nabla_{[X,Y]}$ is the curvature tensor of $\nabla$  on $Q$  \cite{Ha1}.
Moreover,  the curvature tensor  $R^\nabla$ satisfies the following.
\begin{lem}  Let  $(M,\mathcal F,\omega)$ be a transversely symplectic foliation with a transversely symplectic connection $\nabla$. Then
for any $X,Y\in\Gamma TM$ and $s,t\in\Gamma Q$, 
\begin{equation}\label{3-11}
\omega_Q(R^\nabla(X,Y)s,t) = \omega_Q(R^\nabla(X,Y)t,s).
\end{equation}
Moreover, if  $\nabla$ satisfies  $\nabla J=0$ for an $\omega_Q$-compatible almost complex structure $J$ on $Q$, then
\begin{equation}\label{3-12}
\omega_Q(R^\nabla(X,Y)Js,Jt) =\omega_Q(R^\nabla(X,Y)s,t).
\end{equation}
\end{lem}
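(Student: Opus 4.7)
My plan is to derive both identities directly from the parallelism conditions $\nabla\omega_Q=0$ and $\nabla J=0$, by applying the standard fact that any tensor preserved by $\nabla$ is annihilated by the curvature operator. The main work is just bookkeeping with the skew-symmetry of $\omega_Q$ and the $\omega_Q$-compatibility of $J$; no hard analysis is involved.

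For the first identity, I would start from the hypothesis that $\nabla$ is transversely symplectic, so that the $2$-tensor $\omega_Q$ on $Q$ is parallel. Extending the connection to tensor powers of $Q^*$, parallelism implies $R^\nabla \omega_Q=0$, which unpacks to
\begin{equation*}
(R^\nabla(X,Y)\omega_Q)(s,t)=-\omega_Q(R^\nabla(X,Y)s,t)-\omega_Q(s,R^\nabla(X,Y)t)=0.
\end{equation*}
Then I would use the skew-symmetry $\omega_Q(s,u)=-\omega_Q(u,s)$ applied to $u=R^\nabla(X,Y)t$ to convert the second term, yielding $\omega_Q(R^\nabla(X,Y)s,t)=\omega_Q(R^\nabla(X,Y)t,s)$, which is (\ref{3-11}). (One can equivalently verify this by differentiating $X(Y\omega_Q(s,t))-Y(X\omega_Q(s,t))-[X,Y]\omega_Q(s,t)=0$ using only $\nabla\omega_Q=0$, but the tensorial route above is cleaner.)

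For the second identity, the key extra input is that $\nabla J=0$ implies $R^\nabla(X,Y)$ commutes with $J$. Indeed, since $\nabla J=0$, we have $(R^\nabla(X,Y)J)s=R^\nabla(X,Y)(Js)-J(R^\nabla(X,Y)s)=0$. Hence
\begin{equation*}
\omega_Q(R^\nabla(X,Y)Js,Jt)=\omega_Q(JR^\nabla(X,Y)s,Jt).
\end{equation*}
Now I would invoke the $\omega_Q$-compatibility of $J$: since $g_Q(s,t)=\omega_Q(s,Jt)$ is Hermitian (in particular symmetric), $J$ is symplectic, that is $\omega_Q(Ju,Jv)=\omega_Q(u,v)$. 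Applying this with $u=R^\nabla(X,Y)s$ and $v=t$ finishes (\ref{3-12}).

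The only mild subtlety is verifying that $\omega_Q$-compatibility of $J$ genuinely entails $J^*\omega_Q=\omega_Q$; this follows because the symmetry $g_Q(s,t)=g_Q(t,s)$ together with $J^2=-\mathrm{id}$ forces $\omega_Q(s,Jt)=\omega_Q(t,Js)=-\omega_Q(Js,t)$, from which $\omega_Q(Js,Jt)=\omega_Q(s,t)$ is immediate. Once this is noted, both identities reduce to one-line manipulations, so I don't anticipate any real obstacle beyond keeping signs straight.
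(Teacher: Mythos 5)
Your proof is correct, and it supplies exactly the standard argument the paper elides with ``The proofs are easy'': parallelism of $\omega_Q$ (resp. $J$) forces the curvature endomorphism to annihilate $\omega_Q$ (resp. commute with $J$), and the remaining steps are the skew-symmetry of $\omega_Q$ and the $J$-invariance $\omega_Q(J\cdot,J\cdot)=\omega_Q(\cdot,\cdot)$, which you correctly derive from the symmetry of $g_Q$ together with $J^2=-\mathrm{id}$. Nothing to add.
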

\begin{proof}  The proofs are easy.
\end{proof}

\section{Transversely symplectic Dirac operators}
Let $(M,\mathcal F,\omega,\nabla)$ be a transversely symplectic foliation with 
 fixed a transversely metaplectic structure and  a transversely symplectic connection $\nabla$. 
  Let  
\begin{equation*}
\nabla_{\rm tr}=\pi\circ\nabla : \Gamma Sp(\mathcal F)\overset{\nabla}\to \Gamma (TM^*\otimes Sp(\mathcal F))\overset{\pi}\to \Gamma (Q^*\otimes Sp(\mathcal F)),
\end{equation*}
where $\nabla$ is the spinorial derivative induced from the transversely symplectic connection  on $Q$.  
  Then we  define the operator $D_{\rm tr}'$ by 
\begin{equation}\label{4-1}
D_{\rm tr}'=\mu_Q\circ\nabla_{\rm tr}:\Gamma Sp(\mathcal F)\overset{\nabla_{\rm tr}} \to \Gamma (Q^*\otimes Sp(\mathcal F))\overset{\omega_Q}\cong \Gamma(Q\otimes Sp(\mathcal F))\overset{\mu_Q}\to \Gamma Sp(\mathcal F),
\end{equation}
where $Q^*\cong Q$ by the symplectic structure $\omega_Q$ such that $i(s)\omega_Q\cong s$ for any $s\in \Gamma Q$.
If we identify $Q^*$ and $Q$ by the  Riemannian metric $g_Q$ associated to $\omega_Q$, then we obtain a second operator $\tilde D_{\rm tr}'$ by
\begin{equation}\label{4-2}
\tilde D_{\rm tr}'=\mu_Q\circ\nabla_{\rm tr}:\Gamma Sp(\mathcal F)\overset{\nabla_{\rm tr}}  \to \Gamma (Q^*\otimes Sp(\mathcal F))\overset{g_Q}\cong \Gamma(Q\otimes Sp(\mathcal F))\overset{\mu_Q}\to \Gamma Sp(\mathcal F).
\end{equation}
  From (\ref{4-1}),  $D_{\rm tr}'$ is locally given by
    \begin{align*}
    D_{\rm tr}'\varphi &=\mu_Q(\nabla_{\rm tr}\varphi) =\mu_Q(\sum_{i=1}^n \{\bar v_i^*\otimes\nabla_{v_i}\varphi + \bar w_i^*\otimes\nabla_{w_i}\varphi\}).
    \end{align*}
   Since $\bar v_i^* =-i(\bar w_i)\omega_Q\cong -\bar w_i$ and $\bar w_i^*=i(\bar v_i)\omega_Q\cong \bar v_i$,  we have from (\ref{3-2}) and (\ref{3-3}), 
  \begin{equation}\label{4-3}
D_{\rm tr}'\varphi=\sum_{i=1}^n \{\bar v_i\cdot\nabla_{w_i}\varphi-\bar w_i\cdot\nabla_{v_i}\varphi\}.
\end{equation}
Let $J$ be an $\omega_Q$-compatible almost complex structure on $Q$.  Since $\bar v_i^* (s)=-g_Q(J\bar w_i,s)$ and $\bar w_i^*(s)=g_Q(J\bar v_i,s)$ for any $s\in\Gamma Q$,  from (\ref{4-2}), $\tilde D_{\rm tr}'$ is locally  given by
\begin{equation}\label{4-4}
\tilde D_{\rm tr}'\varphi=\sum_{i=1}^n\{J\bar v_i\cdot\nabla_{w_i}\varphi -J\bar w_i\cdot\nabla_{v_i}\varphi\}.
\end{equation}

\begin{rem}
The definitions of $D_{\rm tr}'$ and $ \tilde D_{\rm tr}'$ depend on a choice of a transversely symplectic connection on $Q$ as well as on a choice of a transversely metaplectic structure of $\mathcal F$. Moreover,  $\tilde D_{\rm tr}'$  also depends on an arbitrary almost complex structure $J$ compatible with $\omega_Q$ (cf. \cite{Ha1,Ha3,Ka,Ko}). 
\end{rem}
In what follows,  we fix  a transversely metaplectic structure and an $\omega_Q$-compatible almost complex structure $J$ on $(M,\mathcal F,\omega)$. 
From (\ref{3-6}) $\sim$ (\ref{3-8}),  we get 
\begin{align*}
<D_{\rm tr}'\varphi,\psi>&=<\varphi, D_{\rm tr}'\psi>\\
& +\sum_{i=1}^n  \{v_i<\varphi,\bar w_i\cdot\psi> -w_i<\varphi,\bar v_i\cdot\psi>+<\varphi,(\nabla_{w_i}\bar v_i - \nabla_{v_i}\bar w_i)\cdot\psi>\}
\end{align*}
for any $\varphi, \psi \in \Gamma Sp(\mathcal F)$.
If we choose $s\in \Gamma Q$ such that $ \omega_Q(s,t) =<\varphi,t\cdot\psi>$ for any $t\in \Gamma Q$, then $\nabla\omega_Q=0$ implies 
\begin{align*}
&\sum_{i=1}^n \{v_i <\varphi,\bar w_i\cdot\psi>- w_i<\varphi,\bar v_i\cdot\psi> +<\varphi, (\nabla_{w_i}\bar v_i - \nabla_{v_i}\bar w_i)\cdot\psi>\}\\
&=\sum_{i=1}^n\{v_i\omega_Q(s,\bar w_i) - w_i\omega_Q(s,\bar v_i)+\omega_Q(s,\nabla_{w_i}\bar v_i -\nabla_{v_i}\bar w_i)\}\\
&=\sum_{i=1}^n \{\omega_Q(\nabla_{v_i}s,\bar w_i)-\omega_Q(\nabla_{w_i}s,\bar v_i)\}\\
&={\rm div}_\nabla(s)
\end{align*}
and so
\begin{equation}\label{4-5}
<D_{\rm tr}'\varphi,\psi>=<\varphi,D_{\rm tr}'\psi> + {\rm div}_\nabla(s).
\end{equation}
If we  integrate (\ref{4-5}) with the transversal divergence theorem (Theorem 2.3), then
\begin{align*}
\int_M <D_{\rm tr}'\varphi,\psi> = \int_M <\varphi,D_{\rm tr}'\psi -(\bar\kappa^\sharp+\tau_\nabla)\cdot\psi>.
\end{align*}
Hence  the formal adjoint operator $D_{\rm tr}'^*$ is given by
\begin{align*}
D_{\rm tr}'^*\varphi=D_{\rm tr}'\varphi -(\bar\kappa^{\sharp}+\tau_\nabla)\cdot\varphi,
\end{align*}
which implies that $D_{\rm tr}'$ is not formally self-adjoint.   So if we put $D_{\rm tr}$ by 
\begin{align}\label{4-7}
D_{\rm tr}\varphi=D_{\rm tr}'\varphi -\frac12 (\bar\kappa^{\sharp}+\tau_\nabla)\cdot\varphi,
\end{align}
then $D_{\rm tr}$  is formally self-adjoint.  This operator $D_{\rm tr}$ is said to be  {\it transversely symplectic Dirac operator} of $\mathcal F$. 
   
   In addition,  if $\nabla$ satisfies $\nabla J=0$,  then
\begin{align*}
<\tilde D_{\rm tr}'\varphi,\psi>&=<\varphi,\tilde D_{\rm tr}'\psi>\\
& + \sum_{i=1}^n\{v_i<\varphi,J\bar w_i\cdot\psi>-w_i<\varphi,J\bar v_i\cdot\psi>+<\varphi,J (\nabla_{w_i}\bar v_i -\nabla_{v_i}\bar w_i)\cdot\psi>\}.
\end{align*}
If we choose $s\in\Gamma Q$ such that $\omega_Q(s,t) = <\varphi,Jt\cdot\psi>$ for any $t\in \Gamma Q$,  then
\begin{align}\label{4-8}
<\tilde D_{\rm tr}'\varphi,\psi>=<\varphi,\tilde D_{\rm tr}'\psi> +{\rm div}_\nabla (s).
\end{align}
Hence  by integrating (\ref{4-8}) together with  the transversal divergence theorem (Theorem 2.3),  the formal adjoint operator $\tilde D_{\rm tr}'^*$ is given by
\begin{align*}
\tilde D_{\rm tr}'^*\varphi = \tilde D_{\rm tr}'\varphi -J(\bar\kappa^{\sharp}+\tau_\nabla)\cdot\varphi,
\end{align*}
which implies that $\tilde D_{\rm tr}'$ is also not formally self-adjoint. Therefore, if we put $\tilde D_{\rm tr}$ by
\begin{align}\label{4-10}
\tilde D_{\rm tr} \varphi = \tilde D_{\rm tr}'\varphi -\frac12 J(\bar\kappa^{\sharp}+\tau_\nabla)\cdot\varphi,
\end{align}
then $\tilde D_{\rm tr}$ is formally self-adjoint.     
    The operator $\tilde D_{\rm tr}$ is also said to be   {\it second  transversely symplectic Dirac operator} of $\mathcal F$. 
Hence we have the following theorem.
\begin{thm} Let $(M,\mathcal F,\omega,\nabla)$ be a transversely symplectic foliation with a transversely metaplectic structure and a transversely symplectic connection $\nabla$  on a closed, connected  manifold $M$. Then  $D_{\rm tr}$ is   formally self-adjoint. In particular, if  $\nabla J=0$, then $\tilde D_{\rm tr}$ is also formally self-adjoint.
\end{thm}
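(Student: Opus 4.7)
The plan is to reduce the theorem to a one-line algebraic check that exploits the skew-Hermitian character of the symplectic Clifford multiplication by normal sections, namely property (3.6): for every $s\in\Gamma Q$, the endomorphism $s\cdot$ satisfies $(s\cdot)^* = -s\cdot$ with respect to $<\cdot,\cdot>$. Writing $K=\bar\kappa^\sharp+\tau_\nabla\in\Gamma Q$ for brevity, this gives $(K\cdot)^*=-K\cdot$, and likewise $((JK)\cdot)^*=-(JK)\cdot$ since $JK\in\Gamma Q$ whenever $J$ is present.

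First I would harvest the formal adjoint of the auxiliary operator $D_{\rm tr}'$ that is already implicit in (4.5). Integrating (4.5) over $M$ against $\mu_M$, applying the transversal divergence theorem (Theorem 2.3), and using that $M$ is closed so that Stokes' theorem kills the exact term, I obtain
\begin{equation*}
(D_{\rm tr}')^*\varphi = D_{\rm tr}'\varphi - K\cdot\varphi.
\end{equation*}
The factor $\tfrac12$ in the definition (4.7) of $D_{\rm tr}$ is calibrated precisely to absorb this defect. Taking the formal adjoint of (4.7) and invoking $(K\cdot)^* = -K\cdot$, I would compute
\begin{equation*}
D_{\rm tr}^* = (D_{\rm tr}')^* - \tfrac12 (K\cdot)^* = D_{\rm tr}' - K\cdot + \tfrac12 K\cdot = D_{\rm tr}' - \tfrac12 K\cdot = D_{\rm tr},
\end{equation*}
which proves the first claim.

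For the second assertion I would run the same scheme with $\tilde D_{\rm tr}'$ in place of $D_{\rm tr}'$. The hypothesis $\nabla J=0$ enters critically in order to rewrite the boundary contribution in $<\tilde D_{\rm tr}'\varphi,\psi>$ as an honest transversal divergence as in (4.8); without it $J$ would fail to commute with $\nabla$ and extra uncontrolled terms would appear. Granted $\nabla J=0$, equation (4.8) together with Theorem 2.3 yields $(\tilde D_{\rm tr}')^* = \tilde D_{\rm tr}' - (JK)\cdot$, and the identical cancellation produces $\tilde D_{\rm tr}^* = \tilde D_{\rm tr}$. There is no real obstacle: the hard work was done in deriving (4.5) and (4.8). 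The only point to watch is that (3.6) applies uniformly to every section of $Q$ (including $K$ and $JK$), and that Proposition 2.8 guarantees $J$-parallel transversely symplectic connections exist, so the second claim is nonvacuous.
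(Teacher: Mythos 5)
Your argument is correct and follows the paper's own route exactly: derive $(D_{\rm tr}')^* = D_{\rm tr}' - (\bar\kappa^\sharp+\tau_\nabla)\cdot$ from (4.5) and the transversal divergence theorem, then observe that the $\frac12$-correction in (4.7) together with the skew-symmetry (3.6) forces $D_{\rm tr}^*=D_{\rm tr}$, with the identical computation for $\tilde D_{\rm tr}$ using (4.8) under the hypothesis $\nabla J=0$. No gaps.
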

Let $\xi\in T_x^*M$ and $f$ be a smooth function on $M$ such that $df_x =\xi$ and $f(x)=0$. 
 And let $\tilde\varphi\in \Gamma Sp(\mathcal F)$ and $\varphi\in Sp_x(\mathcal F)$ such that $\tilde\varphi (x)=\varphi$. Then the principal symbols of  $D_{\rm tr}$ and $\tilde D_{\rm tr}$  are given by
\begin{align*}
\sigma(D_{\rm tr})_{\xi}\varphi &= \sum_{i=1}^n \{df(w_i)\bar v_i - df(v_i)\bar w_i\}\cdot\varphi,\\
\sigma(\tilde D_{\rm tr})_{\xi}\varphi &=\sum_{i=1}^n \{df(w_i)J\bar v_i - df(v_i)J\bar w_i\}\cdot\varphi,
\end{align*}
respectively.  Precisely, if $\xi\in Q^*$, then  
\begin{align}
\sigma(D_{\rm tr})_\xi\varphi = \bar\xi^\sharp\cdot\varphi,\quad \sigma(\tilde D_{\rm tr})_\xi\varphi = J\bar\xi^\sharp\cdot\varphi,
\end{align}
where $\xi = i(\xi^\sharp)\omega = g_Q(J\xi^\sharp,\cdot)$. 
If  $\xi\in (T_x\mathcal F)^*$, then $df(v_i)=df(w_i)=0$. So $\sigma(D_{\rm tr})_\xi =\sigma(\tilde D_{\rm tr})_\xi=0$.  
This implies that the principal symbols are not isomorphisms. And so $D_{\rm tr}$ and $\tilde D_{\rm tr}$ are not  elliptic. Moreover, they are not transversally elliptic because $\bar \xi^\sharp\cdot\varphi =0$ does not implies $\bar \xi^\sharp=0$.

Now, we introduce a new transversally elliptic operator of second order which is of Laplace type.
\begin{defn} Let $\mathcal P_{\rm tr}:\Gamma Sp(\mathcal F) \to \Gamma Sp(\mathcal F)$ be the second order operator defined by 
\begin{equation*}
\mathcal P_{\rm tr}=\sqrt{-1}[\tilde D_{\rm tr},D_{\rm tr}].
\end{equation*}
\end{defn}
Trivially, $\mathcal P_{\rm tr}$ is formally self-adjoint. Moreover,  if  $\xi \in Q^*$ at $x$, then the principal symbol  of $\mathcal P_{\rm tr}$ is $\sigma(\mathcal P_{\rm tr})_\xi=\omega_Q(J\bar \xi^\sharp,\bar \xi^\sharp) = -g_Q(\bar \xi^\sharp,\bar \xi^\sharp)$.  If  $\xi\in (T_x\mathcal F)^*$, then $\sigma(\mathcal P_{\rm tr})_\xi =0$. So, $\mathcal P_{\rm tr}$ is a transversally elliptic operator of Laplace type. 


\section{The Weitzenb\"ock formula}
Let $(M,\mathcal F,\omega,\nabla)$ be a transversely symplectic foliation with 
fixed a transversely metaplectic structure and  a transversely symplectic connection $\nabla$.  In this section,
we study the Weitzenb\"ock formula for the operator $\mathcal P_{\rm tr}$ on $M$.

Let $\{e_1,\cdots,e_{2n}\}$ be a unitary basic  frame  in $Q$,  that is, $e_j\in\mathfrak X_B(\mathcal F) (j=1,\cdots,2n)$ satisfies $g_Q(\bar e_i,\bar e_j)=\delta_{ij}$ and $\bar e_{n+i}=J\bar e_i (i=1,\cdots,n)$. 
Then we have the following.  
 \begin{lem}  The operators $D_{\rm tr}'$ and $\tilde D_{\rm tr}'$ are also given by 
\begin{equation*}
D_{\rm tr}'\varphi=-\sum_{i=1}^{2n} J\bar e_i\cdot\nabla_{e_i}\varphi,\quad \tilde D_{\rm tr}'\varphi=\sum_{i=1}^{2n} \bar e_i\cdot\nabla_{e_i}\varphi.
\end{equation*}
\end{lem}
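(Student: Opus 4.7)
The plan is to reduce the identity to the already–established local expressions \eqref{4-3} and \eqref{4-4} for $D_{\rm tr}'$ and $\tilde D_{\rm tr}'$, by choosing a \emph{particular} transversely symplectic frame of $\mathcal F$ that is built out of the given unitary basic frame $\{e_1,\dots,e_{2n}\}$. Concretely, I would set $v_i := e_i$ and $w_i := e_{n+i}$ for $i=1,\dots,n$. Using the compatibility relation $g_Q(s,t)=\omega_Q(s,Jt)$ together with $J^2=-I$ and $\bar e_{n+i}=J\bar e_i$, the unitary condition $g_Q(\bar e_i,\bar e_j)=\delta_{ij}$ translates into
\[
\omega_Q(\bar v_i,\bar w_j)=\omega_Q(\bar e_i,J\bar e_j)=g_Q(\bar e_i,\bar e_j)=\delta_{ij},\qquad \omega_Q(\bar v_i,\bar v_j)=\omega_Q(\bar w_i,\bar w_j)=0,
\]
so $\{v_i,w_i\}$ is indeed a transversely symplectic frame of $\mathcal F$, and the formulas \eqref{4-3}, \eqref{4-4} are applicable.

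Substituting into \eqref{4-3}, one obtains
\[
D_{\rm tr}'\varphi=\sum_{i=1}^n\{\bar e_i\cdot\nabla_{e_{n+i}}\varphi-\bar e_{n+i}\cdot\nabla_{e_i}\varphi\}.
\]
Then using $\bar e_{n+i}=J\bar e_i$ in the second term and $\bar e_i=-J\bar e_{n+i}$ in the first term, the right-hand side regroups as $-\sum_{i=1}^n J\bar e_i\cdot\nabla_{e_i}\varphi-\sum_{i=1}^n J\bar e_{n+i}\cdot\nabla_{e_{n+i}}\varphi=-\sum_{i=1}^{2n} J\bar e_i\cdot\nabla_{e_i}\varphi$, which is the first claimed formula.

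For the second operator, substituting into \eqref{4-4} yields
\[
\tilde D_{\rm tr}'\varphi=\sum_{i=1}^n\{J\bar e_i\cdot\nabla_{e_{n+i}}\varphi-J\bar e_{n+i}\cdot\nabla_{e_i}\varphi\},
\]
and the same substitutions $J\bar e_i=\bar e_{n+i}$ and $J\bar e_{n+i}=-\bar e_i$ convert this directly into $\sum_{i=1}^{2n}\bar e_i\cdot\nabla_{e_i}\varphi$.

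The computation is essentially bookkeeping, so there is no real obstacle; the only conceptual point worth flagging is that the expressions \eqref{4-3} and \eqref{4-4} are independent of the choice of transversely symplectic frame (this is implicit in the construction, since $D_{\rm tr}'$ and $\tilde D_{\rm tr}'$ were defined invariantly via $\mu_Q\circ\nabla_{\rm tr}$ before being written in a frame), so one is free to evaluate them in the convenient frame $v_i=e_i,\ w_i=Je_i$ built from the unitary basic frame.
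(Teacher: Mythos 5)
Your proof is correct and follows the same route the paper takes: the paper's one-line proof (``From \eqref{4-3} and \eqref{4-4}, the proof follows'') is precisely the specialization to the adapted transversely symplectic frame $v_i = e_i$, $w_i = e_{n+i} = Je_i$, which you carry out in detail and correctly justify (including the verification that this choice is symplectic and the frame-independence remark).
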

\begin{proof} From (\ref{4-3}) and (\ref{4-4}), the proof follows.
\end{proof}
Let $\nabla_{\rm tr}^*:\Gamma (Q^*\otimes Sp(\mathcal 
F))\to \Gamma Sp(\mathcal F)$ be the formal adjoint operator of the spinor derivative $\nabla_{\rm tr}$. 
\begin{lem} Let $M$ be  a closed manifold.  Then for any $\Psi\in\Gamma(Q^* \otimes Sp(\mathcal F))$, 
\begin{align*}
\nabla_{\rm tr}^*\Psi=-{\rm tr}_Q(\nabla_{\rm tr}\Psi)+\Psi(J(\bar\kappa^\sharp+\tau_\nabla)).
\end{align*}
\end{lem}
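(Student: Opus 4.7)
The plan is to prove the formula by directly computing the formal adjoint pairing
$$\int_M\langle\Psi,\nabla_{\rm tr}\varphi\rangle\mu_M\quad(\varphi\in\Gamma Sp(\mathcal F))$$
through integration by parts in a unitary basic frame, and converting the resulting divergence via the transversal divergence theorem (Theorem 2.3).

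Fix a local unitary basic frame $\{e_j\}_{j=1}^{2n}$ with $\bar e_{n+i}=J\bar e_i$, so that $\nabla_{\rm tr}\varphi=\sum_j\bar e_j^*\otimes\nabla_{e_j}\varphi$ and $\langle\Psi,\nabla_{\rm tr}\varphi\rangle=\sum_j\langle\Psi(\bar e_j),\nabla_{e_j}\varphi\rangle$. Applying the derivation property (3.8) of $\nabla$ with respect to $\langle\cdot,\cdot\rangle$ and expanding $\nabla_{e_j}(\Psi(\bar e_j))=(\nabla_{e_j}\Psi)(\bar e_j)+\Psi(\nabla_{e_j}\bar e_j)$ yields
$$\langle\Psi,\nabla_{\rm tr}\varphi\rangle=\sum_j e_j\langle\Psi(\bar e_j),\varphi\rangle-\langle{\rm tr}_Q(\nabla_{\rm tr}\Psi),\varphi\rangle-\sum_j\langle\Psi(\nabla_{e_j}\bar e_j),\varphi\rangle.$$

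To absorb the extraneous terms, introduce the section $s\in\Gamma(Q\otimes_{\mathbb R}\mathbb C)$ defined by $g_Q(s,u)=\langle\Psi(u),\varphi\rangle$ for all $u\in\Gamma Q$ (this is $C^\infty(M)$-linear in $u$, hence well-defined by nondegeneracy of $g_Q$). Since $\nabla g_Q=0$,
$$\sum_je_j\langle\Psi(\bar e_j),\varphi\rangle=\sum_j\bigl[g_Q(\nabla_{e_j}s,\bar e_j)+\langle\Psi(\nabla_{e_j}\bar e_j),\varphi\rangle\bigr].$$
A direct check using $\omega_Q(x,y)=-g_Q(x,Jy)$ together with $\bar e_{n+i}=J\bar e_i$ identifies $\sum_jg_Q(\nabla_{e_j}s,\bar e_j)$ with the transversal divergence ${\rm div}_\nabla(s)$ from (2.2). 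The $\Psi(\nabla_{e_j}\bar e_j)$ contributions then cancel, leaving the pointwise identity
$$\langle\Psi,\nabla_{\rm tr}\varphi\rangle=-\langle{\rm tr}_Q(\nabla_{\rm tr}\Psi),\varphi\rangle+{\rm div}_\nabla(s).$$

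Integrating over $M$ and applying Theorem 2.3, extended $\mathbb C$-linearly by splitting $s$ into real and imaginary parts, gives $\int_M{\rm div}_\nabla(s)\mu_M=\int_M\omega_Q(\bar\kappa^\sharp+\tau_\nabla,s)\mu_M$. The $J$-invariance $\omega_Q(JX,JY)=\omega_Q(X,Y)$ together with the definition of $s$ yields $\omega_Q(X,s)=g_Q(JX,s)=g_Q(s,JX)=\langle\Psi(JX),\varphi\rangle$; specialising to $X=\bar\kappa^\sharp+\tau_\nabla$ reproduces exactly the $\Psi(J(\bar\kappa^\sharp+\tau_\nabla))$ term. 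Since $\varphi$ is arbitrary, the stated formula for $\nabla_{\rm tr}^*\Psi$ follows. The main obstacle is largely bookkeeping: orchestrating the cancellation of the $\Psi(\nabla_{e_j}\bar e_j)$ contributions by means of the auxiliary section $s$, and handling its complex values (the Hermitian pairing makes $s$ genuinely complex, but both $g_Q$ and Theorem 2.3 extend $\mathbb C$-linearly, so this is harmless).
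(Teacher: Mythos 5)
Your proposal is correct and follows essentially the same route as the paper: expand $\langle\Psi,\nabla_{\rm tr}\varphi\rangle$ in a unitary basic frame via the metric compatibility (3.8), introduce the auxiliary section $s$ determined by $g_Q(s,\cdot)=\langle\Psi(\cdot),\varphi\rangle$ so that the frame-derivative terms collapse to ${\rm div}_\nabla(s)$, and convert via the transversal divergence theorem and the identity $\omega_Q(X,s)=g_Q(JX,s)$. The paper does exactly this (modulo the harmless swap of slots in the Hermitian pairing), so the two arguments coincide.
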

\begin{proof} Note that for any $\Phi,\Psi\in\Gamma(Q^*\otimes Sp(\mathcal F))$,
\begin{align*}
<\Phi,\Psi>=\sum_{i=1}^{2n}<\Phi(\bar e_i),\Psi(\bar e_i)>.
\end{align*}
 Then  for any $\varphi\in \Gamma Sp(\mathcal F)$,
\begin{align*}
<\nabla_{\rm tr}\varphi,\Psi>&=\sum_{i=1}^{2n}<\nabla_{e_i}\varphi,\Psi(\bar e_i)>\\
&=\sum_{i=1}^{2n}\{ e_i<\varphi,\Psi(\bar e_i)> -<\varphi,\nabla_{e_i}\Psi(\bar e_i)>\}.
\end{align*}
If we choose $s\in \Gamma Q$ such that $ g_Q(s,t) =<\varphi, \Psi(t)>$ for any $t\in \Gamma Q$, then 
\begin{equation*}
{\rm div}_\nabla (s)=\sum_{i=1}^{2n} \{e_i< \varphi,\Psi(\bar e_i)> +< \varphi,{\rm div}_\nabla(\bar e_i)\Psi(\bar e_i)>\}.
\end{equation*}
Hence 
\begin{equation}\label{5-1}
<\nabla_{\rm tr}\varphi,\Psi> ={\rm div}_\nabla (s)-\sum_{i=1}^{2n}<\varphi, \nabla_{e_i}\Psi(\bar e_i) +{\rm div}_\nabla (\bar e_i) \Psi(\bar e_i)>.
\end{equation}
On the other hand, by the divergence theorem (Theorem 2.3), we have
\begin{align*}
\int_M {\rm div}_\nabla (s)=\int_M \omega_Q(\bar\kappa^\sharp+\tau_\nabla,s) =\int_M g_Q(J(\bar\kappa^\sharp+\tau_\nabla),s)=\int_M <\varphi,\Psi(J(\bar\kappa^\sharp+\tau_\nabla))>.
\end{align*}
Hence by integrating (\ref{5-1}), 
\begin{align*}
\int_M <\varphi, \nabla_{\rm tr}^*\Psi>&=\int_M<\nabla_{\rm tr}\varphi,\Psi>\\
&=\int_M <\varphi,\Psi(J(\bar\kappa^\sharp+\tau_\nabla))>-\sum_{i=1}^{2n}\int_M<\varphi,\nabla_{e_i}\Psi(\bar e_i) +{\rm div}_\nabla (\bar e_i) \Psi(\bar e_i)>\\
&=\int_M<\varphi,\Psi(J(\bar\kappa^\sharp+\tau_\nabla))> -\int_M <\varphi, {\rm tr}_Q (\nabla_{\rm tr}\Psi)> ,
\end{align*}
which completes the proof.
\end{proof}
From Lemma 5.2, we have the following.
\begin{prop} For any spinor field $\varphi\in\Gamma Sp(\mathcal F)$, we have
\begin{align*}
\nabla_{\rm tr}^*\nabla_{\rm tr}\varphi=-\sum_{i=1}^{2n}\{\nabla_{e_i}\nabla_{e_i}\varphi+{\rm div}_\nabla(\bar e_i)\nabla_{e_i}\varphi \}+\nabla_{J(\kappa^\sharp+\tau_\nabla)}\varphi.
\end{align*}
\end{prop}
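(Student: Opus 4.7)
The proposal is to derive Proposition 5.3 as a direct consequence of Lemma 5.2 by specializing $\Psi$ to be the spinor derivative $\nabla_{\rm tr}\varphi$ itself, and then unpacking the trace using the unitary basic frame $\{e_1,\dots,e_{2n}\}$.

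First, I would set $\Psi := \nabla_{\rm tr}\varphi \in \Gamma(Q^*\otimes Sp(\mathcal F))$, so that $\Psi(\bar e_i) = \nabla_{e_i}\varphi$ for each $i$. Applying Lemma 5.2 to this $\Psi$ yields
\begin{equation*}
\nabla_{\rm tr}^*\nabla_{\rm tr}\varphi \;=\; -{\rm tr}_Q(\nabla_{\rm tr}\nabla_{\rm tr}\varphi)\;+\;(\nabla_{\rm tr}\varphi)\bigl(J(\bar\kappa^\sharp+\tau_\nabla)\bigr),
\end{equation*}
and the second term is immediately identified, by the very definition of $\nabla_{\rm tr}$ and the fact that $J(\bar\kappa^\sharp+\tau_\nabla)$ is a section of $Q$, with $\nabla_{J(\kappa^\sharp+\tau_\nabla)}\varphi$. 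This accounts for the lone term in the proposition.

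Next I would expand the trace. Reading off the proof of Lemma 5.2, in particular equation (5.1), the operative identity is
\begin{equation*}
{\rm tr}_Q(\nabla_{\rm tr}\Psi) \;=\; \sum_{i=1}^{2n}\bigl\{\nabla_{e_i}\Psi(\bar e_i)\;+\;{\rm div}_\nabla(\bar e_i)\,\Psi(\bar e_i)\bigr\},
\end{equation*}
which arises naturally from computing $\text{div}_\nabla(s)$ for the auxiliary section $s$ dual (via $g_Q$) to the pairing $t\mapsto \langle \varphi,\Psi(t)\rangle$; the $\text{div}_\nabla(\bar e_i)$ terms encode how the unitary basic frame fails to be parallel. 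Substituting $\Psi(\bar e_i) = \nabla_{e_i}\varphi$ gives
\begin{equation*}
{\rm tr}_Q(\nabla_{\rm tr}\nabla_{\rm tr}\varphi)\;=\;\sum_{i=1}^{2n}\bigl\{\nabla_{e_i}\nabla_{e_i}\varphi\;+\;{\rm div}_\nabla(\bar e_i)\,\nabla_{e_i}\varphi\bigr\}.
\end{equation*}
Plugging this back into the expression from Lemma 5.2 and collecting signs produces exactly the asserted formula.

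Since the core analytic work (the integration by parts producing the $J(\bar\kappa^\sharp+\tau_\nabla)$ correction and the frame-divergence terms) has been absorbed into Lemma 5.2, nothing genuinely difficult remains. The only thing to watch is the bookkeeping for the trace: one must be sure to use the precise form of the trace dictated by Lemma 5.2's proof rather than a naive parallel-frame trace, since $\{\bar e_i\}$ is not assumed to be $\nabla$-parallel at a point and the divergence terms ${\rm div}_\nabla(\bar e_i)$ do not vanish. This is the only conceptual subtlety, and it is already handled by the auxiliary section argument in Lemma 5.2.
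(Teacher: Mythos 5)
Your proposal is correct and is precisely the paper's argument: the paper simply states ``From Lemma 5.2, we have the following,'' and you have spelled out exactly what that means, taking $\Psi=\nabla_{\rm tr}\varphi$, reading off the explicit form of ${\rm tr}_Q$ from equation (5.1) in the proof of Lemma 5.2, and identifying $(\nabla_{\rm tr}\varphi)(J(\bar\kappa^\sharp+\tau_\nabla))$ with $\nabla_{J(\bar\kappa^\sharp+\tau_\nabla)}\varphi$. Your cautionary remark about the non-parallel frame and the ${\rm div}_\nabla(\bar e_i)$ terms is exactly the right thing to keep in mind, and it is already built into Lemma 5.2 as you note.
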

Now, we put that for any $s \in \Gamma Q$,
\begin{equation}\label{5-2}
P(s)=\sum_{i=1}^{2n}\bar e_i\cdot\nabla_{Je_i}s,\quad
\tilde P(s)=\sum_{i=1}^{2n}\bar e_i\cdot\nabla_{e_i}s.
\end{equation}
By a direct calculation, we have the following lemmas.
\begin{lem} For any $s \in\Gamma Q$ and $\varphi\in \Gamma Sp(\mathcal F)$, we have
\begin{align}
D_{\rm tr}(s\cdot\varphi)&= s\cdot D_{\rm tr}\varphi +P(s)\cdot \varphi -\sqrt{-1}\nabla_{s}\varphi -\frac{\sqrt{-1}}2\omega_Q(s,\bar\kappa^\sharp+\tau_\nabla)\varphi,\label{5-3}\\
\tilde D_{\rm tr}(s\cdot\varphi)&=s\cdot \tilde D_{\rm tr}\varphi+\tilde P(s)\cdot\varphi+\sqrt{-1}\nabla_{Js}\varphi +\frac{\sqrt{-1}}2\omega_Q(Js,\bar\kappa^\sharp+\tau_\nabla)\varphi.\label{5-4}
\end{align}
\end{lem}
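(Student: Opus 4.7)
The plan is to expand $D_{\rm tr}(s\cdot\varphi)$ and $\tilde D_{\rm tr}(s\cdot\varphi)$ by combining Lemma 5.1 with the Leibniz rule (\ref{3-7}) and the symplectic Clifford relation (\ref{3-4}); the only nontrivial combinatorial ingredient is a reindexing based on the identity $\bar e_{n+i}=J\bar e_i$. I would treat (\ref{5-3}) first; (\ref{5-4}) then follows the same template, but with the Clifford factor $\bar e_i$ in place of $J\bar e_i$.

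First, using Lemma 5.1 and (\ref{3-7}),
\begin{equation*}
D_{\rm tr}'(s\cdot\varphi)=-\sum_{i=1}^{2n}J\bar e_i\cdot(\nabla_{e_i}s)\cdot\varphi-\sum_{i=1}^{2n}J\bar e_i\cdot s\cdot\nabla_{e_i}\varphi.
\end{equation*}
In the second sum I would commute $s$ past $J\bar e_i$ by (\ref{3-4}), which produces $s\cdot D_{\rm tr}'\varphi$ together with the scalar correction $\sqrt{-1}\sum_i\omega_Q(J\bar e_i,s)\nabla_{e_i}\varphi$. The compatibility $g_Q(\cdot,\cdot)=\omega_Q(\cdot,J\cdot)$ combined with $J$-invariance of $g_Q$ gives $\omega_Q(J\bar e_i,s)=-g_Q(\bar e_i,s)$, so this correction collapses to $-\sqrt{-1}\nabla_s\varphi$ once one sets $\nabla_s\varphi:=\sum_i g_Q(s,\bar e_i)\nabla_{e_i}\varphi$ (i.e., differentiates along any basic lift of $s$).

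For the first sum, the key step is the reindexing: with the natural convention $Je_i=e_{n+i}$, $Je_{n+i}=-e_i$ for $i=1,\dots,n$, a direct term-by-term comparison using $\bar e_{n+i}=J\bar e_i$ and $J^2=-I$ gives
\begin{equation*}
-\sum_{i=1}^{2n}J\bar e_i\cdot\nabla_{e_i}s=\sum_{i=1}^{2n}\bar e_i\cdot\nabla_{Je_i}s=P(s).
\end{equation*}
Combining, $D_{\rm tr}'(s\cdot\varphi)=s\cdot D_{\rm tr}'\varphi+P(s)\cdot\varphi-\sqrt{-1}\nabla_s\varphi$. Passing from $D_{\rm tr}'$ to $D_{\rm tr}$ via (\ref{4-7}) contributes the commutator $-\tfrac12[(\bar\kappa^\sharp+\tau_\nabla)\cdot s-s\cdot(\bar\kappa^\sharp+\tau_\nabla)]\cdot\varphi$, and one more application of (\ref{3-4}) converts this into $-\tfrac{\sqrt{-1}}{2}\omega_Q(s,\bar\kappa^\sharp+\tau_\nabla)\varphi$, yielding (\ref{5-3}).

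Equation (\ref{5-4}) follows by the same scheme with $\tilde D_{\rm tr}'\varphi=\sum_i\bar e_i\cdot\nabla_{e_i}\varphi$: the reindexing is trivial and the $(\nabla_{e_i}s)$-sum is directly $\tilde P(s)\cdot\varphi$, commuting $s$ across $\bar e_i$ gives the correction $-\sqrt{-1}\sum_i\omega_Q(\bar e_i,s)\nabla_{e_i}\varphi=\sqrt{-1}\nabla_{Js}\varphi$ via $\omega_Q(\bar e_i,s)=-g_Q(\bar e_i,Js)$, and the transition $\tilde D_{\rm tr}'\to\tilde D_{\rm tr}$ from (\ref{4-10}) contributes $+\tfrac{\sqrt{-1}}{2}\omega_Q(Js,\bar\kappa^\sharp+\tau_\nabla)\varphi$ using $\omega_Q(Js,t)=-g_Q(s,t)$. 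The main obstacle throughout is consistent sign bookkeeping: the reindexing identity $-\sum_i J\bar e_i\cdot\nabla_{e_i}s=P(s)$ together with the repeated conversions between $\omega_Q$, $g_Q$ and $J$ via the compatibility relations have to be tracked carefully.
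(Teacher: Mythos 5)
Your proposal is correct and follows essentially the same route as the paper: expand $D_{\rm tr}'$ and $\tilde D_{\rm tr}'$ via Lemma 5.1 and the Leibniz rule (\ref{3-7}), commute $s$ across $J\bar e_i$ (resp.\ $\bar e_i$) with (\ref{3-4}) to peel off $s\cdot D_{\rm tr}'\varphi$ plus the $\nabla_s\varphi$ correction, reindex to identify $P(s)$ (resp.\ $\tilde P(s)$), and convert the $\bar\kappa^\sharp+\tau_\nabla$ commutator with another application of (\ref{3-4}). The only cosmetic difference is that in the last step for (\ref{5-4}) the sign is cleanest via the $J$-compatibility $\omega_Q(s,Jt)=-\omega_Q(Js,t)$ rather than the $g_Q$ identity you cite, but the result and bookkeeping are all correct.
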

\begin{proof}  
From (\ref{4-7}),  (\ref{4-10}) and Lemma 5.1, we have
\begin{align*}
D_{\rm tr}(s\cdot\varphi)=&-\sum_{i=1}^{2n} J\bar e_i\cdot \nabla_{e_i}(s\cdot\varphi)-\frac12(\bar\kappa^\sharp+\tau_\nabla)\cdot s\cdot\varphi\\
=&-\sum_{i=1}^{2n}J\bar e_i\cdot\nabla_{e_i}s\cdot\varphi -\sum_{i=1}^{2n}s\cdot J\bar e_i\cdot\nabla_{e_i}\varphi +\sqrt{-1}\sum_{i=1}^{2n}\omega_Q(J\bar e_i,s)\nabla_{e_i}\varphi\\
&  -\frac12(\bar\kappa^\sharp+\tau_\nabla)\cdot s \cdot\varphi\\
=& P(s)\cdot\varphi + s\cdot D_{\rm tr}\varphi -\sqrt{-1} \nabla_{s} \varphi +\frac12\{ s\cdot (\bar\kappa^\sharp +\tau_\nabla) - (\bar\kappa^\sharp+\tau_\nabla)\cdot s\}\cdot\varphi,
\end{align*}
which proves (\ref{5-3}).  Similarly, (\ref{5-4}) is proved.  
\end{proof}

\begin{lem}  (cf. \cite[Lemma 5.2.5]{Ha3}) For any $s\in \Gamma Q$, we  have
\begin{align*}
P(s)+\tilde P(Js)=&-P(J)(Js)-\sqrt{-1} {\rm div}_\nabla(s)\\
&+\sum_{i,j=1}^{2n}\{e_i\omega_Q(\bar e_j,Js)-e_j\omega_Q(\bar e_i,Js)\}\bar e_i\cdot J\bar e_j\\
&-\sum_{i,j=1}^{2n}\omega_Q(T_\nabla(e_i,e_j)+\pi[e_i,e_j],Js)\bar e_i\cdot J\bar e_j,
\end{align*}
where $P(J)(s)=\sum_{i=1}^{2n}(\nabla_{Je_i}J)(s)\cdot \bar e_i$.
\end{lem}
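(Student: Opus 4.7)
The plan is to verify the identity by a direct computation, organized to isolate the four contributions on the right-hand side one at a time. First, I observe that by re-indexing the sum $P(s) = \sum_{i=1}^{2n}\bar e_i\cdot\nabla_{Je_i}s$ via the pairing $Je_i\leftrightarrow e_{n+i}$ arising from the basic frame structure $\bar e_{n+i}=J\bar e_i$, one obtains the alternative form $P(s)=-\sum_{i=1}^{2n} J\bar e_i\cdot\nabla_{e_i}s$ (this is the $P$-analogue of Lemma~5.1). Combining this with $\tilde P(Js)=\sum_i\bar e_i\cdot\nabla_{e_i}(Js)$ and the Leibniz rule $\nabla_{e_i}(Js)=(\nabla_{e_i}J)s+J\nabla_{e_i}s$ gives the compact identity
\begin{equation*}
P(s)+\tilde P(Js)=\sum_i\bar e_i\cdot(\nabla_{e_i}J)s+\sum_i\bigl[\bar e_i\cdot J\nabla_{e_i}s-J\bar e_i\cdot\nabla_{e_i}s\bigr],
\end{equation*}
which cleanly separates the $(\nabla J)$-linear piece from the rest.

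Next, I apply the symplectic Clifford relation $\bar e_i\cdot J\nabla_{e_i}s = J\nabla_{e_i}s\cdot\bar e_i-\sqrt{-1}\,g_Q(\bar e_i,\nabla_{e_i}s)$ and sum over $i$ to extract $-\sqrt{-1}\,\mathrm{div}_\nabla(s)$. I then expand $\nabla_{e_i}s$ in the basic frame via
\begin{equation*}
g_Q(\bar e_l,\nabla_{e_i}s)=e_i\omega_Q(\bar e_l,Js)-\omega_Q(\nabla_{e_i}\bar e_l,Js)-\omega_Q(\bar e_l,(\nabla_{e_i}J)s),
\end{equation*}
which follows from $\nabla\omega_Q=0$ together with $J\nabla_{e_i}s=\nabla_{e_i}(Js)-(\nabla_{e_i}J)s$. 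Substituting and normalizing every Clifford product to the canonical form $\bar e_i\cdot J\bar e_j$ using $J\bar e_i\cdot\bar e_l=\bar e_l\cdot J\bar e_i+\sqrt{-1}\delta_{il}$, the three pieces respectively produce: the anti-symmetrized bracket term $\{e_i\omega_Q(\bar e_j,Js)-e_j\omega_Q(\bar e_i,Js)\}\bar e_i\cdot J\bar e_j$ after relabeling $i\leftrightarrow l$; the torsion contribution $-\omega_Q(T_\nabla(e_i,e_j)+\pi[e_i,e_j],Js)\bar e_i\cdot J\bar e_j$ via $\nabla_{e_i}\bar e_j-\nabla_{e_j}\bar e_i=T_\nabla(e_i,e_j)+\pi[e_i,e_j]$; and a $(\nabla J)$-remainder held in reserve for the last step.

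Finally, this $(\nabla J)$-remainder combined with $\sum_i\bar e_i\cdot(\nabla_{e_i}J)s$ from the first step collapses to $-P(J)(Js)$ via the key identity
\begin{equation*}
\sum_l\omega_Q(\bar e_l,v)\bar e_l=-Jv\qquad(v\in\Gamma Q),
\end{equation*}
a one-line consequence of $\omega_Q(\cdot,\cdot)=g_Q(J\cdot,\cdot)$ and the $g_Q$-orthonormality of $\{\bar e_l\}$. Applying it to $v=(\nabla_{e_i}J)s$ and using the anti-commutation $J(\nabla_{e_i}J)=-(\nabla_{e_i}J)J$ (from differentiating $J^2=-I$) produces $\sum_i(\nabla_{e_i}J)(Js)\cdot J\bar e_i$, which equals $-P(J)(Js)$ by the re-indexing dual to the one in the first step. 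The main obstacle is bookkeeping: the scalar-type remainders generated by successive Clifford commutations must be tracked precisely so that each cancels against its intended counterpart, particularly in the final step where the two $(\nabla J)$-contributions meet and a delicate anti-commutation with $J$ is needed to close the identity.
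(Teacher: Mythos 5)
Your proof is correct, and since the paper merely cites \cite[Lemma 5.2.5]{Ha3} without giving its own argument, your computation fills that gap in essentially the expected way: re-indexing $P$ via $Je_i\leftrightarrow e_{n+i}$, Leibniz on $\nabla_{e_i}(Js)$, and then systematic use of the commutation relation $\bar e_i\cdot t-t\cdot\bar e_i=-\sqrt{-1}\,\omega_Q(\bar e_i,t)$ together with $\nabla\omega_Q=0$ and $\nabla_{e_i}\bar e_j-\nabla_{e_j}\bar e_i=T_\nabla(e_i,e_j)+\pi[e_i,e_j]$. I checked the steps I was initially suspicious of — the extraction of $-\sqrt{-1}\,\mathrm{div}_\nabla(s)$, the antisymmetrization producing the bracket and torsion terms, and the final collapse of the two $(\nabla J)$-contributions to $-P(J)(Js)$ via $\sum_l\omega_Q(\bar e_l,v)\bar e_l=-Jv$ and $J(\nabla_X J)=-(\nabla_X J)J$ — and they all hold; in particular your adaptation correctly retains the torsion term, which is absent in Habermann's torsion-free setting.
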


\begin{thm} $($Weitzenb\"ock formula$)$  On a transversely symplectic foliation  $(M,\mathcal F,\omega,\nabla)$,  we have the following Weitzenb\"ock formula; for any $\varphi \in \Gamma Sp(\mathcal F)$
\begin{align*}
\mathcal P_{\rm tr}\varphi=& \nabla_{\rm tr}^*\nabla_{\rm tr}\varphi+\sqrt{-1}F(\varphi)-\frac14|\bar\kappa^\sharp+\tau_\nabla|^2\varphi+{\sqrt{-1}\over 2}\{P(J(\bar\kappa^\sharp+\tau_\nabla))-\tilde P(\bar\kappa^\sharp+\tau_\nabla)\}\cdot\varphi\\
&+\sqrt{-1}\sum_{i=1}^{2n}P(J)(J\bar e_i)\cdot\nabla_{e_i}\varphi+\sqrt{-1}\sum_{i,j=1}^{2n}\bar e_i\cdot J\bar e_j\cdot\nabla_{T_\nabla(e_i,e_j)}\varphi,
\end{align*}
where $F(\varphi)=\sum_{i,j=1}^{2n}J\bar e_i\cdot\bar  e_j\cdot R^S(e_i,e_j)\varphi$.
\end{thm}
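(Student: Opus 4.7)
The plan is to compute $\mathcal P_{\rm tr}\varphi = \sqrt{-1}(\tilde D_{\rm tr}D_{\rm tr} - D_{\rm tr}\tilde D_{\rm tr})\varphi$ directly from the local formulas
\[
D_{\rm tr}\varphi = -\sum_{i=1}^{2n} J\bar e_i\cdot\nabla_{e_i}\varphi - \tfrac12\Theta\cdot\varphi, \qquad \tilde D_{\rm tr}\varphi = \sum_{i=1}^{2n} \bar e_i\cdot\nabla_{e_i}\varphi - \tfrac12 J\Theta\cdot\varphi,
\]
where I abbreviate $\Theta := \bar\kappa^\sharp+\tau_\nabla$, and to group the resulting terms by their order in $\nabla$. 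The main tools are Lemma~5.4, governing how $D_{\rm tr}$ and $\tilde D_{\rm tr}$ interact with symplectic Clifford multiplication; Lemma~5.5, which rewrites $P(s)+\tilde P(Js)$ in terms of $P(J)$, divergence and torsion; the Clifford identity $\bar e_j\cdot J\bar e_i - J\bar e_i\cdot\bar e_j = \sqrt{-1}\,g_Q(\bar e_j,\bar e_i)$ coming from (\ref{3-4}); and Proposition~5.3, which assembles the rough Laplacian as $\nabla^*_{\rm tr}\nabla_{\rm tr}\varphi$.

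For the second-order terms, the principal part of $\tilde D_{\rm tr}D_{\rm tr}\varphi - D_{\rm tr}\tilde D_{\rm tr}\varphi$ has the shape $\sum_{i,j}(-\bar e_j\cdot J\bar e_i + J\bar e_j\cdot\bar e_i)\nabla_{e_j}\nabla_{e_i}\varphi$. Its $i\leftrightarrow j$ symmetric part collapses via the Clifford identity to $-\sqrt{-1}\sum_i\nabla_{e_i}\nabla_{e_i}\varphi$, while its antisymmetric part produces the curvature commutator $[\nabla_{e_i},\nabla_{e_j}]\varphi = R^\nabla(e_i,e_j)\varphi + \nabla_{[e_i,e_j]}\varphi$. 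Applying the spinor curvature formula (\ref{3-9}) and multiplying by the outer $\sqrt{-1}$ converts the $R^\nabla$ piece into $\sqrt{-1}F(\varphi)$, while the expansion $[e_i,e_j] = \nabla_{e_i}e_j - \nabla_{e_j}e_i - T_\nabla(e_i,e_j)$ isolates the torsion summand $\sqrt{-1}\sum_{i,j}\bar e_i\cdot J\bar e_j\cdot\nabla_{T_\nabla(e_i,e_j)}\varphi$ together with a frame-derivative piece that is deferred to the first-order collection below.

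The first-order contributions arise from three sources: (i) derivatives falling on the frame inside the principal parts $\nabla_{e_j}(\bar e_i\cdot\nabla_{e_i}\varphi)$ and $\nabla_{e_j}(J\bar e_i\cdot\nabla_{e_i}\varphi)$; (ii) the absorbed $\nabla_{\nabla_{e_i}e_j - \nabla_{e_j}e_i}\varphi$ piece from Step 2; and (iii) the Leibniz corrections $\mp\sqrt{-1}\nabla_s\varphi$, $\pm\sqrt{-1}\nabla_{Js}\varphi$ appearing in (\ref{5-3})--(\ref{5-4}) when $\tilde D_{\rm tr}$ and $D_{\rm tr}$ act on the $\frac12\Theta$-corrections; source (iii) contributes exactly $-\sqrt{-1}\nabla_{J\Theta}\varphi$. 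The combination from (i)--(ii) takes the form $\sum_i(P(\bar e_i)+\tilde P(J\bar e_i))\cdot\nabla_{e_i}\varphi$, to which Lemma~5.5 with $s=\bar e_i$ applies; it reduces to $-\sum_i P(J)(J\bar e_i)\cdot\nabla_{e_i}\varphi - \sqrt{-1}\sum_i\mathrm{div}_\nabla(\bar e_i)\nabla_{e_i}\varphi$ together with frame-symmetric residuals that cancel under the antisymmetrization built into the commutator. After multiplying by the outer $\sqrt{-1}$, the divergence piece, the $-\sum_i\nabla_{e_i}\nabla_{e_i}\varphi$ from Step 2, and the $\nabla_{J\Theta}\varphi$ from (iii) form precisely the combination which, by Proposition~5.3, reassembles into $\nabla^*_{\rm tr}\nabla_{\rm tr}\varphi$; what remains is the first-order summand $\sqrt{-1}\sum_i P(J)(J\bar e_i)\cdot\nabla_{e_i}\varphi$.

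At zeroth order, the commutator of the two correction pieces $\frac12\Theta\cdot$ and $\frac12 J\Theta\cdot$, together with the $\omega_Q(s,\Theta)\varphi$ summands of Lemma~5.4 applied to these corrections, collapses through $\omega_Q(J\Theta,\Theta) = -g_Q(\Theta,\Theta)$ to $-\tfrac14|\Theta|^2\varphi$ after the outer $\sqrt{-1}$ factor is included. The cross terms between the principal part of one Dirac operator and the $\frac12\Theta$-correction of the other contribute, through the $P(s)\cdot\varphi$ and $\tilde P(s)\cdot\varphi$ summands of Lemma~5.4 with $s=J\Theta$ and $s=\Theta$, the final piece $\frac{\sqrt{-1}}{2}(P(J\Theta) - \tilde P(\Theta))\cdot\varphi$. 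The principal obstacle will be the bookkeeping of the first-order step: one must carefully verify that the frame-symmetric residuals in Lemma~5.5 really vanish under the outer commutator antisymmetrization, that the residual $\nabla_{J\Theta}\varphi$ term produced inside $\nabla^*_{\rm tr}\nabla_{\rm tr}\varphi$ matches exactly the Leibniz-generated $\nabla_{J\Theta}\varphi$ from source (iii), and that the torsion contributions coming from Lemma~5.5 merge cleanly with those from $[e_i,e_j]$ in Step 2 into a single summand with no leftover.
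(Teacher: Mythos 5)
Your proposal reconstructs the paper's argument with the same machinery: expand the commutator via Lemma~5.4, collect the frame-derivative and bracket terms via Lemma~5.5, and reassemble the rough Laplacian via Proposition~5.3; the grouping by order in $\nabla$ is only a cosmetic reorganization of the paper's computation of $D_{\rm tr}\tilde D_{\rm tr}$ and $\tilde D_{\rm tr}D_{\rm tr}$ separately before subtracting. One small correction: from (\ref{3-4}) one has $\bar e_j\cdot J\bar e_i - J\bar e_i\cdot\bar e_j = -\sqrt{-1}\,\omega_Q(\bar e_j,J\bar e_i) = -\sqrt{-1}\,g_Q(\bar e_j,\bar e_i)$, so the symmetric part of the second-order piece collapses to $+\sqrt{-1}\sum_i\nabla_{e_i}\nabla_{e_i}\varphi$ (not $-\sqrt{-1}$); you silently use the correct sign in the final accounting where, after the outer $\sqrt{-1}$, this becomes $-\sum_i\nabla_{e_i}\nabla_{e_i}\varphi$ as Proposition~5.3 requires.
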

\begin{proof}
From Lemma 5.4, we have
\begin{align*}
D_{\rm tr}\tilde D_{\rm tr}\varphi=&\sum_{i=1}^{2n} \{\bar e_i\cdot D_{\rm tr}(\nabla_{e_i}\varphi)+P(\bar e_i)\cdot\nabla_{e_i}\varphi-\sqrt{-1}\nabla_{e_i}\nabla_{e_i}\varphi\}\\
&+\sqrt{-1}\nabla_{J(\bar\kappa^\sharp+\tau_\nabla)}\varphi-\frac12 J(\bar\kappa^\sharp+\tau_\nabla)\cdot D_{\rm tr}\varphi-\frac12P(J(\bar\kappa^\sharp+\tau_\nabla))\cdot\varphi-{\sqrt{-1}\over 4}|\bar\kappa^\sharp+\tau_\nabla|^2\varphi.
\end{align*}
Since
\begin{align}\label{6-8}
\sum_{i=1}^{2n} \bar e_i \cdot D_{\rm tr}(\nabla_{e_i}\varphi)=&-\sum_{i,j=1}^{2n}\bar e_i \cdot J\bar e_j \cdot\nabla_{e_j}\nabla_{e_i}\varphi-\frac12(\bar\kappa^\sharp+\tau_\nabla)\cdot \tilde D_{\rm tr}\varphi\\
&-\frac{\sqrt{-1}}2\nabla_{J(\bar\kappa^\sharp+\tau_\nabla)}\varphi-\frac14(\bar\kappa^\sharp+\tau_\nabla)\cdot J(\bar\kappa^\sharp+\tau_\nabla)\cdot\varphi,\notag
\end{align}
we have 
\begin{align*}
D_{\rm tr}\tilde D_{\rm tr}\varphi=&-\sqrt{-1}\sum_{i=1}^{2n} \nabla_{e_i}\nabla_{e_i}\varphi-\sum_{i,j=1}^{2n}\bar e_i\cdot J\bar e_j\cdot\nabla_{e_j}\nabla_{e_i}\varphi +\sum_{i=1}^{2n}P(\bar e_i)\cdot\nabla_{e_i}\varphi\\
&+{\sqrt{-1}\over 2}\nabla_{J(\kappa^\sharp+\tau_\nabla)}\varphi-\frac12 J(\bar\kappa^\sharp+\tau_\nabla)\cdot D_{\rm tr}\varphi-\frac12 (\bar\kappa^\sharp+\tau_\nabla)\cdot \tilde D_{\rm tr}\varphi\\
&-\frac12 P(J(\bar\kappa^\sharp+\tau_\nabla))\cdot\varphi-{\sqrt{-1}\over 4}|\bar\kappa^\sharp+\tau_\nabla|^2\varphi-\frac14 (\bar\kappa^\sharp+\tau_\nabla)\cdot J(\bar\kappa^\sharp+\tau_\nabla)\cdot\varphi.
\end{align*}
Similarly, we have
\begin{align*}
\tilde D_{\rm tr}D_{\rm tr}\varphi=& \sqrt{-1}\sum_{i=1}^{2n}\nabla_{e_i}\nabla_{e_i}\varphi-\sum_{i,j=1}^{2n}J\bar e_i\cdot \bar e_j\cdot\nabla_{e_j}\nabla_{e_i}\varphi -\sum_{i=1}^{2n}P(J\bar e_i)\cdot\nabla_{e_i}\varphi\\
&-{\sqrt{-1}\over 2}\nabla_{J(\bar\kappa^\sharp+\tau_\nabla)}\varphi-\frac12 J(\bar\kappa^\sharp+\tau_\nabla)\cdot D_{\rm tr}\varphi-\frac12 (\bar\kappa^\sharp+\tau_\nabla)\cdot \tilde D_{\rm tr}\varphi\\
&-\frac12 \tilde P(\bar\kappa^\sharp+\tau_\nabla)\cdot\varphi+{\sqrt{-1}\over 4}|\bar\kappa^\sharp+\tau_\nabla|^2\varphi
-\frac14 J(\bar\kappa^\sharp+\tau_\nabla)\cdot (\bar\kappa^\sharp+\tau_\nabla)\cdot \varphi.
\end{align*}
Therefore, we have
\begin{align}\label{5-6}
[\tilde D_{\rm tr},D_{\rm tr}]\varphi=& \sqrt{-1}\sum_{i=1}^{2n}\nabla_{e_i}\nabla_{e_i}\varphi-\sqrt{-1}\nabla_{J(\bar\kappa^\sharp+\tau_\nabla)}\varphi+\sum_{i,j=1}^{2n}J\bar e_i\cdot\bar e_j\cdot R^S(e_i,e_j)\varphi\notag\\
&+\sum_{i,j=1}^{2n}J\bar e_i\cdot\bar e_j\cdot\nabla_{[e_i,e_j]}\varphi-\sum_{i=1}^{2n}\{P(\bar e_i)+\tilde P(J\bar e_i)\}\cdot\nabla_{e_i}\varphi\notag\\ 
& +\frac12\{P(J(\bar\kappa^\sharp+\tau_\nabla))-\tilde P(\bar\kappa^\sharp+\tau_\nabla)\}\cdot\varphi+{\sqrt{-1}\over 4}|\bar\kappa^\sharp+\tau_\nabla|^2\varphi.
\end{align}
On the other hand, by Lemma 5.5, we have 
\begin{align}\label{5-7}
&\sum_{i=1}^{2n}\{P(\bar e_i)+\tilde P(J\bar e_i)\}\cdot\nabla_{e_i}\varphi -\sum_{i,j=1}^{2n}J\bar e_i\cdot \bar e_j\cdot\nabla_{[e_i,e_j]}\varphi  \notag\\
&=-\sum_{i=1}^{2n} P(J)(J\bar e_i)\cdot\nabla_{e_i}\varphi-\sqrt{-1}\sum_{i=1}^{2n} {\rm div}_\nabla(\bar e_i)\nabla_{e_i}\varphi-\sum_{i,j=1}^{2n}\bar e_i\cdot J\bar e_j\cdot\nabla_{T_\nabla(e_i,e_j)}\varphi.
\end{align}
From  Proposition 5.3, (\ref{5-6}) and  (\ref{5-7}), the proof follows. 
\end{proof}

\begin{cor}  Let  $\nabla$ be a transverse Fedosov connection on $(M,\mathcal F,\omega)$ with a transversely metaplectic structure.  Then for any $\varphi\in\Gamma Sp(\mathcal F)$,
\begin{align}\label{5-8}
\mathcal P_{\rm tr}\varphi=& \nabla_{\rm tr}^*\nabla_{\rm tr}\varphi+\sqrt{-1}F(\varphi)-\frac14|\bar\kappa^\sharp|^2\varphi+{\sqrt{-1}\over 2}\{P(J\bar\kappa^\sharp)-\tilde P(\bar\kappa^\sharp)\}\cdot\varphi\\
&+\sqrt{-1}\sum_{i=1}^{2n}P(J)(J\bar e_i)\cdot\nabla_{e_i}\varphi.\notag 
\end{align}
 In addition, if $\nabla J=0$, then
 \begin{align}\label{5-9}
 \mathcal P_{\rm tr}\varphi&= \nabla_{\rm tr}^*\nabla_{\rm tr}\varphi+\sqrt{-1}F(\varphi)-\frac14|\bar\kappa^\sharp|^2\varphi+{\sqrt{-1}\over 2}\{P(J\bar\kappa^\sharp)-\tilde P(\bar\kappa^\sharp)\}\cdot\varphi.
 \end{align}
 \end{cor}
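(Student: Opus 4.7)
The strategy is to read off both identities directly from the Weitzenböck formula in Theorem 5.6 by using the defining properties of a transverse Fedosov connection together with the definitions of $\tau_\nabla$ and $P(J)$. There is essentially no new computation required; the corollary is the specialization of the general formula to a setting where certain torsion and $J$-derivative terms vanish.

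First I would observe that, by the definition of transverse Fedosov connection, the torsion tensor satisfies $T_\nabla=0$. Consequently, from the definition
\[
\tau_\nabla=\sum_{i=1}^n T_\nabla(v_i,w_i),
\]
we immediately get $\tau_\nabla=0$ as well. Substituting $T_\nabla=0$ and $\tau_\nabla=0$ into the Weitzenböck formula of Theorem 5.6 kills the last term $\sqrt{-1}\sum_{i,j}\bar e_i\cdot J\bar e_j\cdot \nabla_{T_\nabla(e_i,e_j)}\varphi$ outright, and in the remaining terms replaces every occurrence of $\bar\kappa^\sharp+\tau_\nabla$ by $\bar\kappa^\sharp$. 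This yields (\ref{5-8}) verbatim.

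For the additional statement, assume furthermore that $\nabla J=0$. Then for every $s\in\Gamma Q$,
\[
P(J)(s)=\sum_{i=1}^{2n}(\nabla_{Je_i}J)(s)\cdot\bar e_i=0,
\]
since each covariant derivative $\nabla_{Je_i}J$ vanishes. In particular, the term $\sqrt{-1}\sum_{i=1}^{2n}P(J)(J\bar e_i)\cdot\nabla_{e_i}\varphi$ in (\ref{5-8}) is identically zero, which leaves precisely the Weitzenböck identity (\ref{5-9}).

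The main (and indeed only) obstacle here is purely bookkeeping: one must take care that $\tau_\nabla$ is defined as a sum over a transversely symplectic frame but, because Proposition~2.2 already establishes it is well-defined, the identity $T_\nabla=0\Rightarrow\tau_\nabla=0$ is unambiguous. No further analytic or geometric input is needed beyond Theorem 5.6 and the definitions of $P(J)$ and $\tau_\nabla$.
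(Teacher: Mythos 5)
Your argument is correct and is exactly the paper's proof: the paper also simply notes that $T_\nabla=0$ (hence $\tau_\nabla=0$) gives (\ref{5-8}) by substitution into Theorem 5.6, and that $\nabla J=0$ forces $P(J)=0$, giving (\ref{5-9}). You merely spell out the bookkeeping that the paper labels ``trivial.''
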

 \begin{proof}  Since $T_\nabla =0$ for the transverse Fedosov connection, the proof of (\ref{5-8}) is trivial.  If $\nabla J=0$, then $P(J)=0$, which proves (\ref{5-9}).
 \end{proof}
 Since the contact flow and cosymplectic flow is minimal (that is, $\kappa=0$) \cite{Pa}, we have the following.
\begin{cor}  Let  $\nabla$ be a transversely symplectic connection on $(M,\mathcal F_\xi,\omega)$ with a transversely metaplectic structure. Then, for any $\varphi \in \Gamma Sp(\mathcal F)$
\begin{align*}
\mathcal P_{\rm tr}\varphi=& \nabla_{\rm tr}^*\nabla_{\rm tr}\varphi+\sqrt{-1}F(\varphi)-\frac14|\tau_\nabla|^2\varphi+{\sqrt{-1}\over 2}\{P(J\tau_\nabla)-\tilde P(\tau_\nabla)\}\cdot\varphi\\
&+\sqrt{-1}\sum_{i=1}^{2n}P(J)(J\bar e_i)\cdot\nabla_{e_i}\varphi+\sqrt{-1}\sum_{i,j=1}^{2n} e_i\cdot J \bar e_j\cdot\nabla_{T_\nabla(e_i,e_j)}\varphi.
\end{align*}
In addition,  if  $\nabla$ is a transverse Fedosove connection such that $\nabla J=0$,  then  for any $\varphi \in \Gamma Sp(\mathcal F)$
\begin{align*}
\mathcal P_{\rm tr}\varphi&= \nabla_{\rm tr}^*\nabla_{\rm tr}\varphi+\sqrt{-1}F(\varphi).
\end{align*}
\end{cor}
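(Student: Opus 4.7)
The plan is to deduce both identities directly from the general Weitzenböck formula in Theorem 5.6 by substituting the geometric simplifications that the hypotheses force. So there is no substantial new computation; the work is essentially bookkeeping on which terms vanish.

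First I would invoke the fact, recalled in Remark 2.6 and cited to \cite{Pa}, that the Reeb flow $\mathcal F_\xi$ of a contact manifold and the Reeb flow of an almost cosymplectic manifold are both minimal. Minimality is the statement $\kappa=0$, hence $\bar\kappa^\sharp=\flat^{-1}(\kappa)=0$. Plugging $\bar\kappa^\sharp=0$ into the formula of Theorem 5.6 collapses every $\bar\kappa^\sharp+\tau_\nabla$ to $\tau_\nabla$ and $|\bar\kappa^\sharp+\tau_\nabla|^2$ to $|\tau_\nabla|^2$. This produces the first displayed identity of the corollary with no further manipulation.

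For the second statement I would then impose the additional hypotheses that $\nabla$ is a transverse Fedosov connection with $\nabla J=0$. The Fedosov condition is $T_\nabla=0$, and so the torsion vector field
\[
\tau_\nabla=\sum_{i=1}^{n}T_\nabla(v_i,w_i)
\]
vanishes identically. Consequently $|\tau_\nabla|^2=0$, the curly bracket $P(J\tau_\nabla)-\tilde P(\tau_\nabla)$ is zero (both $P$ and $\tilde P$ are $\mathbb C$-linear in their argument by the definition \eqref{5-2}), and the last term $\sum_{i,j}\bar e_i\cdot J\bar e_j\cdot\nabla_{T_\nabla(e_i,e_j)}\varphi$ is zero termwise. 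The condition $\nabla J=0$ gives $(\nabla_{Je_i}J)(s)=0$ for every $s$ and every $i$, so by definition $P(J)=\sum_i(\nabla_{Je_i}J)(\,\cdot\,)\cdot \bar e_i\equiv 0$; this kills the remaining term $\sqrt{-1}\sum_{i}P(J)(J\bar e_i)\cdot\nabla_{e_i}\varphi$. What remains of the formula is precisely $\nabla_{\rm tr}^*\nabla_{\rm tr}\varphi+\sqrt{-1}F(\varphi)$, as claimed.

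The main (and essentially only) obstacle would have been if some of these terms did not vanish termwise but only in some averaged or integrated sense; a quick check of the definitions \eqref{5-2} and of the formula for $P(J)$ in Lemma 5.5 confirms that each is manifestly linear in the tensor that is being set to zero, so pointwise vanishing is immediate. Hence the proof reduces to two one-line observations, and I would present it as a direct corollary to Theorem 5.6 with a short justification that $\kappa=0$ (contact/cosymplectic flows are minimal) and that $T_\nabla=0$ forces $\tau_\nabla=0$, combined with the fact that $\nabla J=0$ implies $P(J)=0$.
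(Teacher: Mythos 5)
Your proposal is correct and coincides with the argument implicit in the paper: the paper also reads this corollary off Theorem 5.6 by first invoking the minimality of contact and cosymplectic flows from \cite{Pa} to set $\bar\kappa^{\sharp}=0$, and then (exactly as in the proof of the preceding corollary) noting that $T_\nabla=0$ forces $\tau_\nabla=0$ and kills the torsion term, while $\nabla J=0$ makes $P(J)$ vanish termwise.
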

 
 \section{Properties on the foliated symplectic spinor bundle}
 Let $(M,\mathcal F,\omega,\nabla)$ be a transversely symplectic foliation with a fixed transversely metaplectic structure and  a transversely symplectic connection.
 First, we recall the properties of the Hermite functions on $\mathbb R^n$. For precise definition,  see \cite{FO, Ha3}.
 
  Let  $H_0:L^2(\mathbb R^n)\to L^2(\mathbb R^n)$ be the Hamilton operator, which is defined by
 \begin{equation}\label{6-1}
 (H_0f)(x) = \frac12\sum_{j=1}^n \Big ( {\partial^2 f\over \partial x_j^2}(x) -x_j^2 f(x)\Big).
 \end{equation}
 Equivalently,  from (\ref{3-1}) we get
 \begin{equation}\label{6-1-1}
 H_0 f = \frac12\sum_{j=1}^n \Big(\sigma(a_j)\sigma(a_j) + \sigma(b_j)\sigma( b_j )\Big)f.
 \end{equation}
  Now, we define the Hermite function $h_\beta\in L^2(\mathbb R^n)$ on $\mathbb R^n$ by
 \begin{equation}\label{6-2}
 h_\beta(x) = h_{\beta_1}(x_1)\cdots h_{\beta_n}(x_n), \quad x=(x_1,\cdots,x_n),
 \end{equation}
where $\beta = (\beta_1,\cdots,\beta_n)$,  $\beta_j (j=1,\cdots,n)$ are nonnegative integers and
 \begin{equation}\label{6-3}
 h_{\ell}(t) = e^{t^2\over 2} {d^\ell\over dt^\ell}(e^{-t^2}),\quad t\in\mathbb R
 \end{equation}
 is the classical Hermite functions on $\mathbb R$. Then  the Hermite functions  form a complete orthogonal system in $L^2(\mathbb R^n)$ of eigenfunctions of $H_0$ \cite{FO}.  In particular,
 \begin{equation}\label{6-4}
 H_0 h_\beta = -(|\beta| + \frac{n}2) h_\beta,
 \end{equation}
 where $|\beta| = \beta_1 + \cdots +\beta_n$.  Let $\mathcal M_\ell $ denote the eigenspace of $H_0$ with eigenvalue $-(\ell + \frac{n}2)$, that is,
 \begin{equation}\label{6-5-1}
 \mathcal M_\ell =\{ f\in L^2(\mathbb R^n)\ |\ H_0 f = -(\ell + {n\over 2})f\}.
 \end{equation}
  Then  by combinatorial  computation, we get
  \begin{equation}\label{6-5}
 \dim_{\mathbb C} \mathcal M_\ell = {}_{n+\ell-1}C_\ell.
 \end{equation}
 Moreover, the spaces $\mathcal M_\ell (l=0,1,\cdots)$ form an orthogonal decomposition of $L^2 (\mathbb R^n)$.

  Let $P_{Sp}^J(Q)$ denote the corresponding $U(n)$-reduction of the symplectic frame bundle $P_{Sp}(Q)$. So the fiber of $P_{Sp}^J(Q)$ at $x\in M$ is the set  of all unitary basis of $Q_x$.
Set 
\begin{equation*}
\tilde P_{Mp}^J(Q) =\Pi^{-1}(P_{Sp}^J(Q)),
\end{equation*}
where $\Pi:\tilde P_{Mp}(Q)\to P_{Sp}(Q)$ is the bundle morphism. 
 Clearly, $\tilde P_{Mp}^J(Q)$  is a principal $\tilde U(n)$-bundle, where $\tilde U(n)\subset Mp(n,\mathbb R)$ is the double cover of $U(n)\subset Sp(n,\mathbb R)$.  Moreover, the foliated symplectic spinor bundle $Sp(\mathcal F)$ is associated to $\tilde P_{Mp}^J(Q)$ by  the restriction $\mathfrak u = \mathfrak m|_{\tilde U(n)}$, i.e., 
\begin{align*}
Sp(\mathcal F) = \tilde P_{Mp}^J(Q)\times_{\mathfrak u} L^2 (\mathbb R^n).
\end{align*} 
Then the bundle $Sp(\mathcal F)$ is decomposed into finite rank subbundles $Sp_\ell^J(\mathcal F)$, where
\begin{equation}\label{6-6}
Sp_\ell^J(\mathcal F) = \tilde P_{Mp}^J(Q)\times_{\mathfrak u_\ell} \mathcal M_\ell,
\end{equation}
where $\mathfrak u_\ell$ is the restriction of the unitary representation $\mathfrak u$ to the subspace $\mathcal M_\ell$, that is, $\mathfrak u_\ell : \tilde U(n)\to U(\mathcal M_\ell)$ is the irreducible representation. From (\ref{6-5}), we have
\begin{equation}\label{6-7}
{\rm rank}_{\mathbb C} Sp_\ell^J(\mathcal F) = {}_{n+\ell-1}C_\ell.
\end{equation}
   On a foliated symplectic spinor bundle $Sp(\mathcal F)$, we define $\mathcal H^J:Sp(\mathcal F)\to Sp(\mathcal F)$ by
 \begin{equation}\label{6-8}
 \mathcal H^J([p,f]) = [p,H_0 f]
 \end{equation}
 for $p\in \tilde P_{Mp}^J(Q)$ and $f\in  L^2(\mathbb R^n)$.
\
From (\ref{6-4}), (\ref{6-6}) and (\ref{6-8}),  we have the following.
\begin{prop}   For any $\varphi \in \Gamma Sp_\ell^J(\mathcal F)$, it holds 
\begin{equation*}
\mathcal H^J (\varphi) = -(\ell + \frac{n}2) \varphi.
\end{equation*}
\end{prop}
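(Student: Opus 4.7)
The plan is to exploit the fact that $Sp_\ell^J(\mathcal F)$ is, by construction, the bundle associated via $\mathfrak{u}_\ell$ to the $H_0$-eigenspace $\mathcal M_\ell$, so applying $\mathcal H^J$ fiberwise reduces to applying $H_0$ on a space on which it is scalar. The entire statement is essentially unwinding the two definitions (6.5-1) and (6-6) together with (6.4).

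First I would verify that $\mathcal H^J$ is a well-defined endomorphism of the associated bundle, i.e.\ that $H_0$ commutes with $\mathfrak u(g)$ for every $g\in\tilde U(n)$, so that the formula $[p,f]\mapsto[p,H_0f]$ is independent of the representative $(p,f)$. Using the expression (6-1-1) for $H_0$ in terms of $\sigma$, together with the intertwining relation in (3-0) which gives $\mathfrak m(g)\sigma(v)\mathfrak m(g^{-1})=\sigma(\rho(g)v)$, one reduces the commutation to the statement that the tensor $\sum_{j=1}^n(a_j\otimes a_j+b_j\otimes b_j)$ on $\mathbb R^{2n}$ is invariant under $\rho(g)\in U(n)\subset Sp(n,\mathbb R)$, which holds because the elements of $U(n)$ are precisely the symplectic automorphisms preserving the standard Euclidean inner product on $\mathbb R^{2n}$.

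Next, given any $\varphi\in\Gamma Sp_\ell^J(\mathcal F)$, I would write it locally as $\varphi=[p,f]$ with $p\in\tilde P_{Mp}^J(Q)$ and $f\in\mathcal M_\ell$, where by (6.5-1) the value $f$ lies in the $H_0$-eigenspace with eigenvalue $-(\ell+\tfrac{n}{2})$. Hence $H_0f=-(\ell+\tfrac{n}{2})f$, so
\begin{equation*}
\mathcal H^J(\varphi)=[p,H_0f]=\bigl[p,-(\ell+\tfrac{n}{2})f\bigr]=-(\ell+\tfrac{n}{2})[p,f]=-(\ell+\tfrac{n}{2})\varphi,
\end{equation*}
which is the claimed identity; a pointwise argument suffices since everything is linear over smooth functions.

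The only step with any substance is the well-definedness of $\mathcal H^J$, and even that is not really an obstacle because it is a known property of the metaplectic representation restricted to $\tilde U(n)$ (it fixes the Fock grading, i.e.\ the Hermite decomposition of $L^2(\mathbb R^n)$); alternatively, one can bypass global well-definedness on $Sp(\mathcal F)$ entirely by defining $\mathcal H^J$ directly on the subbundle $Sp_\ell^J(\mathcal F)$, where by (6.4) it manifestly acts as the scalar $-(\ell+\tfrac{n}{2})$.
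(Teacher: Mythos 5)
Your proof is correct and follows essentially the same route as the paper: write $\varphi=[p,f]$ with $f\in\mathcal M_\ell$ and apply $H_0f=-(\ell+\tfrac{n}{2})f$. The additional verification that $\mathcal H^J$ is well defined on the associated bundle (via $U(n)$-invariance of $H_0$) is a nice bonus the paper takes for granted, but the substance of the argument is identical.
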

\begin{proof}  Let $\varphi \in \Gamma Sp_\ell^J(\mathcal F)$, that is, $\varphi =[p,f]$ for $f\in \mathcal M_\ell$. Then $\mathcal H^J(\varphi) =[p, H_0 f] = -(\ell +\frac{n}2) [p,f] =-(\ell+\frac{n}2)\varphi$.
\end{proof}
\begin{lem}   For any $\varphi \in \Gamma Sp(\mathcal F)$, 
\begin{equation}\label{6-9}
\mathcal H^J(\varphi) = \frac12\sum_{j=1}^{2n} \bar e_j\cdot \bar e_j \cdot \varphi.
\end{equation}
Moreover,  for any $\varphi,\psi\in \Gamma Sp(\mathcal F)$, we have
\begin{equation}\label{6-10}
<\mathcal H^J(\varphi),\psi> = < \varphi,\mathcal H^J(\psi)>.
\end{equation} 
\end{lem}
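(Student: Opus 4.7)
The plan is to verify the pointwise identity \eqref{6-9} by trivializing $\tilde P_{Mp}^J(Q)$ along the given unitary basic frame, and then to deduce the formal self-adjointness \eqref{6-10} as a one-line consequence of \eqref{6-9} and the skew-symmetry property \eqref{3-6}.

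For \eqref{6-9}, I would work locally. Fix $x\in M$ and use the unitary basic frame $\{e_1,\dots,e_{2n}\}$ with $\bar e_{n+i}=J\bar e_i$. This frame picks out a local section $\tilde p$ of $P_{Sp}^J(Q)$ determined by $\tilde p\cdot a_j=\bar e_j$ and $\tilde p\cdot b_j=\bar e_{n+j}$ for $j=1,\dots,n$, where $\{a_j,b_j\}$ is the standard symplectic basis of $\mathbb R^{2n}$. Lifting $\tilde p$ to a local section $p$ of $\tilde P_{Mp}^J(Q)$, any $\varphi\in\Gamma Sp(\mathcal F)$ is written locally as $\varphi=[p,f]$ with $f$ a smooth function into $L^2(\mathbb R^n)$. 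By the definition \eqref{3-2} of the symplectic Clifford multiplication,
\begin{equation*}
\bar e_j\cdot\varphi=[p,\sigma(a_j)f],\qquad \bar e_{n+j}\cdot\varphi=[p,\sigma(b_j)f],\qquad j=1,\dots,n,
\end{equation*}
so iterating and summing gives
\begin{equation*}
\sum_{j=1}^{2n}\bar e_j\cdot\bar e_j\cdot\varphi=\Big[p,\sum_{j=1}^n\bigl(\sigma(a_j)\sigma(a_j)+\sigma(b_j)\sigma(b_j)\bigr)f\Big]=[p,2H_0f]=2\mathcal H^J(\varphi),
\end{equation*}
by \eqref{6-1-1} and the defining relation \eqref{6-8} of $\mathcal H^J$. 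This proves \eqref{6-9}.

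For \eqref{6-10}, I would apply the skew-symmetry \eqref{3-6} twice inside each summand:
\begin{equation*}
<\bar e_j\cdot\bar e_j\cdot\varphi,\psi>=-<\bar e_j\cdot\varphi,\bar e_j\cdot\psi>=<\varphi,\bar e_j\cdot\bar e_j\cdot\psi>.
\end{equation*}
Summing from $j=1$ to $2n$, dividing by $2$, and using \eqref{6-9} on both sides gives $<\mathcal H^J(\varphi),\psi>=<\varphi,\mathcal H^J(\psi)>$.

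The only conceptual point to check is that the right-hand side of \eqref{6-9} is independent of the choice of lift $p$, equivalently that $H_0$ commutes with the restricted representation $\mathfrak u=\mathfrak m|_{\tilde U(n)}$; but this is precisely the fact already invoked in \eqref{6-6} to obtain the subbundle decomposition, since $\mathcal M_\ell$ being $\mathfrak u$-invariant is the same as $H_0$ commuting with $\mathfrak u$. I therefore do not expect any substantive obstacle; the whole argument is a short unwinding of definitions.
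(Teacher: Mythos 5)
Your proof is correct and takes essentially the same route as the paper: both compute \eqref{6-9} by writing $\varphi=[p,f]$ in the local trivialization associated to the unitary basic frame and using \eqref{6-1-1} and \eqref{3-2}, and both deduce \eqref{6-10} from \eqref{3-6} and \eqref{6-9}. Your write-up is simply more explicit, notably in spelling out the double application of \eqref{3-6} and in noting the well-definedness of the right-hand side of \eqref{6-9}, which the paper leaves implicit.
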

\begin{proof}  The proof of (\ref{6-9}) is similar to Lemma 3.3.2 in \cite{Ha3}.  That is,  let $\varphi =[p,f]$. Then
\begin{align*}
\mathcal H^J(\varphi)&=[p,H_0 f]\\
&=\frac12\sum_{j=1}^n [p,\sigma(a_j)\sigma(a_j) f + \sigma(b_j)\sigma(b_j) f]\\
&=\frac12\sum_{j=1}^n (\bar e_j\cdot \bar e_j + \bar e_{n+j}\cdot\bar e_{n+j}) \cdot\varphi\\
&=\frac12\sum_{j=1}^{2n}\bar e_j\cdot\bar e_j \cdot \varphi,
\end{align*}
where $\bar e_j\cdot\varphi = [p,\sigma(a_j)f]$ and $\bar e_{n+j} \cdot\varphi = [p,\sigma(b_j)f]$.
  The proof of (\ref{6-10}) follows from (\ref{3-6}) and (\ref{6-9}).
\end{proof}

\begin{prop}  For any  $s\in \Gamma Q$ and $\varphi\in \Gamma Sp(\mathcal F)$, we have
\begin{align}\label{6-13}
\mathcal H^J(s\cdot\varphi) = s\cdot \mathcal H^J(\varphi) + \sqrt{-1} Js\cdot\varphi.
\end{align}
\end{prop}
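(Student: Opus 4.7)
The plan is to reduce the identity to a direct computation using Lemma 6.2, which expresses $\mathcal H^J$ as a quadratic Clifford expression, together with the symplectic commutation relation \eqref{3-4}. The main work is then purely algebraic: pushing $s$ through the two Clifford factors $\bar e_j\cdot\bar e_j$ and identifying the correction with $\sqrt{-1}Js\cdot\varphi$.

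First I would apply Lemma 6.2 to write
\begin{equation*}
\mathcal H^J(s\cdot\varphi)=\frac12\sum_{j=1}^{2n}\bar e_j\cdot\bar e_j\cdot s\cdot\varphi .
\end{equation*}
The commutation relation \eqref{3-4} gives, as operators on $\Gamma Sp(\mathcal F)$, the identity $\bar e_j\cdot s = s\cdot\bar e_j+\sqrt{-1}\,\omega_Q(s,\bar e_j)$. Applying this twice yields
\begin{equation*}
\bar e_j\cdot\bar e_j\cdot s\cdot\varphi = s\cdot\bar e_j\cdot\bar e_j\cdot\varphi+2\sqrt{-1}\,\omega_Q(s,\bar e_j)\,\bar e_j\cdot\varphi ,
\end{equation*}
so that, summing over $j$ and dividing by $2$, one obtains
\begin{equation*}
\mathcal H^J(s\cdot\varphi)=s\cdot\mathcal H^J(\varphi)+\sqrt{-1}\sum_{j=1}^{2n}\omega_Q(s,\bar e_j)\,\bar e_j\cdot\varphi .
\end{equation*}

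It then remains to recognize the sum on the right as $Js\cdot\varphi$. Since $g_Q(u,v)=\omega_Q(u,Jv)$ and $J^{2}=-1$, one has $\omega_Q(s,\bar e_j)=g_Q(Js,\bar e_j)$. Because $\{\bar e_1,\dots,\bar e_{2n}\}$ is $g_Q$-orthonormal, $Js = \sum_j g_Q(Js,\bar e_j)\bar e_j$, and the formula in the proposition follows.

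The only real care needed is tracking signs in the commutation and in the conversion $\omega_Q\leftrightarrow g_Q$ via $J$; there is no analytic obstruction, as the entire argument is pointwise-algebraic on the fibers of $Sp(\mathcal F)$.
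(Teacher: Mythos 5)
Your proof is correct and follows exactly the route the paper intends: the paper's proof is the one-line remark ``From (3-4) and Lemma 6.2, the proof follows,'' and you have simply spelled out that computation — expressing $\mathcal H^J$ via Lemma 6.2, commuting $s$ past the two Clifford factors with (3.4), and identifying $\sum_j\omega_Q(s,\bar e_j)\bar e_j=Js$ (which uses the standard $J$-invariance of $\omega_Q$ for a compatible pair, worth noting explicitly but certainly true).
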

\begin{proof}  From (\ref{3-4}) and Lemma 6.2,  the proof follows.
\end{proof}
\begin{prop}   For   any  $X\in \Gamma TM$ and  $\varphi\in \Gamma Sp(\mathcal F)$, we have
\begin{align}\label{6-14}
\nabla_X (\mathcal H^J\varphi) = \mathcal H^J (\nabla_X\varphi)+\sum_{j=1}^{2n}J(\nabla_XJ)\bar e_j\cdot \bar e_j\cdot\varphi
\end{align}

In particular, if $\nabla J=0$, then
\begin{equation}\label{6-15}
\nabla_X (\mathcal H^J\varphi) = \mathcal H^J (\nabla_X\varphi).
\end{equation}
\end{prop}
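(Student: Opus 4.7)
The plan is to differentiate the pointwise expression for $\mathcal H^J$ from Lemma~6.2, $\mathcal H^J\varphi=\frac{1}{2}\sum_{j=1}^{2n}\bar e_j\cdot\bar e_j\cdot\varphi$, and then identify the resulting correction. Applying the Leibniz rule (\ref{3-7}) for the spinor derivative twice yields
\[
\nabla_X(\mathcal H^J\varphi)=\mathcal H^J(\nabla_X\varphi)+\frac{1}{2}\sum_{j=1}^{2n}\bigl\{(\nabla_X\bar e_j)\cdot\bar e_j+\bar e_j\cdot(\nabla_X\bar e_j)\bigr\}\cdot\varphi,
\]
so the task reduces to matching the extra term with $\sum_j J(\nabla_XJ)\bar e_j\cdot\bar e_j\cdot\varphi$.

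I would exploit the structure of the unitary basic frame, $\bar e_{n+i}=J\bar e_i$, to split the sum over $j$ into the halves $j\le n$ and $j>n$, and expand $\nabla_X(J\bar e_i)=(\nabla_XJ)\bar e_i+J(\nabla_X\bar e_i)$. This partitions the extra term into ``deformation'' contributions (those involving $(\nabla_XJ)\bar e_i$) and ``connection'' contributions (involving only $\nabla_X\bar e_i$). The algebraic ingredients are: (i)~the Clifford relation (\ref{3-4}), $s\cdot t-t\cdot s=-\sqrt{-1}\,\omega_Q(s,t)$, used to commute Clifford factors at the cost of scalar corrections; (ii)~the $\omega_Q$-compatibility of $J$, giving $\omega_Q(Js,Jt)=\omega_Q(s,t)$; (iii)~the identity $J(\nabla_XJ)=-(\nabla_XJ)J$ obtained from $\nabla_X(J^2)=0$, which in particular yields $J(\nabla_XJ)J=\nabla_XJ$ and is precisely what pairs the $i$-th and $(n+i)$-th deformation contributions into the claimed form; and (iv)~$\nabla\omega_Q=0$, which forces the connection matrix of $\nabla$ in the symplectic frame to lie in $\mathfrak{sp}(n,\mathbb{R})$.

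The connection contributions, expanded via (i), produce scalar remainders of the form $\omega_Q(\bar e_j,\nabla_X\bar e_j)$; these vanish because $\nabla\omega_Q=0$ makes them diagonal entries of antisymmetric matrices, and the surviving bilinear pieces cancel after relabelling indices. The ``in particular'' statement is then immediate, since $\nabla J=0$ makes $\nabla_XJ=0$ and the entire correction vanishes. I expect the main obstacle to be the bookkeeping in the second step: several applications of (\ref{3-4}) generate scalar commutator terms that must be checked to cancel precisely, and it is the combination of $J$-compatibility of $\omega_Q$ with the anticommutation of $\nabla_XJ$ with $J$ that ultimately reassembles the remaining Clifford bilinears into the clean form $\sum_j J(\nabla_XJ)\bar e_j\cdot\bar e_j$.
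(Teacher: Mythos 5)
Your first step is the same as the paper's: differentiate $\mathcal H^J\varphi=\frac12\sum_j\bar e_j\cdot\bar e_j\cdot\varphi$ with the Leibniz rule (\ref{3-7}) to get $\nabla_X(\mathcal H^J\varphi)=\mathcal H^J(\nabla_X\varphi)+\frac12\sum_{j=1}^{2n}\{\nabla_X\bar e_j\cdot\bar e_j+\bar e_j\cdot\nabla_X\bar e_j\}\cdot\varphi$, which is precisely the paper's (6.16). The problem is the second step. You split the sum into $j\le n$ and $j>n$, expand $\nabla_X(J\bar e_i)=(\nabla_XJ)\bar e_i+J\nabla_X\bar e_i$, and assert that the resulting ``connection'' contributions
\[
\tfrac12\sum_{i=1}^n\bigl\{\nabla_X\bar e_i\cdot\bar e_i+\bar e_i\cdot\nabla_X\bar e_i+J\nabla_X\bar e_i\cdot J\bar e_i+J\bar e_i\cdot J\nabla_X\bar e_i\bigr\}\cdot\varphi
\]
vanish after killing the scalar commutators. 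They do not. The Clifford commutators do vanish because $\omega_Q(\nabla_X\bar e_j,\bar e_j)=0$, but what remains is $\sum_i\{\bar e_i\cdot\nabla_X\bar e_i+J\bar e_i\cdot J\nabla_X\bar e_i\}\cdot\varphi$, and no relabelling annihilates it. The hypothesis $\nabla\omega_Q=0$ forces $\omega_Q(\nabla_X\bar e_j,\bar e_k)$ to be \emph{symmetric}, not $g_Q(\nabla_X\bar e_j,\bar e_k)$ to be antisymmetric; the latter would require $\nabla g_Q=0$, which for an $\omega_Q$-compatible $\nabla$ is equivalent to $\nabla J=0$, i.e., to the very thing you are allowed to drop. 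A codimension-two check makes this explicit: with $\bar e_2=J\bar e_1$, $\nabla_X\bar e_1=a\bar e_1+b\bar e_2$, $\nabla_X\bar e_2=c\bar e_1-a\bar e_2$ (the general element of $\mathfrak{sp}(1,\mathbb R)$), the connection part equals $a(\bar e_1\cdot\bar e_1+\bar e_2\cdot\bar e_2)\cdot\varphi\ne0$ when $a\ne0$, and the deformation part equals $\tfrac12(b+c)(\bar e_1\cdot\bar e_2+\bar e_2\cdot\bar e_1)\cdot\varphi-2a\,\bar e_2\cdot\bar e_2\cdot\varphi$; only their \emph{sum} gives the correct $a(\bar e_1\cdot\bar e_1-\bar e_2\cdot\bar e_2)\cdot\varphi+\tfrac12(b+c)(\bar e_1\cdot\bar e_2+\bar e_2\cdot\bar e_1)\cdot\varphi$. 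So there is genuine cross-cancellation between your two halves, and the deformation half alone is not $\sum_j J(\nabla_XJ)\bar e_j\cdot\bar e_j\cdot\varphi$.

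The paper avoids the split entirely: it expands $\nabla_X\bar e_j=\sum_k\omega_Q(\nabla_X\bar e_j,J\bar e_k)\bar e_k$ in the full unitary frame, symmetrizes in $(j,k)$ (which is harmless since the two placements of the Clifford factors appear together), and recognizes the symmetric coefficient as $\omega_Q((\nabla_XJ)\bar e_j,\bar e_k)$ via the $\nabla\omega_Q=0$ identity $\omega_Q((\nabla_XJ)\bar e_j,\bar e_k)=\omega_Q(\nabla_X\bar e_j,J\bar e_k)+\omega_Q(\nabla_X\bar e_k,J\bar e_j)$; comparing with the frame expansion of $J(\nabla_XJ)\bar e_j$ then finishes the computation in one line. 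That identity, which trades the connection matrix for the tensor $\nabla_XJ$ over the \emph{whole} frame at once, is the load-bearing step missing from your plan.
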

\begin{proof}   From (\ref{6-9}), we have that for any $\varphi$
\begin{equation}\label{6-16}
\nabla_X (\mathcal H^J \varphi)= \frac12\sum_{j=1}^{2n} \{\nabla_X \bar e_j \cdot \bar e_j  + \bar e_j\cdot\nabla_X \bar e_j \} \cdot\varphi + \mathcal H^J(\nabla_X\varphi).
\end{equation}
Since $\nabla$ is transversely symplectic, we get
\begin{equation}\label{6-17}
\omega_Q((\nabla_XJ)\bar e_i,\bar e_j)= \omega_Q(\nabla_X \bar e_j,J\bar e_i) + \omega_Q(\nabla_X \bar e_j,J\bar e_i).
\end{equation}
Note that for any $s\in \Gamma Q$,  $s=\sum_{j=1}^{2n}\omega_Q(s,J\bar e_j)\bar e_j$.  Then   from  (\ref{6-17}), we get 
\begin{align*}
\sum_{j=1}^{2n} \{\nabla_X \bar e_j \cdot \bar e_j  + \bar e_j\cdot\nabla_X \bar e_j \} \cdot\varphi &=\sum_{i,j=1}^{2n} \{\omega_Q(\nabla_X \bar e_i,J\bar e_j)\bar e_j\cdot \bar e_i + \omega_Q(\nabla_X \bar e_i,J\bar e_j) \bar e_i\cdot \bar e_j\}\cdot\varphi\\
&=\sum_{i,j=1}^{2n} \{\omega_Q(\nabla_X \bar e_i,J\bar e_j) + \omega_Q(\nabla_X \bar e_j,J\bar e_i)\}\bar e_j\cdot\bar e_i\cdot\varphi\\&=\sum_{i,j=1}^{2n}\omega_Q((\nabla_XJ)\bar e_i,\bar e_j)\bar e_j\cdot\bar e_i\cdot\varphi.
\end{align*}
 From (\ref{6-16}),  the proof of (\ref{6-14}) follows.  The proof of (\ref{6-15}) is trivial from (\ref{6-14}).
\end{proof}

\begin{cor}  If  $\nabla J=0$,    then  for any $X\in \Gamma TM$ and $\varphi\in\Gamma Sp_\ell^J(\mathcal F)$,
\begin{equation*}
\nabla_X \varphi \in \Gamma Sp_\ell^J(\mathcal F).
\end{equation*}
\end{cor}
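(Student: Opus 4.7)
The plan is to use the characterization of $\Gamma Sp_\ell^J(\mathcal F)$ as the eigenspace of the bundle operator $\mathcal H^J$ corresponding to eigenvalue $-(\ell+\tfrac{n}{2})$. By Proposition 6.1 and the orthogonal decomposition $L^2(\mathbb R^n)=\bigoplus_{\ell\ge 0}\mathcal M_\ell$ from \eqref{6-5-1}, a section $\varphi\in\Gamma Sp(\mathcal F)$ lies in $\Gamma Sp_\ell^J(\mathcal F)$ if and only if $\mathcal H^J(\varphi)=-(\ell+\tfrac{n}{2})\varphi$. Thus the corollary will follow once we show that the condition $\mathcal H^J(\varphi)=-(\ell+\tfrac{n}{2})\varphi$ is preserved by $\nabla_X$.

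For this step I would simply invoke \eqref{6-15} from Proposition 6.4: since $\nabla J=0$, the operator $\nabla_X$ commutes with $\mathcal H^J$. Applying this to $\varphi\in\Gamma Sp_\ell^J(\mathcal F)$ yields
\begin{equation*}
\mathcal H^J(\nabla_X\varphi)=\nabla_X(\mathcal H^J\varphi)=\nabla_X\!\left(-\bigl(\ell+\tfrac{n}{2}\bigr)\varphi\right)=-\bigl(\ell+\tfrac{n}{2}\bigr)\nabla_X\varphi,
\end{equation*}
so $\nabla_X\varphi$ belongs to the same eigenspace, which is $\Gamma Sp_\ell^J(\mathcal F)$.

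There is essentially no obstacle: the heavy lifting has already been done in Proposition 6.4, where the identity $\nabla_X(\mathcal H^J\varphi)=\mathcal H^J(\nabla_X\varphi)$ was established under the hypothesis $\nabla J=0$, together with the eigenspace description of $Sp_\ell^J(\mathcal F)$ in Proposition 6.1. The only conceptual point to note is that the eigenspace decomposition holds fiberwise and is respected by smooth sections, so preserving the pointwise eigen-equation is indeed the same as preserving the subbundle. Hence the corollary follows in one line from the commutation relation \eqref{6-15}.
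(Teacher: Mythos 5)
Your proof is correct and is exactly the argument the paper has in mind: the paper gives no explicit proof for this corollary, precisely because it follows in one line from Proposition 6.1 (the eigenspace description of $Sp_\ell^J(\mathcal F)$) and equation (6-15) of Proposition 6.4 (the commutation $\nabla_X\mathcal H^J=\mathcal H^J\nabla_X$ when $\nabla J=0$), which is what you invoke.
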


\section{Vanishing of the special spinors in $Sp_o^J(\mathcal F)$}
Let $(M,\mathcal F,\omega,\nabla)$ be a transversely symplectic foliation with a fixed  transversely metaplectic structure and a transversely symplectic connection. 
\begin{prop}  If   $\nabla J=0$, then for any $\varphi \in \Gamma Sp(\mathcal F)$, 
\begin{align}
\mathcal H^J(D_{\rm tr}\varphi) &= D_{\rm tr}(\mathcal H^J\varphi) + \sqrt{-1} \tilde D_{\rm tr}\varphi\label{7-1}\\
\mathcal H^J(\tilde D_{\rm tr}\varphi)&= \tilde D_{\rm tr}(\mathcal H^J\varphi) - \sqrt{-1} D_{\rm tr}\varphi\label{7-2}\\
\mathcal H^J(\mathcal P_{\rm tr}\varphi)& = \mathcal P_{\rm tr}(\mathcal H^J\varphi).\label{7-3-1}
\end{align}
Trivially,  $\mathcal P_{\rm tr}$ preserves the space $\Gamma Sp_\ell^J(\mathcal F)$.
\end{prop}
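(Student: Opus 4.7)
The plan is to derive the three identities in a sequential way, bootstrapping from the first to the third. All three identities share the same underlying mechanism: the commutator of $\mathcal H^J$ with Clifford multiplication is controlled by Proposition 6.3, namely $\mathcal H^J(s\cdot\varphi) = s\cdot\mathcal H^J(\varphi)+\sqrt{-1}\,Js\cdot\varphi$, while the hypothesis $\nabla J=0$ together with Corollary 6.5 guarantees $\mathcal H^J(\nabla_X\varphi)=\nabla_X(\mathcal H^J\varphi)$. These are the two commutation rules I will apply repeatedly.

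For the identity (\ref{7-1}), I would start from the decomposition $D_{\rm tr}=D_{\rm tr}'-\tfrac12(\bar\kappa^\sharp+\tau_\nabla)\cdot$ and handle the two summands separately. Using Lemma 5.1, $D_{\rm tr}'\varphi=-\sum J\bar e_i\cdot\nabla_{e_i}\varphi$. Applying $\mathcal H^J$ to each term, Proposition 6.3 produces a main term $J\bar e_i\cdot\mathcal H^J(\nabla_{e_i}\varphi)$ plus a correction $\sqrt{-1}\,J(J\bar e_i)\cdot\nabla_{e_i}\varphi=-\sqrt{-1}\,\bar e_i\cdot\nabla_{e_i}\varphi$; summing, the main contribution reassembles to $D_{\rm tr}'(\mathcal H^J\varphi)$ (after commuting $\mathcal H^J$ past $\nabla_{e_i}$) and the correction reassembles to $\sqrt{-1}\,\tilde D_{\rm tr}'\varphi$. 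For the zeroth-order term, the same commutation rule yields a main part $-\tfrac12(\bar\kappa^\sharp+\tau_\nabla)\cdot\mathcal H^J\varphi$ and a correction $-\tfrac{\sqrt{-1}}{2}J(\bar\kappa^\sharp+\tau_\nabla)\cdot\varphi$. Combining the main parts gives $D_{\rm tr}(\mathcal H^J\varphi)$, and combining the corrections gives exactly $\sqrt{-1}\,\tilde D_{\rm tr}\varphi$ in view of the definition $\tilde D_{\rm tr}=\tilde D_{\rm tr}'-\tfrac12 J(\bar\kappa^\sharp+\tau_\nabla)\cdot$. The identity (\ref{7-2}) is proved the same way, starting from $\tilde D_{\rm tr}'\varphi=\sum \bar e_i\cdot\nabla_{e_i}\varphi$ and noting that the correction now produces $\sqrt{-1}\,J\bar e_i\cdot\nabla_{e_i}\varphi=-\sqrt{-1}\,(-J\bar e_i)\cdot\nabla_{e_i}\varphi$, which assembles into $-\sqrt{-1}\,D_{\rm tr}'\varphi$ and then, after accounting for the zeroth-order piece, into $-\sqrt{-1}\,D_{\rm tr}\varphi$.

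Once (\ref{7-1}) and (\ref{7-2}) are in hand, (\ref{7-3-1}) follows by a purely formal calculation. Writing $\mathcal P_{\rm tr}=\sqrt{-1}(\tilde D_{\rm tr}D_{\rm tr}-D_{\rm tr}\tilde D_{\rm tr})$, I apply $\mathcal H^J$ to each product and insert (\ref{7-1})--(\ref{7-2}) twice. Each time the ``correction'' terms produced by pushing $\mathcal H^J$ through $D_{\rm tr}$ and then $\tilde D_{\rm tr}$ contribute $\pm\sqrt{-1}\tilde D_{\rm tr}^2\varphi$ and $\mp\sqrt{-1}D_{\rm tr}^2\varphi$; these cancel pairwise upon subtracting, leaving $\mathcal H^J[\tilde D_{\rm tr},D_{\rm tr}]\varphi=[\tilde D_{\rm tr},D_{\rm tr}](\mathcal H^J\varphi)$, i.e.\ (\ref{7-3-1}).

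For the final assertion, if $\varphi\in\Gamma Sp_\ell^J(\mathcal F)$ then Proposition 6.1 gives $\mathcal H^J\varphi=-(\ell+\tfrac n2)\varphi$, and (\ref{7-3-1}) yields $\mathcal H^J(\mathcal P_{\rm tr}\varphi)=-(\ell+\tfrac n2)\mathcal P_{\rm tr}\varphi$, so $\mathcal P_{\rm tr}\varphi$ is again an eigensection of $\mathcal H^J$ with the same eigenvalue, hence lies in $\Gamma Sp_\ell^J(\mathcal F)$. The only step requiring genuine care is tracking signs and $J$'s in the first identity; after that, (\ref{7-2}) is a parallel computation and (\ref{7-3-1}) is algebraic.
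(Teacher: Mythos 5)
Your proposal is correct and follows essentially the same route as the paper: split $D_{\rm tr}$ and $\tilde D_{\rm tr}$ into their first-order parts $D_{\rm tr}'$, $\tilde D_{\rm tr}'$ and the zeroth-order $\kappa$--$\tau$ corrections, commute $\mathcal H^J$ past Clifford multiplication via (\ref{6-13}) and past $\nabla_X$ via (\ref{6-15}), then reassemble; the paper merely compresses the first-order computation into the single displayed identity $\mathcal H^J(D_{\rm tr}'\varphi)=D_{\rm tr}'(\mathcal H^J\varphi)+\sqrt{-1}\tilde D_{\rm tr}'\varphi$ without spelling out the sign-tracking that you carry out, and it states (\ref{7-3-1}) as an immediate consequence whereas you supply the short cancellation argument. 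One minor slip: the commutation $\mathcal H^J(\nabla_X\varphi)=\nabla_X(\mathcal H^J\varphi)$ under $\nabla J=0$ is (\ref{6-15}) of Proposition 6.4, not Corollary 6.5 (which is a consequence of it).
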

\begin{proof}  For any $\varphi\in \Gamma Sp(\mathcal F)$, we have from (\ref{4-7}) and (\ref{6-13})
\begin{equation}\label{7-3}
\mathcal H^J(D_{\rm tr}\varphi)= \mathcal H^J(D_{\rm tr}' \varphi) -\frac12(\bar\kappa^\sharp+\tau_\nabla)\cdot \mathcal H^J (\varphi)-\frac{\sqrt{-1}}2 J(\bar\kappa^\sharp+\tau_\nabla)\cdot\varphi.
\end{equation}
From (\ref{6-13}) and (\ref{6-15}), we have
\begin{equation}\label{7-4}
\mathcal H^J(D_{\rm tr}' \varphi) = D_{\rm tr}'(\mathcal H^J \varphi) + \sqrt{-1} \tilde D_{\rm tr}'\varphi.
\end{equation}
Hence  from   (\ref{7-3}) and (\ref{7-4})
\begin{align*}
\mathcal H^J(D_{\rm tr}\varphi)=  (D_{\rm tr}' - \frac12(\bar\kappa^\sharp +\tau_\nabla))\mathcal H^J\varphi  + \sqrt{-1}(\tilde D_{\rm tr}' -\frac12 J(\bar\kappa^\sharp +\tau_\nabla))\cdot\varphi,
\end{align*}
which proves (\ref{7-1}).  The proof of (\ref{7-2}) is similary proved.  The proof of (\ref{7-3-1}) follows from (\ref{7-1}) and (\ref{7-2}).  The last statement is proved from Proposition 6.1.
\end{proof}
Let $\mathcal P^\ell_{\rm tr} = \mathcal P_{\rm tr}|_{Sp_\ell^J(\mathcal F)}$. 
In what follows, we study the Weitzenb\"ock  formula for $\mathcal P_{\rm tr}^0$ on $Sp_0^J(\mathcal F)$. 
\begin{prop}   For  any $s\in   \Gamma Q$ and $\varphi\in \Gamma Sp_0^J(\mathcal F)$,  we get
\begin{equation*}
Js\cdot\varphi = \sqrt{-1}s\cdot\varphi.
\end{equation*}
\end{prop}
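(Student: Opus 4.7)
The plan is to combine Proposition 6.3 with Proposition 6.1, reducing the claim to a short algebraic cancellation. The essential preliminary input is that symplectic Clifford multiplication by any element of $Q$ sends $\Gamma Sp_0^J(\mathcal F)$ into $\Gamma Sp_1^J(\mathcal F)$.

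First I would establish this shift property. Working in a local unitary frame $\{\bar e_j, J\bar e_j\}$ of $Q$, the Clifford multiplications by $\bar e_j$ and $J\bar e_j$ are modelled on $L^2(\mathbb R^n)$ by $\sigma(a_j)=\sqrt{-1}\,x_j$ and $\sigma(b_j)=\partial/\partial x_j$, respectively. A short creation/annihilation analysis (or a direct computation in the Hermite basis) shows that each of these operators shifts the Hermite index by $\pm 1$, i.e. $\sigma(v):\mathcal M_\ell\to\mathcal M_{\ell-1}\oplus\mathcal M_{\ell+1}$ for every $v\in\mathbb R^{2n}$. Since $\mathcal M_{-1}=0$, Clifford multiplication by any $s\in\Gamma Q$ restricts to a map $\Gamma Sp_0^J(\mathcal F)\to\Gamma Sp_1^J(\mathcal F)$.

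With this in place, I would apply Proposition 6.3 to the given $\varphi\in\Gamma Sp_0^J(\mathcal F)$ and $s\in\Gamma Q$:
\begin{equation*}
\mathcal H^J(s\cdot\varphi) = s\cdot\mathcal H^J(\varphi) + \sqrt{-1}\,Js\cdot\varphi.
\end{equation*}
By Proposition 6.1, $\mathcal H^J(\varphi) = -\tfrac{n}{2}\varphi$, and by the preliminary step $\mathcal H^J(s\cdot\varphi) = -(1+\tfrac{n}{2})\,s\cdot\varphi$. Substituting these two evaluations and cancelling the common term $-\tfrac{n}{2}\,s\cdot\varphi$ leaves $-s\cdot\varphi = \sqrt{-1}\,Js\cdot\varphi$, which rearranges to the desired identity $Js\cdot\varphi = \sqrt{-1}\,s\cdot\varphi$.

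The only step that demands genuine work is the preliminary grading claim: one has to drop into the model $L^2(\mathbb R^n)$ and verify that the symplectic Clifford action cannot produce an $\mathcal M_{-1}$-component out of $\mathcal M_0$. Equivalently, one uses that the ground state $h_0(x)=e^{-|x|^2/2}$ of the harmonic oscillator is annihilated by the lowering operator $\tfrac{1}{\sqrt 2}(x_j+\partial_j)$, so that $\sigma(a_j+\sqrt{-1}\,b_j)\,h_0 = 0$. Everything after that is a single algebraic cancellation in the Weitzenb\"ock-type formalism already set up in Section 6.
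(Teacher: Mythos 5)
Your proof is correct, and it takes a genuinely different route from the paper's. The paper works directly in the $L^2(\mathbb R^n)$ model: for $\varphi=[p,f]\in\Gamma Sp_0^J(\mathcal F)$, it notes that since $Sp_0^J(\mathcal F)$ has rank one, the second-order equation $H_0 f=-\tfrac{n}{2}f$ forces $f$ to satisfy the first-order ODE $\partial f/\partial x_j=-x_jf$, hence $\sigma(b_j)f=\sqrt{-1}\,\sigma(a_j)f$; combining this with $J_0a_j=b_j$, $J_0b_j=-a_j$ gives $\sigma(J_0v)=\sqrt{-1}\,\sigma(v)$ on $\mathcal M_0$, and one reads off the statement. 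You instead exploit the $\mathcal H^J$-grading: you observe that symplectic Clifford multiplication raises $Sp_0^J$ into $Sp_1^J$ (because $\sigma(v)$ shifts the Hermite degree by $\pm1$ and $\mathcal M_{-1}=0$), then feed the two eigenvalue evaluations $\mathcal H^J\varphi=-\tfrac{n}{2}\varphi$ and $\mathcal H^J(s\cdot\varphi)=-(1+\tfrac{n}{2})s\cdot\varphi$ into the commutator identity of Proposition 6.3 and cancel. Both routes hinge on the same underlying fact — the ground state is killed by the lowering operators $\sigma(a_j+\sqrt{-1}\,b_j)$ — but the paper deploys it as an explicit identity between $\sigma(a_j)f$ and $\sigma(b_j)f$, whereas you deploy it only as the grading statement $\sigma(v)\mathcal M_0\subset\mathcal M_1$. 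Your version is more structural and recycles the already-established machinery of Section 6, at the cost of needing the creation/annihilation shift claim, which the paper never states and you would have to record as a lemma; it also handles the edge case $s\cdot\varphi=0$ cleanly, since Proposition 6.3 then directly forces $Js\cdot\varphi=0$.
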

\begin{proof} The proof is similar to Corollary 3.3.7 in \cite{Ha3}. Let $s=[p, v]\in  \Gamma Q=\tilde P_{Mp}^J(Q)\times_\rho \mathbb R^{2n}$. Then $Js=[ p, J_0 v]$, where  $J_0(v_1,v_2)=(-v_2,v_1)$ for $v=(v_1,v_2)\in\mathbb R^{2n}$,  $v_j\in \mathbb R^n$. Now let $\varphi=[p,f]\in Sp_0^J(\mathcal F)$, where $H_0 f =-{n\over 2}f$ for  $f\in L^2(\mathbb R^n)$.  That is, $f$ satisfies 
\begin{equation}\label{7-5}
\sum_{j=1}^n \Big( {\partial ^2 f\over \partial x_j^2 }-x_j^2 f + f\Big)=0.
\end{equation}
On the other hand,  a function $f$ satisfying 
\begin{equation}\label{7-6}
{\partial f\over\partial x_j} = -x_j f
\end{equation} 
is a solution of (\ref{7-5}).  Since the rank of $Sp_0^J(\mathcal F)$ is one,  a solution of (\ref{7-5}) is also the one of  (\ref{7-6}). 
Hence  (\ref{7-6}) yields   $\sigma(b_j)f = \sqrt{-1}\sigma(a_j) f$.  Since $J_0 a_j =b_j$ and $J_0 b_j =-a_j$, we have that $ \sigma(J_0 a_j )= \sqrt{-1}\sigma(a_j)$ and $\sigma(J_0 b_j)=\sqrt{-1}\sigma(b_j)$, and so $\sigma(J_0v) = \sqrt{-1}\sigma(v)$ for any $v$.  Hence from (\ref{3-2}),
\begin{equation*}
Js\cdot \varphi = [p, \sigma(J_0v) f] = [p, \sqrt{-1}\sigma(v) f] =\sqrt{-1}s\cdot\varphi,
\end{equation*}
which finishes the proof.
\end{proof}
\begin{lem} If $\nabla J=0$, then  for any $s\in   \Gamma Q$  and $\varphi\in \Gamma Sp^J_0(\mathcal F)$, 
\begin{align*}
\{P(Js)-\tilde P(s)\}\cdot\varphi ={\rm div}_\nabla(s^c)\varphi,
\end{align*}
where $s^c=s -\sqrt{-1}Js$.
\end{lem}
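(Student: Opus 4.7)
The plan is to exploit the key structural property of $Sp_0^J$-spinors provided by Proposition 7.2: for $\varphi\in\Gamma Sp_0^J(\mathcal F)$ and any $t\in\Gamma Q$ we have $Jt\cdot\varphi=\sqrt{-1}\,t\cdot\varphi$, equivalently the ``annihilation identity''
\[
(t+\sqrt{-1}Jt)\cdot\varphi=0 .
\]
Interpreting the decomposition $Q\otimes\mathbb C=Q^{1,0}\oplus Q^{0,1}$ defined by $J$, this just says that the $(0,1)$-component of any $t\in\Gamma Q\otimes\mathbb C$ annihilates $\varphi$, so $Sp_0^J(\mathcal F)$ plays the role of a Fock vacuum. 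Everything will ultimately reduce to this identity plus the Clifford commutation relation \eqref{3-4}.

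First I would simplify $P(Js)\cdot\varphi$. Using $\nabla J=0$, we have $\nabla_{Je_i}(Js)=J\nabla_{Je_i}s$, and then applying the annihilation identity to the rightmost Clifford action gives $J\nabla_{Je_i}s\cdot\varphi=\sqrt{-1}\nabla_{Je_i}s\cdot\varphi$. Combining with the definition of $\tilde P(s)$, I obtain
\[
\{P(Js)-\tilde P(s)\}\cdot\varphi=\sum_{i=1}^{2n}\bar e_i\cdot\tau_i\cdot\varphi, \qquad \tau_i:=\sqrt{-1}\nabla_{Je_i}s-\nabla_{e_i}s .
\]
Next I would reorganize this sum by pairing $i=k$ with $i=n+k$, using $\bar e_{n+k}=J\bar e_k$, $e_{n+k}=Je_k$, and therefore $Je_{n+k}=-e_k$. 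A direct collection of the two paired terms, together with the elementary identity $\sqrt{-1}\bar e_k-J\bar e_k=\sqrt{-1}(\bar e_k+\sqrt{-1}J\bar e_k)$, produces the factorization
\[
\sum_{i=1}^{2n}\bar e_i\cdot\tau_i\cdot\varphi=\sum_{k=1}^{n}(\bar e_k+\sqrt{-1}J\bar e_k)\cdot\tau_k\cdot\varphi .
\]

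Now the annihilation identity pays off. Using \eqref{3-4} extended $\mathbb C$-bilinearly to pull $(\bar e_k+\sqrt{-1}J\bar e_k)$ through $\tau_k$, the ``pass-through'' term $\tau_k\cdot(\bar e_k+\sqrt{-1}J\bar e_k)\cdot\varphi$ vanishes, and only the commutator remains:
\[
(\bar e_k+\sqrt{-1}J\bar e_k)\cdot\tau_k\cdot\varphi=-\sqrt{-1}\,\omega_Q(\bar e_k+\sqrt{-1}J\bar e_k,\,\tau_k)\,\varphi .
\]
So the left-hand side of the lemma is already a scalar multiple of $\varphi$.

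Finally I would expand the coefficient and match it with the right-hand side. Using the compatibility identities $\omega_Q(s,Jt)=g_Q(s,t)$, $\omega_Q(Js,t)=-g_Q(s,t)$, and $\omega_Q(Js,Jt)=\omega_Q(s,t)$ to convert every $\omega_Q$ into $g_Q$, the sum over $k$ becomes
\[
\sum_{k=1}^{n}\bigl\{g_Q(\bar e_k,\nabla_{e_k}s)-g_Q(\bar e_k,J\nabla_{Je_k}s)-\sqrt{-1}\,[\,g_Q(\bar e_k,J\nabla_{e_k}s)+g_Q(\bar e_k,\nabla_{Je_k}s)\,]\bigr\} .
\]
On the other hand, $s^c=s-\sqrt{-1}Js$ and $\nabla J=0$ give $\operatorname{div}_\nabla(s^c)=\operatorname{div}_\nabla s-\sqrt{-1}\operatorname{div}_\nabla(Js)$, which I would compute in the unitary frame $\{e_j,Je_j\}$ via $\operatorname{div}_\nabla(r)=\sum_{j=1}^{2n}g_Q(\nabla_{e_j}r,\bar e_j)$ (equivalent to \eqref{2-2}). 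The same identities reduce ${\rm div}_\nabla(s^c)$ to exactly the expression above, completing the proof.

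The main obstacle is the bookkeeping in the pairing step: one must correctly assemble the $i=k$ and $i=n+k$ contributions and recognize the common factor $(\bar e_k+\sqrt{-1}J\bar e_k)$; everything afterwards is driven by the annihilation identity together with the standard $\omega_Q$--$g_Q$ dictionary.
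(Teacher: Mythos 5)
Your proof is correct, and it takes a genuinely different route from the paper's. The paper starts from $P(Js)\cdot\varphi$ alone: it applies $\nabla J=0$ and Proposition~7.2 to write $\sqrt{-1}\sum_j\bar e_j\cdot\nabla_{Je_j}s\cdot\varphi$, re-indexes this to $-\sqrt{-1}\sum_j J\bar e_j\cdot\nabla_{e_j}s\cdot\varphi$, then performs two rounds of Clifford commutation interleaved with a second application of Proposition~7.2 to transform $J\bar e_j\cdot\varphi$ back into $\sqrt{-1}\bar e_j\cdot\varphi$, peeling off $\tilde P(s)\cdot\varphi$ and two $\omega_Q$-scalars that are then identified with $\operatorname{div}_\nabla(s)$ and $\operatorname{div}_\nabla(Js)$. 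You instead form the full difference $\{P(Js)-\tilde P(s)\}\cdot\varphi=\sum_i\bar e_i\cdot\tau_i\cdot\varphi$ first, pair the indices $k$ and $n+k$ (noting $\tau_{n+k}=\sqrt{-1}\,\tau_k$) so as to factor out the annihilation operator $\bar e_k+\sqrt{-1}J\bar e_k$, and then a single application of the commutation rule plus the vacuum property $(\bar e_k+\sqrt{-1}J\bar e_k)\cdot\varphi=0$ collapses each summand to the scalar $-\sqrt{-1}\,\omega_Q(\bar e_k+\sqrt{-1}J\bar e_k,\tau_k)\varphi$. Both routes use exactly the same ingredients, but yours makes the ``Fock vacuum'' mechanism transparent and requires only one commutation, at the cost of the bookkeeping in the pairing step; the paper's version avoids pairing but needs two commutations and two invocations of Proposition~7.2. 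Both conclude by the same $\omega_Q$--$g_Q$ dictionary to recognize $\operatorname{div}_\nabla(s^c)$.
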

\begin{proof}  Let $\varphi\in \Gamma Sp^J_0(\mathcal F)$.  From  $\nabla J=0$ and Proposition 7.2, we have
\begin{align*}
P(Js)\cdot\varphi&=\sum_{j=1}^{2n}\bar e_j\cdot \nabla_{Je_j}Js\cdot\varphi
=\sum_{j=1}^{2n} \bar e_j\cdot  J(\nabla_{Je_j}s)\cdot\varphi\\
&=\sum_{j=1}^{2n} \sqrt{-1}\bar e_j\cdot \nabla_{Je_j}s\cdot\varphi
=-\sqrt{-1}\sum_j J\bar e_j \cdot \nabla_{e_j}s\cdot\varphi\\
&=-\sqrt{-1}\sum_{j=1}^{2n} \nabla_{e_j}s\cdot J\bar e_j \cdot\varphi + \sum_{j=1}^{2n}\omega_Q(\nabla_{e_j}s,J\bar e_j)\varphi\\
&=\sum_{j=1}^{2n} \nabla_{e_j}s\cdot \bar e_j \cdot\varphi + \sum_{j=1}^{2n}\omega_Q(\nabla_{e_j}s,J\bar e_j)\varphi\\
&=\sum_{j=1}^{2n} \{\bar e_j\cdot \nabla_{e_j}s -\sqrt{-1}\omega_Q(\nabla_{e_j}s,J\bar e_j) +\omega_Q(\nabla_{e_j}s,J\bar e_j)\}\cdot\varphi\\
&=\tilde P(s)\cdot\varphi + {\rm div}_\nabla(s)\varphi -\sqrt{-1}{\rm div}_\nabla(Js)\varphi\\
&=\tilde P(s)\cdot\varphi + {\rm div}_\nabla(s-\sqrt{-1}Js)\varphi,
\end{align*}
which yields the proof.
\end{proof}
 Let ${\rm Sric}^\nabla$ and $r^\nabla$ be the {\it transversal symplectic Ricci tensor} and {\it transversal symplectic scalar curvature} of $\nabla$ on $\mathcal F$, which are defined by
\begin{align}
&{\rm Sric}^\nabla(s,t)=\sum_{j=1}^n\omega_Q(R^\nabla(v_j,w_j)s,t),\label{7-7}\\
&r^\nabla = \sum_{j=1}^{2n}{\rm Sric}^\nabla(\bar e_j,\bar e_j)=\frac12\sum_{i,j=1}^{2n}\omega_Q(R^\nabla(e_i,Je_i)\bar e_j,\bar e_j),\label{7-7-1}
\end{align}
respectively, where $\{e_j\}_{j=1,\cdots,2n}$ is a unitary basic  frame of $\mathcal F$.  

Note that if $\nabla$ is transverse Fedosov (that is, symplectic and torsion-free), then ${\rm Ric}^\nabla = {\rm Sric}^\nabla$ \cite{Ha3}, where ${\rm Ric}^\nabla$ is the transversal Ricci tensor on $Q$, that is, ${\rm Ric}^\nabla(X) = \sum_{j=1}^{2n} R^\nabla(X,e_j)e_j$ for any normal vector field $X\in\Gamma Q$.

\begin{prop} If $\nabla J=0$, then for any $\varphi \in \Gamma Sp_0^J(\mathcal F)$,
\begin{align}\label{7-8}
F(\varphi) =
 {\sqrt{-1}\over 4} r^\nabla \varphi.
\end{align}
\end{prop}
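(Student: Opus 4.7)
The plan is to reduce the Clifford-quadratic expression
\[
F(\varphi) = \sum_{i,j=1}^{2n} J\bar e_i\cdot\bar e_j\cdot R^S(e_i,e_j)\varphi
\]
to a pure scalar multiple of $\varphi$ by exploiting the vacuum identity $Js\cdot\varphi = \sqrt{-1}\,s\cdot\varphi$ of Proposition 7.2, and then matching the resulting scalar with $r^\nabla$ via the curvature symmetries in Lemma 3.11 and the definition (7.7-1).

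First I would rewrite (3.9) using the symplectic frame $\bar v_k = \bar e_k$, $\bar w_k = J\bar e_k$ built from the unitary basic frame. Since $\nabla J=0$ gives $R^\nabla(X,Y)J = JR^\nabla(X,Y)$,
\[
R^S(X,Y)\varphi = \frac{\sqrt{-1}}{2}\sum_{k=1}^{n}\bigl\{\bar e_k\cdot JR^\nabla(X,Y)\bar e_k - J\bar e_k\cdot R^\nabla(X,Y)\bar e_k\bigr\}\cdot\varphi .
\]
For $\varphi\in\Gamma Sp_0^J(\mathcal F)$, the first summand collapses to $\sqrt{-1}\,\bar e_k\cdot R^\nabla(X,Y)\bar e_k\cdot\varphi$ by Proposition 7.2 applied with $s = R^\nabla(X,Y)\bar e_k$. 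For the second summand I would push $J\bar e_k$ past $R^\nabla(X,Y)\bar e_k$ using the Weyl commutation (3.4) and then apply Proposition 7.2; the correction $-\sqrt{-1}\omega_Q(J\bar e_k,R^\nabla(X,Y)\bar e_k) = \sqrt{-1}g_Q(\bar e_k,R^\nabla(X,Y)\bar e_k)$ vanishes because $\nabla g_Q=0$ (Proposition 2.8) forces $g_Q(R^\nabla(X,Y)s,s)=0$. A last use of (3.4) then combines the two summands into the scalar expression
\[
R^S(e_i,e_j)\varphi = \frac{\sqrt{-1}}{2}\sum_{k=1}^{n}\omega_Q(\bar e_k,R^\nabla(e_i,e_j)\bar e_k)\,\varphi .
\]

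Inserting this back into $F(\varphi)$, I would evaluate $J\bar e_i\cdot\bar e_j\cdot\varphi$ by the same commutation-plus-Proposition-7.2 procedure, getting $J\bar e_i\cdot\bar e_j\cdot\varphi = \sqrt{-1}\,\bar e_j\cdot\bar e_i\cdot\varphi + \sqrt{-1}\,\delta_{ij}\varphi$; the $\delta_{ij}$-piece drops by antisymmetry of $R^\nabla(e_i,e_j)$. The remaining $\bar e_j\cdot\bar e_i$ is antisymmetrized against the antisymmetric-in-$(i,j)$ curvature coefficient by (3.4), converting $F(\varphi)$ into the fully scalar sum
\[
F(\varphi) = \frac{\sqrt{-1}}{4}\sum_{i,j=1}^{2n}\sum_{k=1}^{n}\omega_Q(\bar e_k,R^\nabla(e_i,e_j)\bar e_k)\,\omega_Q(\bar e_j,\bar e_i)\,\varphi .
\]
Since $\omega_Q(\bar e_j,\bar e_i)$ is the standard symplectic matrix in the unitary frame, the inner $(i,j)$-sum equals $-2\sum_{l=1}^n R^\nabla(e_l,Je_l)$. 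Combining this with the identity $\sum_{k=1}^{2n}\omega_Q(R^\nabla(X,Y)\bar e_k,\bar e_k) = 2\sum_{k=1}^{n}\omega_Q(R^\nabla(X,Y)\bar e_k,\bar e_k)$, which follows from (3.12), and comparing with the definition (7.7-1) of $r^\nabla$, gives exactly $F(\varphi)=\frac{\sqrt{-1}}{4}r^\nabla\varphi$.

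The main obstacle is the bookkeeping of signs and powers of $\sqrt{-1}$ through the repeated use of Proposition 7.2 and the Weyl commutation (3.4), where each $J$-swap produces both a Clifford factor and a scalar correction. The subsidiary technical point is the vanishing $g_Q(R^\nabla(X,Y)s,s)=0$, which kills all the potentially troublesome correction terms and rests on metric compatibility of $\nabla$—established in Proposition 2.8 from $\nabla\omega_Q=0$ and $\nabla J=0$.
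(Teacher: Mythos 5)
Your argument is correct and follows essentially the same route as the paper: reduce $F(\varphi)$ to a scalar multiple of $\varphi$ by repeated use of Proposition 7.2 and the Weyl commutation (3.4), then identify the scalar with $r^\nabla$ via the curvature symmetries (your use of $g_Q(R^\nabla s,s)=0$ is equivalent to the paper's appeal to Lemma 3.1). The only difference is bookkeeping order — you collapse $R^S(e_i,e_j)\varphi$ to a scalar before handling the outer $J\bar e_i\cdot\bar e_j$, which lets you avoid invoking Corollary 6.5, whereas the paper does the outer factor first; also the reference to ``Proposition 2.8'' should be Proposition 2.7.
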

\begin{proof}  Let $\{e_1,\cdots,e_{2n}\}$ be a unitary basic  frame and   $\varphi \in \Gamma Sp_0^J(\mathcal F)$.  Since $\nabla J=0$, by Corollary 6.5,  $R^S(e_i,e_j)\varphi \in\Gamma Sp_0^J(\mathcal F)$.
Since $\sum_{i,j=1}^{2n}\omega_Q(J\bar e_i,\bar e_j)R^S(e_i,e_j)\varphi =0$,  from Proposition 7.2, we have
\begin{align}\label{7-9}
F(\varphi)&=\sum_{i,j=1}^{2n}J\bar e_i\cdot\bar e_j\cdot R^S(e_i,e_j)\varphi\notag\\
&= \sum_{i,j=1}^{2n} \bar e_j\cdot J\bar e_i\cdot R^S(e_i,e_j)\varphi \notag\\
&=\sqrt{-1}\sum_{i,j=1}^{2n} \bar e_j\cdot \bar e_i \cdot R^S(e_i,e_j)\varphi\notag\\
&={1\over 2}\sum_{i,j=1}^{2n}\omega_Q(\bar e_i,\bar e_j)R^S(e_j,e_i)\varphi\notag\\
&=\frac12\sum_{i=1}^{2n} R^S(Je_i,e_i)\varphi.
\end{align}
Since  $[R^\nabla(X,Y),J]=0$ for any $X,Y\in \Gamma TM$,  from (\ref{3-8}) and Proposition 7.2,
\begin{align}\label{7-10}
R^S(X,Y)\varphi &= {\sqrt{-1}\over 2}\sum_{i=1}^{2n} \bar e_i \cdot R^\nabla (X,Y) J\bar e_i \cdot\varphi\notag\\
&={\sqrt{-1}\over 2} \sum_{i=1}^{2n} \bar e_i\cdot JR^\nabla(X,Y)\bar e_i \cdot\varphi\notag\\
&=-\frac12 \sum_{i=1}^{2n} \bar e_i\cdot R^\nabla(X,Y)\bar e_i\cdot\varphi\notag\\
&= \frac12 \sum_{i,j=1}^{2n} \omega_Q(R^\nabla(X,Y) J\bar e_i,\bar e_j)\bar e_i\cdot\bar e_j \cdot\varphi.
\end{align}
From Lemma 3.1,  we get
\begin{equation*}
\sum_{i,j=1}^{2n}\omega_Q(R^\nabla(X,Y)J\bar e_i,\bar e_j)\bar e_i\cdot\bar e_j\cdot \varphi = -\sum_{i,j=1}^{2n}\omega_Q(R^\nabla(X,Y)J\bar e_i,\bar e_j)\bar e_j\cdot \bar e_i\cdot\varphi,
\end{equation*}
 which implies
\begin{align}\label{7-11}
\sum_{i,j=1}^{2n} \omega_Q(R^\nabla(X,Y)J\bar e_i,\bar e_j)\bar e_i\cdot \bar e_j \cdot\varphi&= \frac12\sum_{i,j=1}^{2n}\omega_Q(R^\nabla(X,Y)J\bar e_i,\bar e_j)(\bar e_i\cdot \bar e_j - \bar e_j\cdot \bar e_i)\cdot\varphi\notag\\
&=-{\sqrt{-1}\over 2} \sum_{i,j=1}^{2n}\omega_Q(R^\nabla(X,Y)J\bar e_i,\bar e_j)\omega_Q(\bar e_i,\bar e_j)\varphi\notag\\
&= -{\sqrt{-1}\over 2} \sum_{i=1}^{2n}\omega_Q(R^\nabla(X,Y)\bar e_i,\bar e_i)\varphi.
\end{align}
From (\ref{7-10}) and (\ref{7-11}), we have
\begin{equation*}
R^S(X,Y)\varphi = -{\sqrt{-1}\over 4} \sum_{i=1}^{2n} \omega_Q(R^\nabla(X,Y)\bar e_i,\bar e_i)\varphi,
\end{equation*}
which implies 
\begin{equation}\label{7-12}
\sum_{i=1}^{2n}R^S(Je_i,e_i)\varphi = {\sqrt{-1}\over 4}\sum_{i,j=1}^{2n} \omega_Q(R^\nabla(e_i,Je_i)\bar e_j,\bar e_j)\varphi={\sqrt{-1}\over 2}r^\nabla \varphi.
\end{equation}
Hence (\ref{7-8}) follows from (\ref{7-9}) and (\ref{7-12}).
\end{proof}
\begin{thm}    If $\nabla J=0$, then for any $\varphi \in \Gamma Sp_0 ^J(\mathcal F)$,  we have
\begin{align*}
\mathcal P^0_{\rm tr}\varphi  = \nabla_{\rm tr}^*\nabla_{\rm tr}\varphi-{1\over 4}(r^\nabla +|\tau_\nabla+\bar\kappa^\sharp|^2)\varphi
+{\sqrt{-1}\over 2}{\rm div}_\nabla(\tau_\nabla+\bar\kappa^\sharp)^c\cdot\varphi+\sqrt{-1}\nabla_{\tau_\nabla}\varphi.
\end{align*}
\end{thm}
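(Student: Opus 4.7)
The plan is to specialize the general Weitzenböck formula of Theorem 5.6 to spinors $\varphi\in\Gamma Sp_0^J(\mathcal F)$, using the three structural facts already established for $\nabla J=0$: Proposition 7.4 which computes the curvature endomorphism $F(\varphi)$ as $\tfrac{\sqrt{-1}}{4}r^\nabla\varphi$, Lemma 7.3 which rewrites $\{P(Js)-\tilde P(s)\}\cdot\varphi$ as the scalar ${\rm div}_\nabla(s^c)\varphi$, and Proposition 7.2 together with Corollary 6.5 which lets us replace Clifford multiplication by $J(\cdot)$ with $\sqrt{-1}$ times multiplication by $(\cdot)$ on any section of $Sp_0^J(\mathcal F)$.

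First, I would write down Theorem 5.6 and immediately observe that the hypothesis $\nabla J=0$ forces $P(J)=0$, so the term $\sqrt{-1}\sum_i P(J)(J\bar e_i)\cdot\nabla_{e_i}\varphi$ disappears. Then I would insert Proposition 7.4 to turn $\sqrt{-1}F(\varphi)$ into $-\tfrac14 r^\nabla\varphi$, and apply Lemma 7.3 with $s=\bar\kappa^\sharp+\tau_\nabla$ to rewrite
$\tfrac{\sqrt{-1}}{2}\{P(J(\bar\kappa^\sharp+\tau_\nabla))-\tilde P(\bar\kappa^\sharp+\tau_\nabla)\}\cdot\varphi = \tfrac{\sqrt{-1}}{2}{\rm div}_\nabla((\bar\kappa^\sharp+\tau_\nabla)^c)\varphi.$ After these substitutions the only remaining term to massage is the torsion sum $\sqrt{-1}\sum_{i,j}\bar e_i\cdot J\bar e_j\cdot\nabla_{T_\nabla(e_i,e_j)}\varphi$.

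For that term I would proceed as follows. Since $\nabla J=0$, Corollary 6.5 says $\nabla_{T_\nabla(e_i,e_j)}\varphi\in\Gamma Sp_0^J(\mathcal F)$, so Proposition 7.2 applies and gives $J\bar e_j\cdot\nabla_{T_\nabla(e_i,e_j)}\varphi=\sqrt{-1}\,\bar e_j\cdot\nabla_{T_\nabla(e_i,e_j)}\varphi$. The torsion term thus reduces to $-\sum_{i,j}\bar e_i\cdot\bar e_j\cdot\nabla_{T_\nabla(e_i,e_j)}\varphi$. Using the antisymmetry $T_\nabla(e_j,e_i)=-T_\nabla(e_i,e_j)$ and the commutation relation (\ref{3-4}) in the form $\bar e_i\cdot\bar e_j-\bar e_j\cdot\bar e_i=-\sqrt{-1}\omega_Q(\bar e_i,\bar e_j)$, the symmetrization collapses this double sum to $\tfrac{\sqrt{-1}}{2}\sum_{i,j}\omega_Q(\bar e_i,\bar e_j)\nabla_{T_\nabla(e_i,e_j)}\varphi$. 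Finally, since $\{e_i,Je_i\}_{i=1}^n$ is a transversely symplectic frame because of the relation $g_Q(\cdot,\cdot)=\omega_Q(\cdot,J\cdot)$, one computes $\sum_{i,j}\omega_Q(\bar e_i,\bar e_j)\,T_\nabla(e_i,e_j)=2\sum_{i=1}^n T_\nabla(e_i,Je_i)=2\tau_\nabla$, so the torsion term becomes exactly $\sqrt{-1}\,\nabla_{\tau_\nabla}\varphi$.

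Collecting all four contributions yields the claimed formula. The step I expect to require the most care is the torsion reduction, since it combines Proposition 7.2 (available only because $\nabla_{T_\nabla(e_i,e_j)}\varphi$ again lies in $Sp_0^J(\mathcal F)$ by Corollary 6.5), the Clifford commutation relation, and the identification of $\sum_{i,j}\omega_Q(\bar e_i,\bar e_j)T_\nabla(e_i,e_j)$ with $2\tau_\nabla$ via the unitary-to-symplectic frame conversion; everything else is a direct substitution into Theorem 5.6.
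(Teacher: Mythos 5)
Your proposal is correct and follows the same route as the paper's proof: the paper also specializes Theorem 5.6 by killing the $P(J)$ term via $\nabla J=0$, substituting Proposition 7.4 and Lemma 7.3, and reducing the torsion double sum through Corollary 6.5, Proposition 7.2, the commutation relation (\ref{3-4}), and the identity $\tfrac12\sum_{i,j}\omega_Q(\bar e_i,\bar e_j)T_\nabla(e_i,e_j)=\tau_\nabla$. The only superficial difference is that you fold the overall $\sqrt{-1}$ of the Weitzenb\"ock torsion term in at the start, whereas the paper carries it along and produces $\nabla_{\tau_\nabla}\varphi$ from the inner sum alone; the arithmetic agrees.
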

\begin{proof}  Let  $\varphi\in \Gamma Sp_0^J(\mathcal F)$. Then from Corollary 6.5 and Proposition 7.2, we get
\begin{align*}
\sum_{i,j=1}^{2n} \bar e_i\cdot J\bar e_j \cdot\nabla_{T_\nabla(e_i,e_j)}\varphi 
&= \sqrt{-1}\sum_{i,j=1}^{2n}\bar e_i\cdot \bar e_j \cdot \nabla_{T_\nabla(e_i,e_j)}\varphi \\
&=\frac12\sum_{i,j=1}^{2n}\omega_Q(\bar e_i,\bar e_j) \nabla_{T_\nabla(e_i,e_j)}\varphi\\
&=\frac12 \sum_{i=1}^{2n} \nabla_{T_\nabla (e_i,Je_i)}\varphi\\
&=\nabla_{\tau_\nabla}\varphi.
\end{align*}
Since $\sum_{i=1}^{2n} T_\nabla(e_i,Je_i)  = 2\tau_\nabla$, the last equality in the above holds. From Theorem 5.6, Lemma 7.3  and Proposition 7.4, the proof is completed.
\end{proof}

\begin{cor} Let  $\nabla$ be a transverse Fedosov connection on $(M,\mathcal F,\omega)$ with a transversely metaplectic structure. If $\nabla J=0$, then for any $\varphi\in \Gamma Sp_0^J(\mathcal F)$
\begin{align}\label{7-13}
\mathcal P_{\rm tr}^0\phi & = \nabla_{\rm tr}^*\nabla_{\rm tr}\varphi-{1\over 4}(r^\nabla +|\bar\kappa^\sharp|^2)\varphi+{\sqrt{-1}\over 2}{\rm div}_\nabla(\bar\kappa^\sharp)^c\cdot\varphi.
\end{align}
In addition, if $\mathcal F$ is minimal, then
\begin{equation}\label{7-14}
\mathcal P_{\rm tr}\varphi= \nabla_{\rm tr}^*\nabla_{\rm tr}\varphi - \frac14 r^\nabla\varphi.
\end{equation}
\end{cor}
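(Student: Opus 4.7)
The plan is to obtain both identities by direct specialization of Theorem 7.5, using only the defining properties of a transverse Fedosov connection together with, in the second statement, minimality of $\mathcal F$.

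First I would recall that by definition a transverse Fedosov connection satisfies $\nabla\omega_Q=0$ and $T_\nabla=0$. Since the torsion vector field is built from the torsion tensor via $\tau_\nabla=\sum_{i=1}^n T_\nabla(v_i,w_i)$, vanishing of $T_\nabla$ immediately forces $\tau_\nabla=0$. I would then substitute $\tau_\nabla=0$ into the formula of Theorem 7.5. The term $\sqrt{-1}\nabla_{\tau_\nabla}\varphi$ disappears, the squared norm $|\tau_\nabla+\bar\kappa^\sharp|^2$ collapses to $|\bar\kappa^\sharp|^2$, and the argument of ${\rm div}_\nabla$ reduces from $(\tau_\nabla+\bar\kappa^\sharp)^c$ to $(\bar\kappa^\sharp)^c$. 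What remains is exactly equation \eqref{7-13}.

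For the second statement, minimality of $\mathcal F$ means $\kappa=0$, so $\bar\kappa^\sharp=\flat^{-1}(\kappa)=0$. Substituting this into \eqref{7-13} makes the term $|\bar\kappa^\sharp|^2\varphi$ vanish and the term ${\rm div}_\nabla(\bar\kappa^\sharp)^c\cdot\varphi$ vanish, leaving $\mathcal P^0_{\rm tr}\varphi = \nabla_{\rm tr}^*\nabla_{\rm tr}\varphi-\tfrac14 r^\nabla\varphi$, which is \eqref{7-14}. I do not anticipate any real obstacle, since the two hypotheses of this corollary are precisely the ones that kill the torsion- and mean-curvature-dependent terms in the Weitzenböck formula of Theorem 7.5.
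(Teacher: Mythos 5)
Your proof is correct and follows essentially the same route as the paper: Theorem 7.5 already assumes $\nabla J=0$, so the only thing the Fedosov hypothesis adds is $T_\nabla=0$, hence $\tau_\nabla=0$, and substitution kills the $\nabla_{\tau_\nabla}$ term and simplifies the norm and divergence terms to give \eqref{7-13}; minimality then gives $\kappa=0$, so $\bar\kappa^\sharp=0$, and \eqref{7-14} follows. (The paper's one-line proof also mentions $P(J)=0$, but as you implicitly note this is already built into Theorem 7.5 via the hypothesis $\nabla J=0$, so your version is if anything slightly cleaner.)
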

\begin{proof} Since $\tau_\nabla=0$ and $P(J)=0$,  the proof follows from Theorem 7.5.
 \end{proof}
\begin{cor}  Let  $\nabla$ be a transversely symplectic connection such that $\nabla J=0$ on $(M,\mathcal F_\xi,\omega)$  with a  transversely metaplectic structure.  Then for any $\varphi \in \Gamma Sp_0^J(\mathcal F)$
\begin{align*}
\mathcal P_{\rm tr}^0\phi & = \nabla_{\rm tr}^*\nabla_{\rm tr}\varphi-{1\over 4}(r^\nabla +|\tau_\nabla|^2)\varphi+{\sqrt{-1}\over 2}{\rm div}_\nabla(\tau_\nabla^c)\varphi+\sqrt{-1}\nabla_{\tau_\nabla}\varphi.
\end{align*}
In addition, if $\nabla$ is transverse Fedosov,  then
\begin{align*}
\mathcal P_{\rm tr}^0\varphi&= \nabla_{\rm tr}^*\nabla_{\rm tr}\varphi - \frac14 r^\nabla\varphi.
\end{align*}
 \end{cor}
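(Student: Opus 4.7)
The plan is to deduce this corollary directly from \cref{thm}~7.5 (the Weitzenb\"ock formula for $\mathcal P_{\rm tr}^0$ on $\Gamma Sp_0^J(\mathcal F)$ when $\nabla J = 0$) by specializing to the geometry of a contact or cosymplectic flow. The key observation is contained in \cref{rem}~2.6: the flow $\mathcal F_\xi$ generated by the Reeb vector field on a contact manifold (with $\omega = d\alpha$) or on an almost cosymplectic manifold (with $\omega = \Phi$) is automatically minimal, so its mean curvature form satisfies $\kappa = 0$. Consequently $\bar\kappa^\sharp = \flat^{-1}(\kappa) = 0$ on $(M,\mathcal F_\xi,\omega)$.

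First, I would invoke \cref{thm}~7.5, which under the standing hypothesis $\nabla J = 0$ gives, for any $\varphi \in \Gamma Sp_0^J(\mathcal F)$,
\begin{equation*}
\mathcal P^0_{\rm tr}\varphi  = \nabla_{\rm tr}^*\nabla_{\rm tr}\varphi-\tfrac{1}{4}\bigl(r^\nabla +|\tau_\nabla+\bar\kappa^\sharp|^2\bigr)\varphi
+\tfrac{\sqrt{-1}}{2}\,{\rm div}_\nabla(\tau_\nabla+\bar\kappa^\sharp)^c\cdot\varphi+\sqrt{-1}\,\nabla_{\tau_\nabla}\varphi.
\end{equation*}
Then I would substitute $\bar\kappa^\sharp = 0$. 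Every occurrence of $\bar\kappa^\sharp$ drops out, yielding
\begin{equation*}
\mathcal P^0_{\rm tr}\varphi  = \nabla_{\rm tr}^*\nabla_{\rm tr}\varphi-\tfrac{1}{4}\bigl(r^\nabla +|\tau_\nabla|^2\bigr)\varphi
+\tfrac{\sqrt{-1}}{2}\,{\rm div}_\nabla(\tau_\nabla^c)\,\varphi+\sqrt{-1}\,\nabla_{\tau_\nabla}\varphi,
\end{equation*}
which is the first assertion. (Note that the term ${\rm div}_\nabla(\tau_\nabla^c)\cdot\varphi$ is scalar multiplication by the basic function ${\rm div}_\nabla(\tau_\nabla^c)$, since $\tau_\nabla^c \in \Gamma Q$ enters through the divergence.)

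For the second assertion, I would further specialize to the case that $\nabla$ is transverse Fedosov. By definition of a transverse Fedosov connection, $T_\nabla = 0$, whence the torsion vector field $\tau_\nabla = \sum_i T_\nabla(v_i,w_i) = 0$. Thus $|\tau_\nabla|^2 = 0$, $\tau_\nabla^c = 0$ (so ${\rm div}_\nabla(\tau_\nabla^c) = 0$), and the final term $\nabla_{\tau_\nabla}\varphi$ vanishes identically. The formula collapses to
\begin{equation*}
\mathcal P_{\rm tr}^0\varphi = \nabla_{\rm tr}^*\nabla_{\rm tr}\varphi - \tfrac{1}{4} r^\nabla\varphi,
\end{equation*}
as claimed.

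There is no real obstacle here, since the corollary is a straightforward specialization of \cref{thm}~7.5. The only point that requires care is confirming that the minimality of the Reeb flow ($\kappa = 0$) is legitimately imported from \cref{rem}~2.6 (which cites \cite{AJ,Pa}), and that the Fedosov hypothesis forces $\tau_\nabla = 0$ and not merely $T_\nabla(v_i,w_i) = 0$ on a particular frame; this is trivial from the definition of $\tau_\nabla$. Once these two reductions are recorded, the result is immediate.
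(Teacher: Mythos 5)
Your proof is correct and is exactly the specialization the paper intends: Corollary 7.7 is left without an explicit proof precisely because it follows by substituting $\bar\kappa^\sharp=0$ (minimality of the Reeb flow, from Remark 2.6) and then $\tau_\nabla=0$ (the transverse Fedosov condition) into Theorem 7.5. Your side remarks about ${\rm div}_\nabla(\tau_\nabla^c)\varphi$ being scalar multiplication and about $T_\nabla\equiv 0$ forcing $\tau_\nabla=0$ are both accurate and worth recording.
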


\begin{thm}  Let $\nabla$ be a transverse Fedosov connection such that $\nabla J=0$ on a closed  $(M,\mathcal F,\omega)$ with a transversely metaplectic structure.  If $\mathcal F$ is minimal and the transversal symplectic scalar curvature is negative, then $\ker \mathcal P_{\rm tr}^0 =\{0\}$. 
\end{thm}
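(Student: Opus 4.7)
The plan is to combine the Weitzenb\"ock-type identity \eqref{7-14} from Corollary 7.6 with a standard positivity argument. Suppose $\varphi \in \ker \mathcal P_{\rm tr}^0$. Since $\nabla$ is a transverse Fedosov connection with $\nabla J = 0$ and $\mathcal F$ is minimal (so $\bar\kappa^\sharp = 0$), Corollary 7.6 yields
\begin{equation*}
0 = \mathcal P_{\rm tr}^0 \varphi = \nabla_{\rm tr}^*\nabla_{\rm tr}\varphi - \tfrac{1}{4} r^\nabla \varphi.
\end{equation*}

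Next, I would take the canonical Hermitian pairing with $\varphi$ and integrate against the volume form $\mu_M$. By the construction of $\nabla_{\rm tr}^*$ as the formal adjoint of $\nabla_{\rm tr}$ (which is justified via the transversal divergence theorem, Theorem 2.3, together with minimality of $\mathcal F$), the first term produces the Dirichlet integral:
\begin{equation*}
\int_M \langle \nabla_{\rm tr}^*\nabla_{\rm tr}\varphi, \varphi\rangle \, \mu_M \;=\; \int_M |\nabla_{\rm tr}\varphi|^2 \, \mu_M \;\geq\; 0.
\end{equation*}
Consequently,
\begin{equation*}
0 \;=\; \int_M |\nabla_{\rm tr}\varphi|^2 \, \mu_M \;-\; \tfrac{1}{4}\int_M r^\nabla |\varphi|^2 \, \mu_M.
\end{equation*}

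The hypothesis $r^\nabla < 0$ makes the second integrand pointwise nonnegative, so both integrals on the right-hand side are nonnegative and vanish simultaneously. The vanishing of $-\tfrac{1}{4}\int_M r^\nabla |\varphi|^2 \, \mu_M$ combined with the strict negativity of $r^\nabla$ forces $|\varphi|^2 \equiv 0$ on $M$, hence $\varphi = 0$. This shows $\ker \mathcal P_{\rm tr}^0 = \{0\}$.

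There is essentially no obstacle: the real content has already been assembled in Corollary 7.6. The only matters to verify carefully are that $r^\nabla$ is a real-valued basic function (which follows from the reality of $\omega_Q$ and $R^\nabla$, together with the symmetry \eqref{3-11}), that $\mathcal P_{\rm tr}^0$ is genuinely formally self-adjoint on $\Gamma Sp_0^J(\mathcal F)$ (inherited from the self-adjointness of $\mathcal P_{\rm tr}$ established in Section 4 together with Proposition 7.1, which guarantees $\mathcal P_{\rm tr}$ preserves $\Gamma Sp_0^J(\mathcal F)$), and that the formal adjoint identity for $\nabla_{\rm tr}$ is free of boundary contributions under the minimality hypothesis, which was recorded in Lemma 5.2.
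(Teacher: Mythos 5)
Your proof is correct and follows essentially the same path as the paper's own proof of Theorem 7.8: apply the Weitzenb\"ock formula \eqref{7-14} to $\varphi\in\ker\mathcal P_{\rm tr}^0$, pair with $\varphi$, integrate, and use $r^\nabla<0$ together with positivity of the Dirichlet term to conclude $\varphi=0$. The additional justifications you flag (reality of $r^\nabla$, self-adjointness of $\mathcal P_{\rm tr}^0$ via Proposition 7.1, and vanishing of boundary contributions from Lemma 5.2 under minimality and the Fedosov condition) are left implicit in the paper but are correctly identified.
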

\begin{proof}  Let $\varphi\in\ker \mathcal P_{\rm tr}^0$. From  (\ref{7-14}), by integrating 
\begin{equation*}
\int_M |\nabla_{\rm tr}\varphi|^2 -\frac14\int_M r^\nabla |\varphi|^2 =0.
  \end{equation*}
 By the assumption of the symplectic scalar curvature, we have $\varphi=0$. 
 \end{proof}
\section{Transversely symplectic Dirac operators on transverse K\"ahler foliations}

Let $(M,\mathcal F,J,g_Q)$ be a transverse K\"ahler foliation with  a holonomy invariant transverse complex structure $J$ and transverse Hermitian metric $g_Q$ on $M$. Let $\omega_Q$ be a basic K\"ahler 2-form associated to $g_Q$. It is well known that a transverse K\"ahler foliation is a transversely symplectic foliation with the transverse symplectic form $\omega_Q$. Trivially, the transverse Levi-Civita connection $\nabla$ is the transversely Fedosov connection with $\nabla J=0$. That is,  a transverse K\"ahler foliation is transverse Fedosov.  
Throughout this section, we fix the transverse Levi-Civita connection and  a transversely metaplectic structure $\tilde P_{Mp} (Q)$.  Let $\{e_j\} (j=1,\cdots,2n)$ be a local orthonormal basic frame on $Q=T\mathcal F^\perp$.

\begin{prop}  Let $(M,\mathcal F,J,g_Q)$ be a transverse K\"ahler foliation on a closed manifold $M$. Then the operator $\mathcal P_{\rm tr}$ is  formally self-adjoint.
\end{prop}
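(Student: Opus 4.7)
The plan is to derive this statement as a direct corollary of Theorem 4.2 together with the general fact that the commutator of two formally self-adjoint operators becomes self-adjoint after multiplying by $\sqrt{-1}$. First I would verify that the hypotheses of Theorem 4.2 are met: on a transverse K\"ahler foliation the transverse Levi-Civita connection $\nabla$ is transverse Fedosov (torsion-free and transversely symplectic) and by definition satisfies $\nabla J=0$. Hence Theorem 4.2 applies and gives that both $D_{\rm tr}$ and $\tilde D_{\rm tr}$ are formally self-adjoint on the closed manifold $M$.

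Next I would carry out the adjoint computation directly from the definition $\mathcal P_{\rm tr}=\sqrt{-1}[\tilde D_{\rm tr},D_{\rm tr}]$. Using the sesquilinearity of $\langle\cdot,\cdot\rangle$ (which picks up a complex conjugate on the second argument) and the self-adjointness of $D_{\rm tr}$ and $\tilde D_{\rm tr}$:
\begin{align*}
\langle \mathcal P_{\rm tr}\varphi,\psi\rangle
&=\sqrt{-1}\langle \tilde D_{\rm tr}D_{\rm tr}\varphi,\psi\rangle-\sqrt{-1}\langle D_{\rm tr}\tilde D_{\rm tr}\varphi,\psi\rangle\\
&=\sqrt{-1}\langle D_{\rm tr}\varphi,\tilde D_{\rm tr}\psi\rangle-\sqrt{-1}\langle\tilde D_{\rm tr}\varphi,D_{\rm tr}\psi\rangle\\
&=\sqrt{-1}\langle\varphi,D_{\rm tr}\tilde D_{\rm tr}\psi\rangle-\sqrt{-1}\langle\varphi,\tilde D_{\rm tr}D_{\rm tr}\psi\rangle\\
&=\langle\varphi,-\sqrt{-1}[\tilde D_{\rm tr},D_{\rm tr}]\psi\rangle
=\langle\varphi,\mathcal P_{\rm tr}\psi\rangle,
\end{align*}
where in the last step one uses $\langle\varphi,\sqrt{-1}\,\eta\rangle=-\sqrt{-1}\langle\varphi,\eta\rangle$. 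This shows $\mathcal P_{\rm tr}^{*}=\mathcal P_{\rm tr}$. Since every algebraic step is routine, there is no substantive obstacle; the only thing requiring care is the sign that comes out of the conjugate-linearity of the Hermitian product in the second slot, which is exactly what compensates the minus sign produced by $[A,B]^{*}=-[A,B]$ for self-adjoint $A,B$.
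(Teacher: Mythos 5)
Your argument is the same as the paper's: both reduce the claim to Theorem 4.2 (which gives formal self-adjointness of $D_{\rm tr}$ and $\tilde D_{\rm tr}$ because the transverse Levi-Civita connection on a transverse K\"ahler foliation satisfies $\nabla J=0$) and then use the elementary fact that $\sqrt{-1}$ times the commutator of two formally self-adjoint operators is again formally self-adjoint. There is a small sign slip in your fourth display line --- after pulling the scalar inside, the intermediate expression should read $\langle\varphi,\sqrt{-1}[\tilde D_{\rm tr},D_{\rm tr}]\psi\rangle$ rather than $\langle\varphi,-\sqrt{-1}[\tilde D_{\rm tr},D_{\rm tr}]\psi\rangle$, since $\sqrt{-1}\langle\varphi,-[\tilde D_{\rm tr},D_{\rm tr}]\psi\rangle=-\sqrt{-1}\langle\varphi,[\tilde D_{\rm tr},D_{\rm tr}]\psi\rangle=\langle\varphi,\sqrt{-1}[\tilde D_{\rm tr},D_{\rm tr}]\psi\rangle$ --- but your final identity $\langle\varphi,\mathcal P_{\rm tr}\psi\rangle$ and the overall conclusion are correct.
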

\begin{proof} Since the transverse Levi-Civita connection $\nabla$ satisfies $\nabla J=0$, 
 from Theorem 4.2, $D_{\rm tr}$ and $\tilde D_{\rm tr}$ are formally self-adjoint.  So $\mathcal P_{\rm tr}=\sqrt{-1}[\tilde D_{\rm tr},D_{\rm tr}]$ is formally self-adjoint.
\end{proof}
\begin{lem}  On a transverse K\"ahler foliation, we get
\begin{equation}\label{8-1}
P(J\kappa^\sharp)-\tilde P(\kappa^\sharp) =P(\kappa^{\sharp_g})+\tilde P(J\kappa^{\sharp_g}).
\end{equation}
In particular, if the mean curvature vector $\kappa$ of $\mathcal F$ is automorphic, i.e., $J\nabla_Y \kappa^{\sharp_g} = \nabla_{JY}\kappa^{\sharp_g}$ for any $Y\in\mathfrak X_B(\mathcal F)$,  then
\begin{align}\label{8-2}
2P(\kappa^{\sharp_g})=2\tilde P(J\kappa^{\sharp_g}),
\end{align}
where $\kappa(X) = g_Q(\kappa^{\sharp_g},X)$ for any $X\in \Gamma Q$.
\end{lem}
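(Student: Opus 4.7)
The proof naturally splits along the lemma's two assertions, and the key observation is the algebraic relation between the two musical isomorphisms $\sharp$ (via $\omega_Q$) and $\sharp_g$ (via $g_Q$) on a transverse Kähler foliation.

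First I would record the relation $J\kappa^\sharp=\kappa^{\sharp_g}$, equivalently $\kappa^\sharp=-J\kappa^{\sharp_g}$. This falls out of the compatibility $g_Q(s,t)=\omega_Q(s,Jt)$ together with the definitions $\kappa(X)=\omega_Q(\kappa^\sharp,X)=g_Q(\kappa^{\sharp_g},X)$. Indeed, for any $X\in\Gamma Q$,
\[
\omega_Q(\kappa^\sharp,X)=g_Q(\kappa^{\sharp_g},X)=\omega_Q(\kappa^{\sharp_g},JX)=-\omega_Q(J\kappa^{\sharp_g},X),
\]
where the last step uses $\omega_Q(s,Jt)=-\omega_Q(Js,t)$ (itself a consequence of $J^2=-\mathrm{id}$ together with $\omega_Q$-compatibility). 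Nondegeneracy of $\omega_Q$ then gives the claimed identity.

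Given this, the first identity (\ref{8-1}) is a one-line consequence of the $\mathbb{R}$-linearity of $P$ and $\tilde P$ as defined in (\ref{5-2}):
\[
P(J\kappa^\sharp)-\tilde P(\kappa^\sharp)=P(\kappa^{\sharp_g})-\tilde P(-J\kappa^{\sharp_g})=P(\kappa^{\sharp_g})+\tilde P(J\kappa^{\sharp_g}).
\]

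For the second assertion, assuming $\kappa$ is automorphic, I would expand $\tilde P(J\kappa^{\sharp_g})$ using the definition in (\ref{5-2}) and the fact that $\nabla J=0$ on a transverse Kähler foliation:
\[
\tilde P(J\kappa^{\sharp_g})=\sum_{i=1}^{2n}\bar e_i\cdot\nabla_{e_i}(J\kappa^{\sharp_g})=\sum_{i=1}^{2n}\bar e_i\cdot J\nabla_{e_i}\kappa^{\sharp_g}.
\]
Since the unitary basic frame $\{e_i\}\subset\mathfrak X_B(\mathcal F)$ consists of basic vector fields, the automorphic hypothesis applied with $Y=e_i$ yields $J\nabla_{e_i}\kappa^{\sharp_g}=\nabla_{Je_i}\kappa^{\sharp_g}$, and therefore
\[
\tilde P(J\kappa^{\sharp_g})=\sum_{i=1}^{2n}\bar e_i\cdot\nabla_{Je_i}\kappa^{\sharp_g}=P(\kappa^{\sharp_g}).
\]
Doubling gives (\ref{8-2}). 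The only subtlety, and the step I would double-check most carefully, is the sign in the identification $J\kappa^\sharp=\kappa^{\sharp_g}$; every other step is bookkeeping with the definitions of $P$, $\tilde P$ and the transverse Kähler conditions $\nabla g_Q=0$, $\nabla\omega_Q=0$, $\nabla J=0$.
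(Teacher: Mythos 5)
Your proof is correct and follows essentially the same route as the paper: establish $J\kappa^\sharp=\kappa^{\sharp_g}$ from the two expressions for $\kappa$ (the paper compares $\kappa=g_Q(J\kappa^\sharp,\cdot)$ with $\kappa=g_Q(\kappa^{\sharp_g},\cdot)$ directly, whereas you route through $\omega_Q$, but that is a cosmetic difference), substitute into the left side to get \eqref{8-1}, and then for \eqref{8-2} expand $\tilde P(J\kappa^{\sharp_g})$ using $\nabla J=0$ followed by the automorphic hypothesis to obtain $P(\kappa^{\sharp_g})$.
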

\begin{proof}   Since $\kappa = i (\kappa^\sharp)\omega=g_Q(J\kappa^\sharp,\cdot)$, we know that  $J\kappa^\sharp = \kappa^{\sharp_g}$.  Hence the proof of (\ref{8-1}) is proved.
On the other hand, the condition of $\kappa$ yields
\begin{align*}
\tilde P(J\kappa^{\sharp_g}) &= \sum_{j=1}^{2n} e_j \cdot\nabla_{e_j}J\kappa^{\sharp_g} = \sum_{j=1}^{2n}e_j\cdot J\nabla_{e_j}\kappa^{\sharp_g} = \sum_{j=1}^{2n}e_j\cdot\nabla_{Je_j}\kappa^{\sharp_g} = P(\kappa^{\sharp_g}),
\end{align*}
which proves (\ref{8-2}).
\end{proof}

\begin{thm} On a transverse K\"ahler foliation,  the following holds:
for any $\varphi \in \Gamma Sp(\mathcal F)$
\begin{equation}\label{8-3}
\mathcal P_{\rm tr}\varphi= \nabla_{\rm tr}^*\nabla_{\rm tr}\varphi+\sqrt{-1}F(\varphi)-\frac14|\kappa|^2\varphi+{\sqrt{-1}\over 2}\{P(\kappa^{\sharp_g})+\tilde P(J\kappa^{\sharp_g})\}\cdot\varphi.
\end{equation}
In particular, for any $\varphi\in Sp_0^J(\mathcal F)$
\begin{equation}\label{8-4}
\mathcal P_{\rm tr}^0\varphi= \nabla_{\rm tr}^*\nabla_{\rm tr}\varphi-\frac14(r^\nabla +|\kappa|^2)\varphi+{1\over 2}{\rm div}_\nabla((\kappa^{\sharp_g})^c)\varphi,
\end{equation}
where $r^\nabla$ is the transversal symplectic scalar curvature of $\nabla$.
\end{thm}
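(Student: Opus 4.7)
The plan is to derive both formulas by specializing general Weitzenböck-type identities already established in the paper. The key observation is that on a transverse Kähler foliation $(M,\mathcal F,J,g_Q)$, the transverse Levi-Civita connection $\nabla$ is a transverse Fedosov connection (torsion-free and symplectic) that additionally satisfies $\nabla J = 0$. Therefore Corollary 5.7 and Corollary 7.6 both apply directly, and the task reduces to rewriting the curvature/mean-curvature terms in the form stated in the theorem.

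For the first identity (\ref{8-3}), I would start from equation (\ref{5-9}) in Corollary 5.7:
\[
\mathcal P_{\rm tr}\varphi = \nabla_{\rm tr}^*\nabla_{\rm tr}\varphi+\sqrt{-1}F(\varphi)-\tfrac14|\bar\kappa^\sharp|^2\varphi+\tfrac{\sqrt{-1}}{2}\{P(J\bar\kappa^\sharp)-\tilde P(\bar\kappa^\sharp)\}\cdot\varphi.
\]
Two rewrites are needed. First, Lemma 8.2 applied to $\kappa^\sharp$ gives exactly $P(J\bar\kappa^\sharp)-\tilde P(\bar\kappa^\sharp) = P(\kappa^{\sharp_g})+\tilde P(J\kappa^{\sharp_g})$. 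Second, using $\kappa^{\sharp_g} = J\bar\kappa^\sharp$ together with the $J$-invariance of $g_Q$, one verifies $|\bar\kappa^\sharp|^2 = g_Q(J\bar\kappa^\sharp,J\bar\kappa^\sharp) = |\kappa^{\sharp_g}|^2 = |\kappa|^2$, where the last equality uses the musical isomorphism with respect to $g_Q$. Substituting these two identities into (\ref{5-9}) yields (\ref{8-3}).

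For the second identity (\ref{8-4}), I would begin from equation (\ref{7-13}) in Corollary 7.6:
\[
\mathcal P_{\rm tr}^0\varphi = \nabla_{\rm tr}^*\nabla_{\rm tr}\varphi - \tfrac14(r^\nabla + |\bar\kappa^\sharp|^2)\varphi + \tfrac{\sqrt{-1}}{2}\,{\rm div}_\nabla((\bar\kappa^\sharp)^c)\cdot\varphi.
\]
The curvature term $-\tfrac14|\bar\kappa^\sharp|^2$ becomes $-\tfrac14|\kappa|^2$ as above. For the divergence term, a direct computation using $\kappa^{\sharp_g} = J\bar\kappa^\sharp$ and $J^2 = -\mathrm{Id}$ yields
\[
(\kappa^{\sharp_g})^c = \kappa^{\sharp_g} - \sqrt{-1}\,J\kappa^{\sharp_g} = J\bar\kappa^\sharp + \sqrt{-1}\,\bar\kappa^\sharp = \sqrt{-1}\bigl(\bar\kappa^\sharp - \sqrt{-1}\,J\bar\kappa^\sharp\bigr) = \sqrt{-1}\,(\bar\kappa^\sharp)^c.
\]
Since ${\rm div}_\nabla$ is $\mathbb C$-linear, this gives $\tfrac{\sqrt{-1}}{2}\,{\rm div}_\nabla((\bar\kappa^\sharp)^c) = \tfrac12\,{\rm div}_\nabla((\kappa^{\sharp_g})^c)$, completing the derivation of (\ref{8-4}).

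I do not expect a substantive obstacle: both halves of the theorem are essentially translations between the symplectic sharp $\bar\kappa^\sharp$ and the Riemannian sharp $\kappa^{\sharp_g}$, carried out on top of the Weitzenböck formulas proved earlier. The only mildly delicate step is the sign bookkeeping in showing $(\kappa^{\sharp_g})^c = \sqrt{-1}(\bar\kappa^\sharp)^c$, but this is just $J^2 = -\mathrm{Id}$ applied carefully.
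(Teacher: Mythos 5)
Your proof is correct and follows essentially the same route as the paper: specialize Corollary 5.7 (equation (5.9)) and Corollary 7.6, then translate between the symplectic sharp $\bar\kappa^\sharp$ and the metric sharp $\kappa^{\sharp_g}$ via Lemma 8.2 and the identity $(\kappa^{\sharp_g})^c=\sqrt{-1}(\bar\kappa^\sharp)^c$ (equivalently $(\kappa^\sharp)^c=-\sqrt{-1}(\kappa^{\sharp_g})^c$, as the paper writes it). The only difference is that you spell out the norm identity $|\bar\kappa^\sharp|^2=|\kappa|^2$ and the $J^2=-\mathrm{Id}$ bookkeeping explicitly, which the paper leaves tacit.
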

\begin{proof}  Since  the transversal Levi-Civita connection $\nabla$ satisfies $\nabla J=0$, the proof of (\ref{8-3})  follow from (\ref{5-9}) and (\ref{8-1}). Since 
\begin{align*}
(\kappa^\sharp)^c = -\sqrt{-1}(\kappa^{\sharp_g})^c,
\end{align*} 
the proof of  (\ref{8-4}) follows  from Corollary 7.6.  
\end{proof}
\begin{cor} If  a transverse K\"ahler foliation  is taut, then  any  spinor field $\varphi\in Sp_0^J(\mathcal F)$ satisfies
\begin{equation*}
\mathcal P_{\rm tr}^0\varphi= \nabla_{\rm tr}^*\nabla_{\rm tr}\varphi-\frac14r^\nabla \varphi.
\end{equation*}
\end{cor}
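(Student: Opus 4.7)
The proof is a direct specialization of Theorem 8.3. The plan is to invoke equation (8-4) and then eliminate every $\kappa$-dependent term using tautness. Recall that a Riemannian foliation on a closed manifold is \emph{taut} precisely when there exists a bundle-like metric with respect to which all leaves are minimal; with such a metric chosen once and for all (as is implicit throughout Section~8, where the bundle-like metric $g_Q$ is part of the data), tautness is equivalent to $\kappa=0$. Thus I would open the proof with a single sentence identifying the tautness hypothesis with the vanishing of the mean curvature form $\kappa$.

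Once $\kappa=0$, one has $\kappa^{\sharp_g}=0$, hence $(\kappa^{\sharp_g})^c = \kappa^{\sharp_g} - \sqrt{-1}J\kappa^{\sharp_g} = 0$, so $\mathrm{div}_\nabla((\kappa^{\sharp_g})^c) = 0$. Likewise $|\kappa|^2 = 0$. Plugging these into the formula (8-4) of Theorem 8.3, namely
\begin{equation*}
\mathcal P_{\rm tr}^0\varphi= \nabla_{\rm tr}^*\nabla_{\rm tr}\varphi-\tfrac14(r^\nabla +|\kappa|^2)\varphi+\tfrac{1}{2}\mathrm{div}_\nabla((\kappa^{\sharp_g})^c)\varphi,
\end{equation*}
collapses it immediately to
\begin{equation*}
\mathcal P_{\rm tr}^0\varphi = \nabla_{\rm tr}^*\nabla_{\rm tr}\varphi - \tfrac14 r^\nabla \varphi,
\end{equation*}
which is the desired identity.

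There is essentially no obstacle beyond the bookkeeping verification that the tautness hypothesis legitimately forces $\kappa=0$ in the chosen bundle-like metric. In particular, no new analytic or curvature computations are required: the Weitzenböck identity (8-4) has already absorbed the role of the transverse Kähler structure (via $\nabla J=0$ and the transverse Fedosov property), of the special spinor computation (via Proposition~7.2 and Lemma~7.3), and of the symplectic curvature identification $F(\varphi)=\tfrac{\sqrt{-1}}{4}r^\nabla\varphi$ from Proposition~7.4. The corollary is thus the $\kappa=0$ shadow of that master formula.
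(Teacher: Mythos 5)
Your proof is correct and follows exactly the paper's argument: the paper's own proof simply reads that by tautness one may choose a bundle-like metric with $\kappa^{\sharp_g}=0$, after which the claim is immediate from formula~(8-4). Your version merely spells out the bookkeeping that $|\kappa|^2=0$ and $\mathrm{div}_\nabla((\kappa^{\sharp_g})^c)=0$, which the paper leaves implicit.
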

\begin{proof}  Since $\mathcal F$ is taut,  we can choose a bundle-like metric such that $\kappa^{\sharp_g}=0$.  So the proof  follows from (\ref{8-4}).
\end{proof}
\begin{lem} On a transverse K\"ahler foliation, we have
\begin{equation}\label{8-5}
{\rm Ric}^\nabla(X) =\frac12\sum_{j=1}^{2n} R^\nabla(e_j,Je_j)JX
\end{equation}
for any normal vector field $X\in\Gamma Q\cong T\mathcal F^\perp$.
\end{lem}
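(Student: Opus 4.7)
The plan is to establish the identity by combining the first Bianchi identity for $R^\nabla$ with the fact that on a transverse Kähler foliation the transverse Levi-Civita connection satisfies $\nabla J = 0$, which makes $J$ commute with $R^\nabla(Y,Z)$ for every pair of vector fields. I would also exploit the fact that $\{\bar e_j\}$ and $\{J\bar e_j\}$ are two orthonormal bases of $Q$ that differ only by a permutation and signs, so $\sum_j R^\nabla(X,Je_j)Je_j = \sum_j R^\nabla(X,e_j)e_j = {\rm Ric}^\nabla(X)$.

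Concretely, I would first write the Bianchi identity for the triple $(X,e_j,Je_j)$,
\begin{equation*}
R^\nabla(X,e_j)Je_j + R^\nabla(e_j,Je_j)X + R^\nabla(Je_j,X)e_j = 0,
\end{equation*}
and solve for $R^\nabla(e_j,Je_j)X$, obtaining
\begin{equation*}
R^\nabla(e_j,Je_j)X = -R^\nabla(X,e_j)Je_j + R^\nabla(X,Je_j)e_j.
\end{equation*}
Next I would apply $J$ to both sides. Because $\nabla J = 0$ forces $[R^\nabla(Y,Z),J]=0$, one has $J R^\nabla(e_j,Je_j)X = R^\nabla(e_j,Je_j)JX$, and similarly $J R^\nabla(X,e_j)e_j = R^\nabla(X,e_j)Je_j$, so $-J R^\nabla(X,e_j)Je_j = R^\nabla(X,e_j)e_j$. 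Putting these together gives
\begin{equation*}
R^\nabla(e_j,Je_j)JX = R^\nabla(X,e_j)e_j + R^\nabla(X,Je_j)Je_j.
\end{equation*}

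Summing over $j=1,\dots,2n$ and using the reindexing observation that $\sum_j R^\nabla(X,Je_j)Je_j = \sum_j R^\nabla(X,e_j)e_j$, I would conclude
\begin{equation*}
\sum_{j=1}^{2n}R^\nabla(e_j,Je_j)JX = 2\sum_{j=1}^{2n}R^\nabla(X,e_j)e_j = 2\,{\rm Ric}^\nabla(X),
\end{equation*}
which is precisely \eqref{8-5}. The main point to watch is that the first Bianchi identity genuinely holds for $R^\nabla$: this is fine here because a transverse Kähler foliation is transverse Fedosov (torsion-free), so the standard algebraic Bianchi identity is available. The only other subtlety is justifying the reindexing step at the transverse level, which follows directly from the orthonormality of $\{\bar e_j\}$ together with $\bar e_{n+i} = J\bar e_i$ specifying that $\{J\bar e_j\}$ is merely a signed permutation of $\{\bar e_j\}$.
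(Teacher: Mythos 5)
Your proof is correct and rests on exactly the same three ingredients as the paper's: the first Bianchi identity (valid because the transverse Levi--Civita connection is torsion-free), the commutation $[R^\nabla(Y,Z),J]=0$ coming from $\nabla J=0$ (the paper's Lemma~3.1), and the reindexing observation that $\{J\bar e_j\}$ is a signed permutation of $\{\bar e_j\}$. The only difference is presentational: you apply $J$ to the vector-valued Bianchi identity and sum directly, while the paper carries out the same manipulation inside $\omega_Q$-pairings, starting from $\omega_Q(\mathrm{Ric}^\nabla(X),JY)$ and recovering the vector identity by nondegeneracy at the end.
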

\begin{proof}  From Lemma 3.1 and Bianchi's identity, we have that for any $X,Y\in \Gamma Q$,
\begin{align*}
\omega_Q({\rm Ric}^\nabla(X),JY)&= \sum_{j=1}^{2n} \omega_Q(R(X,e_j)e_j,JY)\\
&=-\sum_{j=1}^{2n}\omega_Q(R^\nabla(X,e_j)Je_j,Y)\\
&=\sum_{j=1}^{2n}\omega_Q(R^\nabla(e_j,Je_j)X +R^\nabla(Je_j,X)e_j,Y)\\
&=\sum_{j=1}^{2n}\omega_Q(R^\nabla(e_j,Je_j)X,Y) +\sum_{j=1}^{2n}\omega_Q(R^\nabla(Je_j,X)Je_j,JY)\\
&=\sum_{j=1}^{2n}\omega_Q(R^\nabla(e_j,Je_j)X,Y) -\omega_Q({\rm Ric}^\nabla(X),JY),
\end{align*}
which implies (\ref{8-5}).
\end{proof}
\begin{lem}  On a transverse K\"ahler foliation,   any spinor field $\varphi\in\Gamma Sp(\mathcal F)$ satisfies
\begin{align}
&\sum_{j=1}^{2n}R^S(e_j,Je_j)\varphi =\sqrt{-1} \sum_{j=1}^{2n} {\rm Ric}^\nabla(e_j)\cdot e_j \cdot\varphi,\label{8-6}\\
&\sum_{j=1}^{2n} {\rm Ric}^\nabla(e_j)\cdot Je_j\cdot \varphi =-{\sqrt{-1}\over 2}r^\nabla \varphi.\label{8-7}
\end{align}
\end{lem}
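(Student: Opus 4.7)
My plan is to attack the two identities separately, using the explicit formula~(\ref{3-9}) for the spinorial curvature together with Lemma~8.5 for the first identity, and a direct expansion in the unitary basic frame together with the symplectic Clifford relation~(\ref{3-4}) for the second.

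For the first identity, I would start by specializing (\ref{3-9}) to the unitary basic frame with $\bar v_i = \bar e_i$ and $\bar w_i = J\bar e_i$, so that
\[
R^S(X,Y)\varphi = \frac{\sqrt{-1}}{2}\sum_{i=1}^n \bigl(\bar e_i \cdot R^\nabla(X,Y)J\bar e_i - J\bar e_i \cdot R^\nabla(X,Y)\bar e_i\bigr)\cdot\varphi,
\]
then sum over $(X,Y)=(e_j,Je_j)$ and apply Lemma~8.5 to collapse the inner curvature sums: $\sum_j R^\nabla(e_j,Je_j)J\bar e_i = 2{\rm Ric}^\nabla(\bar e_i)$, and by applying $J$ on both sides and using $\nabla J=0$, $\sum_j R^\nabla(e_j,Je_j)\bar e_i = -2J{\rm Ric}^\nabla(\bar e_i)$. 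Using that ${\rm Ric}^\nabla$ commutes with $J$ on a K\"ahler foliation (another direct consequence of Lemma~8.5 with $\nabla J = 0$) together with $J\bar e_i = \bar e_{n+i}$, the resulting sum condenses to $\sqrt{-1}\sum_{j=1}^{2n}\bar e_j\cdot{\rm Ric}^\nabla(\bar e_j)\cdot\varphi$. To swap this into the stated Clifford order, I would invoke (\ref{3-4}), which gives $\bar e_j\cdot{\rm Ric}^\nabla(\bar e_j) - {\rm Ric}^\nabla(\bar e_j)\cdot\bar e_j = -\sqrt{-1}\,\omega_Q(\bar e_j,{\rm Ric}^\nabla(\bar e_j))$, and observe that the right-hand side vanishes on a K\"ahler foliation: $\omega_Q(\bar e_j,{\rm Ric}^\nabla(\bar e_j)) = g_Q(J\bar e_j,{\rm Ric}^\nabla(\bar e_j)) = -g_Q(\bar e_j, {\rm Ric}^\nabla(J\bar e_j)) = 0$, where the last step uses the symmetry of ${\rm Ric}^\nabla$ together with the K\"ahler identity $g_Q(\bar e_j,{\rm Ric}^\nabla(J\bar e_j)) = g_Q(J\bar e_j,{\rm Ric}^\nabla(\bar e_j))$, which in turn yields $g_Q(J\bar e_j,{\rm Ric}^\nabla(\bar e_j)) = 0$.

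For the second identity, the plan is a purely algebraic expansion in the unitary basic frame. Writing ${\rm Ric}^\nabla(\bar e_j) = \sum_k r_{jk}\bar e_k$ with $r_{jk}=r_{kj}$ and $J\bar e_j = \sum_l J_{jl}\bar e_l$ with $J_{jl}=-J_{lj}$ (so that $\omega_Q(\bar e_k,\bar e_l) = g_Q(J\bar e_k,\bar e_l)=J_{kl}$), the target sum becomes $\sum_{j,k,l} r_{jk}J_{jl}\,\bar e_k\cdot\bar e_l\cdot\varphi$. I would then decompose $\bar e_k\cdot\bar e_l = \tfrac12(\bar e_k\cdot\bar e_l+\bar e_l\cdot\bar e_k) - \tfrac{\sqrt{-1}}{2}J_{kl}$ via the symplectic Clifford relation. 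The antisymmetric piece reduces, using $J^2=-I$ in the form $\sum_l J_{jl}J_{kl}=\delta_{jk}$, to $-\tfrac{\sqrt{-1}}{2}\sum_j r_{jj}$, which I identify with $-\tfrac{\sqrt{-1}}{2}r^\nabla$ by checking that on a transverse Fedosov connection with $\nabla J=0$ one has ${\rm Sric}^\nabla(X,Y) = g_Q({\rm Ric}^\nabla(X),Y)$ (an immediate consequence of Lemma~8.5 and the relation $\omega_Q(JX,Y)=g_Q(X,Y)$). The symmetric piece has coefficient $\sum_j(r_{jk}J_{jl}+r_{jl}J_{jk})$ multiplying $\bar e_k\cdot\bar e_l+\bar e_l\cdot\bar e_k$; in matrix form this coefficient is $RJ+(RJ)^\top = RJ-JR$ (using $R^\top=R$ and $J^\top=-J$), which vanishes because ${\rm Ric}^\nabla\circ J = J\circ{\rm Ric}^\nabla$ on a K\"ahler foliation.

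The main obstacle I anticipate is careful bookkeeping with signs and factors of $\sqrt{-1}$: the symplectic Clifford relation carries the unfamiliar factor $-\sqrt{-1}$ in place of the Riemannian $-2$, and the translations $\omega_Q(X,Y)=g_Q(JX,Y)$, $J^\top=-J$ must be applied consistently when separating sums into symmetric and antisymmetric pieces. The substantive geometric inputs are Lemma~8.5 and the two K\"ahler symmetries of the Ricci tensor, namely $J$-commutativity ${\rm Ric}^\nabla\circ J = J\circ{\rm Ric}^\nabla$ and the vanishing $g_Q({\rm Ric}^\nabla(X),JX)=0$; past these, both identities reduce to manipulations in the symplectic Clifford algebra on the unitary basic frame.
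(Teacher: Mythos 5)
Your argument is correct and rests on the same three geometric inputs as the paper---formula~(\ref{3-9}) for $R^S$, Lemma~8.5, and the symplectic Clifford relation~(\ref{3-4})---but the execution differs enough to be worth noting. For~(\ref{8-6}) you apply Lemma~8.5 first, reduce to the intermediate expression $\sqrt{-1}\sum_j\bar e_j\cdot{\rm Ric}^\nabla(\bar e_j)\cdot\varphi$, and then justify the Clifford swap by proving the \emph{pointwise} vanishing $\omega_Q(\bar e_j,{\rm Ric}^\nabla(\bar e_j))=0$ (equivalently, that $J{\rm Ric}^\nabla$ is $g_Q$-antisymmetric, so its diagonal is zero); the paper instead performs the swap of Clifford factors before invoking Lemma~8.5, relying only on the vanishing of the \emph{summed} correction term and never isolating the pointwise statement. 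Your version is slightly sharper and makes the justification for the reordering explicit, where the paper leaves it implicit. For~(\ref{8-7}) you decompose the Clifford product into symmetric and antisymmetric parts and argue in matrix coordinates that the symmetric part carries the coefficient $RJ-JR=0$ while the antisymmetric part yields $-\tfrac{\sqrt{-1}}{2}\operatorname{tr}R$; the paper instead uses the $\omega_Q$-symmetry of ${\rm Ric}^\nabla$ to show the sum anticommutes with itself (equation~(8.9) there) and then collects the commutator via~(\ref{3-4}) against the definition~(\ref{7-7-1}) of $r^\nabla$. The two derivations of~(\ref{8-7}) are genuinely different in form but equivalent in substance: your symmetric-part vanishing is exactly the statement $RJ=JR$, and the paper's anticommutation identity packages the same fact. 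One small point you should make explicit if writing this up: the identification $\sum_j r_{jj}=r^\nabla$ does require reconciling the sum over $j=1,\dots,n$ in the definition~(\ref{7-7}) of ${\rm Sric}^\nabla$ with the factor $\tfrac12$ in Lemma~8.5 (where the sum runs to $2n$); the two factors of $2$ cancel and give ${\rm Sric}^\nabla(s,t)=g_Q({\rm Ric}^\nabla(s),t)$ as you assert, but the bookkeeping is worth spelling out since the paper's own normalization conventions are easy to misread.
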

\begin{proof}  From (\ref{3-9}) and Lemma 8.5, we have
\begin{align*}
R^S(e_j,Je_j)\varphi&=- {1\over 2i} \sum_{j,k=1}^{2n} e_k\cdot R^\nabla(e_j,Je_j)Je_k \cdot\varphi\\
&=- {1\over 2i} \sum_{j,k=1}^{2n} R^\nabla(e_j,Je_j)Je_k\cdot e_k  \cdot\varphi\\
&=\sqrt{-1}\sum_{k=1}^{2n} {\rm Ric}^\nabla(e_k)\cdot e_k\cdot\varphi,
\end{align*}
which proves (\ref{8-6}).  For the proof of (\ref{8-7}), we note that from Lemma 3.1
\begin{equation}\label{8-8}
\omega_Q({\rm Ric}^\nabla(X),Y)=\omega_Q(X,{\rm Ric}^\nabla(Y))
\end{equation}
for all normal vector fields $X,Y$.  Hence from (\ref{8-8}) 
\begin{align}\label{8-9}
\sum_{j=1}^{2n} {\rm Ric}^\nabla(e_j)\cdot Je_j\cdot\varphi&=\sum_{j,k=1}^{2n} \omega_Q({\rm Ric}^\nabla(e_j),e_k)Je_k\cdot Je_j\cdot\varphi\notag\\
&=\sum_{j,k=1}^{2n}\omega_Q(e_j,{\rm Ric}^\nabla(e_k))Je_k\cdot Je_j\cdot\varphi\notag\\
&=-\sum_{k=1}^{2n} Je_k\cdot {\rm Ric}^\nabla(e_k)\cdot\varphi.
\end{align}
From (\ref{7-7}) and (\ref{8-9}), we have
\begin{align*}
\sqrt{-1} r^\nabla\varphi&= \sqrt{-1}\sum_{j=1}^{2n} \omega_Q({\rm Ric}^\nabla(e_j),Je_j)\varphi\\
&=\sum_{j=1}^{2n} \{{\rm Ric}^\nabla(e_j)\cdot Je_j - Je_j\cdot {\rm Ric}^\nabla(e_j)\}\varphi\\
&=2\sum_{j=1}^{2n} {\rm Ric}^\nabla(e_j)\cdot Je_j\cdot\varphi,
\end{align*}
which proves (\ref{8-7}).
\end{proof}
\begin{lem}  On a transverse K\"ahler foliation, any spinor field $\varphi\in\Gamma Sp(\mathcal F)$ satisfies
\begin{equation*}
\sum_{j=1}^{2n} Je_j\cdot e_j\cdot\varphi =\sqrt{-1}n\varphi.
\end{equation*}
\end{lem}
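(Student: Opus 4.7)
The plan is to exploit the symplectic Clifford relation (\ref{3-4}) together with the structure of the unitary basic frame $\{e_j\}_{j=1}^{2n}$, in which $\bar e_{n+j} = J\bar e_j$ for $j=1,\ldots,n$ and $J^2 = -1$.

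First I would split the sum using the pairing $\bar e_{n+j} = J\bar e_j$. Relabeling the second half of the sum via $k = n+j$ and using $J^2 = -1$, one obtains
\begin{equation*}
\sum_{j=1}^{2n} J\bar e_j \cdot \bar e_j \cdot \varphi
= \sum_{j=1}^n J\bar e_j \cdot \bar e_j \cdot \varphi
+ \sum_{j=1}^n J(J\bar e_j)\cdot J\bar e_j \cdot \varphi
= \sum_{j=1}^n \bigl(J\bar e_j \cdot \bar e_j - \bar e_j \cdot J\bar e_j\bigr)\cdot \varphi .
\end{equation*}
This reduces the problem to evaluating a symplectic Clifford commutator on each pair $(\bar e_j, J\bar e_j)$.

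Next I would apply (\ref{3-4}) to each summand. Since $\omega_Q(s, Jt) = g_Q(s,t)$, and $\{\bar e_j\}$ is $g_Q$-orthonormal, we have $\omega_Q(J\bar e_j, \bar e_j) = -\omega_Q(\bar e_j, J\bar e_j) = -g_Q(\bar e_j,\bar e_j) = -1$. Substituting into (\ref{3-4}) gives
\begin{equation*}
\bigl(J\bar e_j \cdot \bar e_j - \bar e_j \cdot J\bar e_j\bigr)\cdot \varphi
= -\sqrt{-1}\, \omega_Q(J\bar e_j, \bar e_j)\,\varphi
= \sqrt{-1}\,\varphi .
\end{equation*}
Summing the resulting identity over $j=1,\ldots,n$ yields $\sqrt{-1}\,n\,\varphi$, which is the claim.

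There is no real obstacle: the only subtlety is the sign in $\omega_Q(J\bar e_j,\bar e_j)$, which must be tracked carefully from the relation $g_Q(\cdot,\cdot) = \omega_Q(\cdot,J\cdot)$ used throughout Sections~2 and~4. Once that sign is fixed and the frame pairing $\bar e_{n+j}=J\bar e_j$ is used to reorganize the sum, the identity falls out directly from a single application of the Weyl/symplectic Clifford relation.
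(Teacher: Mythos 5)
Your proof is correct and follows essentially the same route as the paper's: both reduce the expression to a symplectic Clifford commutator via the pairing $\bar e_{n+j}=J\bar e_j$ and then apply relation (\ref{3-4}) together with the compatibility $g_Q(\cdot,\cdot)=\omega_Q(\cdot,J\cdot)$. The only cosmetic difference is that you carry out the reindexing explicitly and arrive at a sum over $j=1,\ldots,n$, whereas the paper keeps the full sum over $j=1,\ldots,2n$ and inserts a factor $\tfrac12$ by antisymmetrizing, which is the same pairing argument stated more tersely.
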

\begin{proof}   By a direct calculation, we get
\begin{align*}
\sum_{j=1}^{2n} Je_j \cdot e_j\cdot\varphi = \frac12\sum_{j=1}^{2n} \{Je_j\cdot e_j - e_j\cdot Je_j\}\cdot \varphi = \frac{\sqrt{-1}}2 \sum_{j=1}^{2n} \omega_Q(e_j,Je_j)\varphi =\sqrt{-1}n\varphi.
\end{align*}
\end{proof}
\begin{defn}  Let $h$ be a basic function on $M$.  A  transverse K\"ahler foliation  is said to be  of {\it constant holomorphic sectional curvature} $h$ if
\begin{equation*}
\omega_Q(R^\nabla(X,JX)X,X) = h \omega_Q(X,JX)^2
\end{equation*}
for any normal vector field $X\in T\mathcal F^\perp$.
\end{defn}
\begin{prop} A transverse K\"ahler foliation  is of constant holomorphic sectional curvature $h$ if and only if
\begin{align*}
\omega_Q(R^\nabla(X,Y)Z,W)&={h\over 4} \{\omega_Q(X,Z)\omega_Q(Y,JW) + \omega_Q(X,W)\omega_Q(Y,JZ)-\omega_Q(Y,Z)\omega_Q(X,JW) \\
&-\omega_Q(Y,W)\omega_Q(X,JZ)
+2\omega_Q(X,Y)\omega_Q(Z,JW)\}
\end{align*}
for any normal vector fields $X,Y,Z,W$.
\end{prop}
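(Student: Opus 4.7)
The statement is the transverse analogue of the classical characterization of complex space forms, so I would split the proof into the two implications and rely on polarization for the nontrivial direction.

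For the easy implication ($\Leftarrow$), I would simply substitute $Y = JX$, $Z = X$, $W = X$ into the five-term expression. Since $\omega_Q$ is skew-symmetric, $\omega_Q(X,X) = \omega_Q(JX,JX) = 0$, so the first two terms vanish and the last three collapse to $\omega_Q(X,JX)^2 + \omega_Q(X,JX)^2 + 2\omega_Q(X,JX)^2 = 4\,\omega_Q(X,JX)^2$. Multiplied by $h/4$ this is $h\,\omega_Q(X,JX)^2$, matching Definition 8.8.

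For the forward implication ($\Rightarrow$), I would introduce the difference tensor
$$T(X,Y,Z,W) := \omega_Q(R^\nabla(X,Y)Z,W) - \tfrac{h}{4}\{\text{RHS of the stated identity}\},$$
and prove $T \equiv 0$. The first task is to verify that $T$ inherits all the symmetries of a Kähler curvature tensor: (i) skew-symmetry in $(X,Y)$, which is immediate; (ii) symmetry in $(Z,W)$, coming from Lemma 3.1 on the $R^\nabla$-side and from the identity $\omega_Q(Z,JW)=\omega_Q(W,JZ)$ (a consequence of $\omega_Q(JU,JV)=\omega_Q(U,V)$) on the explicit-expression side; (iii) $J$-invariance $T(X,Y,JZ,JW) = T(X,Y,Z,W)$, which on the curvature side is equation $(3.12)$ (valid because $\nabla J = 0$ for the transverse Levi-Civita connection) and on the expression side is a short check; and (iv) the first Bianchi identity, which holds for $R^\nabla$ because the transverse Levi-Civita connection is torsion-free, and is easily verified for the explicit expression. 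By assumption and the $(\Leftarrow)$ computation above, $T(X,JX,X,X) = 0$ for every $X \in \Gamma Q$.

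To conclude $T \equiv 0$, I would apply the standard polarization: replace $X$ by $X + tY$ in the vanishing identity $T(X,JX,X,X) = 0$, use that $J$ is $\mathbb{R}$-linear, expand in $t$, and collect coefficients. The coefficient of $t$ gives $T(Y,JX,X,X) + T(X,JY,X,X) + 2\,T(X,JX,Y,X) = 0$, and higher-order coefficients, combined with skew-symmetry in the first pair, symmetry in the last pair, $J$-invariance, and the Bianchi identity, yield a linear system whose only solution is $T(X,Y,Z,W) = 0$ for all $X,Y,Z,W$. The main obstacle is the bookkeeping in this polarization: there are many terms, and one must use the symplectic symmetry $T(X,Y,Z,W) = T(X,Y,W,Z)$ (which differs from the Riemannian skew-symmetry in the last pair) together with Bianchi in a careful sequence to invert the system. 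No new geometric ingredient is needed beyond Lemma 3.1 and the Kähler identity $\nabla J = 0$; the argument is the transverse translation of the classical polarization for Kähler space forms.
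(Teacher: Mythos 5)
Your proposal is correct and follows the same route the paper defers to: the paper's proof is just the citation ``trivial from [KN]'', and your sketch is precisely the polarization argument from Kobayashi--Nomizu for complex space forms, adapted to the transverse setting. The $(\Leftarrow)$ substitution and the list of algebraic symmetries of the difference tensor $T$ (skew in $(X,Y)$, symmetric in $(Z,W)$ via Lemma 3.1, $J$-invariance in the last pair via (3.12), and the first Bianchi identity from torsion-freeness) are all set up correctly; the only minor stylistic deviation from KN is that you work throughout in the $\omega_Q$-form of the curvature tensor rather than first twisting back to the $g_Q$-form $g_Q(R^\nabla(X,Y)Z,W)$ (where the last pair is skew), but since $\omega_Q(R^\nabla(X,Y)Z,W)=g_Q(R^\nabla(X,Y)JZ,W)$ and $\nabla J=0$, the two formulations are equivalent and the polarization bookkeeping you outline goes through.
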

\begin{proof}  The proof is trivial from \cite{KN}.
\end{proof}

\begin{thm}  On a transverse  K\"ahler foliation of constant holomorphic sectional curvature $h$,  it holds that for any $\varphi\in\Gamma Sp(\mathcal F)$
\begin{align}\label{8-10}
\mathcal P_{tr}\varphi =\nabla_{\rm tr}^*\nabla_{\rm tr}\varphi + \frac{h}4 n(n-1)\varphi - 2h (\mathcal H^J)^2\varphi-\frac14|\kappa|^2\varphi+{\sqrt{-1}\over 2}\{P(\kappa^{\sharp_g})+\tilde P(J\kappa^{\sharp_g})\}\cdot\varphi.
\end{align}
In particular, for any $\varphi\in\Gamma Sp_0^J(\mathcal F)$
\begin{align}\label{8-11}
\mathcal P_{tr}^0\varphi =\nabla_{\rm tr}^*\nabla_{\rm tr}\varphi - \frac{h}4 n(n+1)\varphi -\frac14|\kappa|^2\varphi+{1\over 2}{\rm div}_\nabla(\kappa^{\sharp_g})^{c}\cdot\varphi.
\end{align}
\end{thm}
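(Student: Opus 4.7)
Theorem 8.3 already gives $\mathcal{P}_{\rm tr}\varphi = \nabla_{\rm tr}^*\nabla_{\rm tr}\varphi + \sqrt{-1}F(\varphi) - \tfrac14|\kappa|^2\varphi + \tfrac{\sqrt{-1}}{2}\{P(\kappa^{\sharp_g}) + \tilde P(J\kappa^{\sharp_g})\}\cdot\varphi$ on any transverse K\"ahler foliation, so (8.10) reduces to the pointwise identity $\sqrt{-1}F(\varphi) = \tfrac{h}{4}n(n-1)\varphi - 2h(\mathcal H^J)^2\varphi$; the remaining terms transcribe verbatim. My plan is to compute $F(\varphi)$ explicitly using the constant holomorphic sectional curvature hypothesis.

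First, I rewrite Proposition 8.9 with the last slot raised. Using $g_Q(v,w)=\omega_Q(v,Jw)$ together with the resolutions of the identity $\sum_a g_Q(Y,\bar e_a)\bar e_a = Y$ and $\sum_a \omega_Q(Y,\bar e_a)\bar e_a = JY$, this yields the closed form $R^\nabla(X,Y)Z = \tfrac{h}{4}\{-\omega_Q(X,Z)JY + \omega_Q(Y,Z)JX - 2\omega_Q(X,Y)JZ + g_Q(Y,Z)X - g_Q(X,Z)Y\}$. Substituting into (3.9) for $R^S$ and using $\nabla J = 0$ to commute $R^\nabla(X,Y)$ with $J$, the symplectic-frame index sums out via Lemma 6.2 (which converts $\sum_a \bar e_a\cdot\bar e_a$ into $2\mathcal H^J$); the residual scalar pieces cancel by the symmetry $\omega_Q(JX,Y) - \omega_Q(JY,X) = 0$, leaving the compact formula $R^S(X,Y)\varphi = \tfrac{\sqrt{-1}h}{4}\{Y\cdot JX - X\cdot JY + 2\omega_Q(X,Y)\mathcal H^J\}\cdot\varphi$.

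Plugging this into $F(\varphi) = \sum_{i,j=1}^{2n} J\bar e_i\cdot\bar e_j\cdot R^S(e_i,e_j)\varphi$ breaks the computation into three double sums. Each reduces to a combination of $(\mathcal H^J)^2$ and a scalar using three tools: Lemma 6.2 (turns $\sum_j \bar e_j\cdot\bar e_j$ into $2\mathcal H^J$), Lemma 8.7 (turns $\sum_j J\bar e_j\cdot\bar e_j$ into $\sqrt{-1}n$), and Proposition 6.3 (commutes $\mathcal H^J$ past Clifford multiplication by $J\bar e_i$). After careful bookkeeping, the three sums contribute $4(\mathcal H^J)^2 + 2n$, $-n(n+1)$, and $4(\mathcal H^J)^2$ respectively, so $F(\varphi) = \tfrac{\sqrt{-1}h}{4}\{8(\mathcal H^J)^2 - n(n-1)\}\varphi$ and hence $\sqrt{-1}F(\varphi) = \tfrac{h}{4}n(n-1)\varphi - 2h(\mathcal H^J)^2\varphi$, establishing (8.10).

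For (8.11) on $\Gamma Sp_0^J(\mathcal F)$, apply Proposition 6.1: since $\mathcal H^J\varphi = -\tfrac{n}{2}\varphi$, one gets $(\mathcal H^J)^2\varphi = \tfrac{n^2}{4}\varphi$, and the scalar coefficient becomes $\tfrac{h}{4}n(n-1) - 2h\cdot\tfrac{n^2}{4} = -\tfrac{h}{4}n(n+1)$. For the $P,\tilde P$ term, Lemma 7.3 applied with $s = J\kappa^{\sharp_g}$ (so $Js = -\kappa^{\sharp_g}$) produces $\{P(\kappa^{\sharp_g}) + \tilde P(J\kappa^{\sharp_g})\}\cdot\varphi = -{\rm div}_\nabla((J\kappa^{\sharp_g})^c)\varphi$, and the definition $s^c = s - \sqrt{-1}Js$ yields the identity $(J\kappa^{\sharp_g})^c = \sqrt{-1}(\kappa^{\sharp_g})^c$, so the term rewrites as $\tfrac{1}{2}{\rm div}_\nabla((\kappa^{\sharp_g})^c)\varphi$ as required. \emph{The main obstacle} is the reduction of the three Clifford double sums in the preceding paragraph: one must carefully distinguish when $\sum_i J\bar e_i\cdot\bar e_i$ behaves as the operator $\sqrt{-1}n$ versus when it must be slid past another Clifford multiplier, and recognize each contraction as contributing either a factor of $\mathcal H^J$ or a numerical constant.
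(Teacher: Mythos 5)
Your proof is correct, and it takes a genuinely different computational route from the paper. Both proofs reduce (8.10) to evaluating $\sqrt{-1}F(\varphi)$ under the constant holomorphic sectional curvature hypothesis, but the paper substitutes Proposition 8.9 directly into the quadruple Clifford sum $F(\varphi) = \frac{\sqrt{-1}}{2}\sum_{i,j,k,l}\omega_Q(R^\nabla(e_i,e_j)e_k,e_l)\,Je_i\cdot e_j\cdot e_k\cdot e_l\cdot\varphi$ and then reorganizes by brute force, whereas you first raise the last slot of $R^\nabla$ and contract to obtain the compact intermediate identity
$R^S(X,Y)\varphi = \tfrac{\sqrt{-1}h}{4}\{Y\cdot JX - X\cdot JY + 2\omega_Q(X,Y)\mathcal H^J\}\cdot\varphi$,
which you then feed into the double sum for $F(\varphi)$. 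I checked your intermediate formula: it follows from $R^S(X,Y)\varphi = \tfrac{\sqrt{-1}}{2}\sum_j \bar e_j\cdot R^\nabla(X,Y)J\bar e_j\cdot\varphi$, the contractions $\sum_j\omega_Q(X,\bar e_j)\bar e_j = JX$, $\sum_j g_Q(X,\bar e_j)\bar e_j = X$, Lemma 6.2, and the cancellation $\omega_Q(JX,Y)=\omega_Q(JY,X)$; and your three double-sum evaluations $4(\mathcal H^J)^2 + 2n$, $-n(n+1)$, and $4(\mathcal H^J)^2$ assemble to $F(\varphi) = \tfrac{\sqrt{-1}h}{4}\{8(\mathcal H^J)^2 - n(n-1)\}\varphi$, which matches the paper's answer (the paper has a typo dropping an $h$ in its final line for $F$). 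Your handling of (8.11), including the identity $(J\kappa^{\sharp_g})^c=\sqrt{-1}(\kappa^{\sharp_g})^c$ and the scalar arithmetic $\tfrac{h}{4}n(n-1)-\tfrac{h}{2}n^2 = -\tfrac{h}{4}n(n+1)$, is also correct. The advantage of your route is that the intermediate $R^S$ formula is a reusable, coordinate-free statement (it immediately gives, e.g., $r^\nabla = hn(n+1)$ on restriction to $Sp_0^J$), and the subsequent double-sum bookkeeping is more transparent than the paper's quadruple sum; the paper's route uses only lower-level Clifford identities but is harder to audit.
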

\begin{proof}   From (\ref{3-9}) and (\ref{3-12}), we get
\begin{align*}
F(\varphi)&=\sum_{i,j=1}^{2n} Je_i \cdot e_j\cdot R^S(e_i,e_j)\varphi \\
&= {\sqrt{-1} \over 2} \sum_{i,j,k,l} \omega_Q(R^\nabla(e_i,e_j)e_k,e_l)Je_i\cdot e_j\cdot e_k\cdot e_l\cdot\varphi
\end{align*}
From Proposition 8.9, we get
\begin{align*}
\omega_Q&(R^\nabla(e_i,e_j)e_k,e_l)\\
& ={h\over 4} \{\delta_{jl}\omega_Q(e_i,e_k) + \delta_{jk}\omega_Q(e_i,e_l)-\delta_{il}\omega_Q(e_j,e_k)-\delta_{ik}\omega_Q(e_j,e_l)+ 2\delta_{kl}\omega_Q(e_i,e_j)\}
\end{align*}
Since $\sum_{j=1}^{2n} \omega_Q(X,e_j)e_j = JX$ for any $X\in T\mathcal F^\perp$,  we have
\begin{align*}
\sqrt{-1}F(\varphi)= &-{h\over 8} \sum_{i,j=1}^{2n} (Je_i\cdot Je_j\cdot e_i\cdot e_j + Je_i\cdot Je_j\cdot e_j\cdot e_i + e_i\cdot e_j \cdot e_j \cdot e_i\\
&+e_i\cdot e_j\cdot e_i\cdot e_j + 2e_i\cdot e_i\cdot e_j\cdot e_j)\varphi\\
=&-{h\over 8} \sum_{i,j=1}^{2n}(Je_i\cdot e_i\cdot Je_j\cdot e_j +Je_i\cdot Je_j\cdot e_j\cdot e_i + 4 e_i\cdot e_i\cdot e_j \cdot e_j)\cdot\varphi \\
&+{\sqrt{-1}h\over 4} \sum_{j=1}^{2n}Je_j\cdot e_j\cdot\varphi. 
\end{align*}	
From Proposition 6.3 and Lemma 8.6, we get
\begin{align*}
&\sum_{i,j} Je_i\cdot Je_j\cdot e_j\cdot e_i\cdot\varphi =-n^2\varphi,\\
&\sum_{i,j}  e_i\cdot e_i\cdot e_j\cdot e_j\cdot\varphi = 4(\mathcal H^J)^2\varphi.
\end{align*}
Hence 
\begin{align*}
\sqrt{-1}F(\varphi) &= -{h\over 8}(-2n^2 \varphi + 16(\mathcal H^J)^2\varphi -{h\over 4}n \varphi\\
 &= {h\over 4}n(n-1) \varphi -2(\mathcal H^J)^2\varphi. 
\end{align*}
The proof of (\ref{8-10}) follows from (\ref{8-3}).   For $\varphi\in\Gamma Sp_0^J(\mathcal F)$,  $\mathcal H^J(\varphi) =-\frac{n}2\varphi$.  Hence the proof of (\ref{8-11}) follows from Lemma 7.3 and (\ref{8-10}).
\end{proof}
\begin{prop} \cite[Proposition 3.10]{JR} Let $(M,\mathcal F,J,g_Q)$ be  a transverse K\"ahler foliation on a closed manifold $M$. Then there exists a bundle-like metric compatible with the K\"ahler structure such that $\kappa$ is basic harmonic; that is, $\delta_B\kappa =\delta_B(J\kappa) =0$ and $\kappa=\kappa_B$, where $\kappa_B$ is the basic part of $\kappa$.
\end{prop}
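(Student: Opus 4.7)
The statement being Proposition~3.10 of \cite{JR}, the plan is to reconstruct the standard argument using a two-step refinement of the bundle-like metric. The key ingredients are the Dom\'inguez (\'Alvarez L\'opez) theorem on Riemannian foliations and the basic Hodge theory combined with the transverse K\"ahler identities.

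First, I would invoke the theorem that on any Riemannian foliation over a closed manifold one can find a bundle-like metric whose mean curvature form $\kappa$ is basic. This deformation affects only the leafwise part of the Riemannian metric, so neither the transverse complex structure $J$ nor the transverse Hermitian metric $g_Q$ is altered, and the K\"ahler compatibility is preserved. Once $\kappa$ is basic it is automatically closed, since for an isoparametric foliation the Rummler-type formula forces $d\kappa = 0$.

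Second, I would apply the basic Hodge decomposition on the compact transverse K\"ahler foliation. Write $\kappa=\kappa_H+d_B f+\delta_B\beta$ with $\kappa_H$ basic harmonic, $f$ a basic function, and $\beta$ a basic $2$-form. Closedness of $\kappa$ yields $\delta_B\beta=0$, hence $\kappa=\kappa_H+d_B f$. Now perform a tangential conformal change $g\mapsto e^{2f/p}\,g|_{T\mathcal F}\oplus g_Q$ (with $p=\dim T\mathcal F$), which leaves the transverse K\"ahler data untouched but modifies the mean curvature to $\kappa-d_B f=\kappa_H$. Thus after this second deformation $\kappa=\kappa_B=\kappa_H$, so in particular $d_B\kappa=0$ and $\delta_B\kappa=0$.

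Third, to obtain $\delta_B(J\kappa)=0$, I would use the transverse K\"ahler identities: on a transverse K\"ahler foliation the basic Laplacian $\Delta_B$ commutes with $J$ acting on basic $1$-forms. Since $\kappa_H$ is basic harmonic, $J\kappa_H$ is also basic harmonic, which gives both $d_B(J\kappa)=0$ and $\delta_B(J\kappa)=0$.

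The main obstacle is the bookkeeping at each stage: verifying that the Dom\'inguez deformation and the subsequent tangential conformal change both preserve the transverse K\"ahler structure (this is immediate once one checks that neither deformation touches the normal bundle metric), and making the transverse K\"ahler identities fully rigorous on the space of basic forms. The latter is not completely formal because the basic de Rham complex on a general Riemannian foliation need not inherit all classical Hodge-theoretic identities, but for a transverse K\"ahler foliation the $(p,q)$-decomposition of basic forms and the basic K\"ahler identities do hold, which is exactly what one needs to conclude that $J$ preserves basic harmonicity.
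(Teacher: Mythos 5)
Your roadmap (Dom\'inguez to make $\kappa$ basic, a leafwise conformal rescaling, then transverse K\"ahler identities) is reasonable, but the second step as written has a genuine gap. After you Hodge-decompose $\kappa=\kappa_H+d_B f$ with respect to the \emph{original} bundle-like metric and rescale $g|_{T\mathcal F}\mapsto e^{2f/p}g|_{T\mathcal F}$, the new mean curvature is indeed $\kappa_H$, but the $L^2$ structure on basic forms changes as well: the volume density becomes $e^f\mu_M$, so the basic codifferential shifts to $\delta_B'=\delta_B-i\bigl((d_B f)^{\sharp_g}\bigr)$. Consequently $\delta_B'\kappa_H=-\langle d_B f,\kappa_H\rangle_{g_Q}$, a basic function whose integral vanishes (that is all that $L^2$-orthogonality of $d_Bf$ and $\kappa_H$ gives) but which need not vanish pointwise. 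So $\kappa_H$ is not automatically coclosed for the \emph{new} metric, and the argument does not yet show that the new $\kappa$ is basic harmonic.

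The missing analytic input is this: set $u=e^f$ and note that $\delta_B'(\kappa-d_Bf)=0$ is equivalent to the \emph{linear} equation $\Delta_B u+\langle d_Bu,\kappa\rangle_{g_Q}-(\delta_B\kappa)\,u=0$ for a positive basic function $u$. Its formal $L^2$ adjoint is $v\mapsto\Delta_B v-\langle d_Bv,\kappa\rangle_{g_Q}$, which annihilates the constants, so one obtains a positive solution $u$ by an index/maximum-principle argument; this is how basic-harmonic representatives of $[\kappa_B]$ are actually produced, and your sketch omits it. Finally, step three is also not free: on a non-taut transverse K\"ahler foliation the $\delta_B$-K\"ahler identities are not automatic, because $\delta_B=\delta_T+i(\kappa^{\sharp_g})$ already involves $\kappa$; establishing that $J$ preserves basic $\Delta_B$-harmonicity is intertwined with the very properties of $\kappa$ being proved, and is a central technical point of the cited reference \cite{JR} rather than a black-box consequence of the ordinary K\"ahler package.
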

\begin{lem}  On a transverse K\"ahler foliation, we have
\begin{equation*}
{\rm div}_\nabla(\kappa^{\sharp_g})^c =|\kappa|^2.
\end{equation*}
\end{lem}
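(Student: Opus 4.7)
\emph{Proof plan.} The plan is to unpack the complex combination $(\cdot)^{c}$ introduced in Lemma 7.3 and then identify the resulting real and imaginary parts separately using the basic Hodge theory that is made available by Proposition 8.11. By definition $(\kappa^{\sharp_g})^c = \kappa^{\sharp_g} - \sqrt{-1}\,J\kappa^{\sharp_g}$, so by $\mathbb{C}$-linearity of ${\rm div}_\nabla$,
$$
{\rm div}_\nabla\bigl((\kappa^{\sharp_g})^c\bigr) = {\rm div}_\nabla(\kappa^{\sharp_g}) - \sqrt{-1}\,{\rm div}_\nabla(J\kappa^{\sharp_g}).
$$
Thus the statement reduces to two pointwise identities:
$$
{\rm div}_\nabla(\kappa^{\sharp_g}) = |\kappa|^2, \qquad {\rm div}_\nabla(J\kappa^{\sharp_g}) = 0.
$$

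Next I would invoke Proposition 8.11 to fix a bundle-like metric compatible with the K\"ahler structure such that $\kappa$ is basic harmonic, i.e.\ $\delta_B\kappa = \delta_B(J\kappa) = 0$ and $\kappa = \kappa_B$. The key tool is the standard formula for the basic codifferential on a basic $1$-form $\alpha$ on a Riemannian foliation (cf.\ \cite{AJ, JR}),
$$
\delta_B \alpha \;=\; -{\rm div}_\nabla(\alpha^{\sharp_g}) + g_Q(\kappa^{\sharp_g}, \alpha^{\sharp_g}),
$$
which is verified from $(\nabla_{e_i}\alpha)(\bar e_i) = g_Q(\nabla_{e_i}\alpha^{\sharp_g},\bar e_i)$ (since $\nabla g_Q = 0$) together with the mean curvature correction that distinguishes $\delta_B$ from the purely transverse codifferential on a bundle-like foliation.

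Applying this formula to $\alpha = \kappa$ gives $0 = \delta_B\kappa = -{\rm div}_\nabla(\kappa^{\sharp_g}) + |\kappa|^2$, yielding the first identity. Applying it to $\alpha = J\kappa$, one has $(J\kappa)^{\sharp_g} = \pm J\kappa^{\sharp_g}$ (the sign depending on convention, but irrelevant here), and the relevant pairing is $g_Q(\kappa^{\sharp_g}, J\kappa^{\sharp_g}) = 0$ by antisymmetry of $J$ relative to $g_Q$. Hence $0 = \delta_B(J\kappa) = \mp{\rm div}_\nabla(J\kappa^{\sharp_g})$, giving the second identity. Combining the two proves the lemma.

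The only substantive step is the basic codifferential formula above; everything else is formal manipulation and the $J$-orthogonality of $g_Q$. The role of Proposition 8.11 is essential precisely because, without the choice of a bundle-like metric making $\kappa$ basic harmonic, the right-hand sides $\delta_B\kappa$ and $\delta_B(J\kappa)$ would contribute extra terms and the pointwise identification with $|\kappa|^2$ would fail.
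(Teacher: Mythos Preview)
Your proposal is correct and follows essentially the same approach as the paper: both arguments invoke Proposition~8.11 to choose a bundle-like metric with $\delta_B\kappa=\delta_B(J\kappa)=0$, use the relation $\delta_B=\delta_T+i(\kappa^{\sharp_g})$ (equivalently your formula $\delta_B\alpha=-{\rm div}_\nabla(\alpha^{\sharp_g})+g_Q(\kappa^{\sharp_g},\alpha^{\sharp_g})$), and then exploit the $J$-orthogonality $g_Q(\kappa^{\sharp_g},J\kappa^{\sharp_g})=0$. The only difference is presentational: the paper works directly with the complex $1$-form $\kappa^c$ and computes $i(\kappa^{\sharp_g})\kappa^c=|\kappa|^2$ in one line, whereas you separate the real and imaginary parts and handle ${\rm div}_\nabla(\kappa^{\sharp_g})$ and ${\rm div}_\nabla(J\kappa^{\sharp_g})$ individually.
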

\begin{proof}  Let $\delta_T $ be the divergence on the local quotient manifolds in the foliation charts.  That is, $\delta_T= - \sum_{j=1}^{2n} i(e_j)\nabla_{e_j}$ for a local basic frame of $\mathcal F$.  Let $\delta_B$ be the adjoint operator of $d$, that is,  $\delta_B = \delta_T + i(\kappa^{\sharp_g})$ \cite{JR}.  Now, if we chose a bundle-like metric such that the mean curvature  form ia bsic harmonic, then from Proposition 8.11,   $\delta_B\kappa^c=0$.  Since $\kappa^{\sharp_g}$ is the $g_Q$-dual vector to $\kappa$, we get
\begin{align*}
{\rm div}_\nabla(\kappa^{\sharp_g})^c =-\delta_T\kappa^c =-\delta_B\kappa^c + i(\kappa^{\sharp_g})\kappa^c =|\kappa|^2.
\end{align*}
\end{proof}

\begin{cor}  On a transverse  K\"ahler foliation of constant holomorphic sectional curvature $h$, it holds that for any $\varphi\in\Gamma Sp^J_0(\mathcal F)$
\begin{align*}
\mathcal P_{\rm tr}^0\varphi =\nabla_{\rm tr}^*\nabla_{\rm tr}\varphi - \frac{h}4 n(n+1)\varphi+\frac14|\kappa|^2\varphi.
\end{align*}
\end{cor}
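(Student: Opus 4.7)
The plan is to derive this directly from Theorem 8.10 together with Lemma 8.12, so the work is almost entirely bookkeeping. I would begin by recalling the identity (8.11) from Theorem 8.10, which for any $\varphi \in \Gamma Sp_0^J(\mathcal F)$ on a transverse K\"ahler foliation of constant holomorphic sectional curvature $h$ reads
\begin{equation*}
\mathcal P_{\rm tr}^0\varphi = \nabla_{\rm tr}^*\nabla_{\rm tr}\varphi - \frac{h}{4}n(n+1)\varphi - \frac14|\kappa|^2\varphi + \frac12\,{\rm div}_\nabla(\kappa^{\sharp_g})^c\cdot\varphi.
\end{equation*}
The only term that still needs to be simplified is the divergence term.

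To simplify it, I would invoke Proposition 8.11: since $(M,\mathcal F,J,g_Q)$ is a transverse K\"ahler foliation on a closed manifold, one may choose a bundle-like metric compatible with the K\"ahler structure for which the mean curvature form $\kappa$ is basic harmonic. With this choice in place, Lemma 8.12 applies and gives
\begin{equation*}
{\rm div}_\nabla(\kappa^{\sharp_g})^c = |\kappa|^2.
\end{equation*}
Substituting this into the previous identity collapses the last two terms into
\begin{equation*}
-\frac14|\kappa|^2\varphi + \frac12|\kappa|^2\varphi = \frac14|\kappa|^2\varphi,
\end{equation*}
and the stated formula follows immediately.

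There is essentially no analytic obstacle here, since all the nontrivial work has been packaged into Theorem 8.10, Proposition 8.11, and Lemma 8.12. The only point that deserves a sentence of explanation in the write-up is that the metric change provided by Proposition 8.11 is permissible: the operator $\mathcal P_{\rm tr}^0$ is defined in terms of the transverse Levi-Civita connection and the transversely metaplectic structure, and both survive unchanged under the choice of a bundle-like metric compatible with the fixed transverse K\"ahler structure. Once that is noted, the proof is a one-line substitution.
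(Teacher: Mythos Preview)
Your proposal is correct and follows essentially the same route as the paper: the paper's proof simply cites (8.11) in Theorem 8.10 together with Lemma 8.12, and your write-up just unpacks this substitution explicitly (including the implicit appeal to Proposition 8.11 that underlies Lemma 8.12). Your added remark about the permissibility of the bundle-like metric change is a welcome clarification but not a departure from the paper's argument.
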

\begin{proof}  The proof follows  from (\ref{8-11}) in Theorem 8.10 and Lemma 8.12.
\end{proof}

\begin{thm}  On  a transverse  K\"ahler foliation of constant  holomorphic sectional curvature $h$ with $M$ closed,  any eigenvalue $\lambda$ of  $\mathcal P_{\rm tr}^0$ on $\Gamma Sp^J_0(\mathcal F)$ satisfies
\begin{align*}
\lambda \geq - {h\over 4} n(n+1) +\frac 14 \min |\kappa|^2.
\end{align*}
\end{thm}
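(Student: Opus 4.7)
The plan is to apply Corollary 8.13 directly via a standard $L^2$-variational argument. First, I would take an eigenspinor $\varphi \in \Gamma Sp_0^J(\mathcal F)$ with $\mathcal P_{\rm tr}^0\varphi = \lambda\varphi$ and pair the Weitzenböck-type identity
\[
\mathcal P_{\rm tr}^0\varphi = \nabla_{\rm tr}^*\nabla_{\rm tr}\varphi - \frac{h}{4}n(n+1)\varphi + \frac{1}{4}|\kappa|^2\varphi
\]
from Corollary 8.13 with $\varphi$ in the Hermitian inner product $\langle\cdot,\cdot\rangle$ on $Sp(\mathcal F)$ and integrate over the closed manifold $M$.

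Next, I would use the fact that $\nabla_{\rm tr}^*$ is the formal adjoint of $\nabla_{\rm tr}$ (established in Lemma 5.2 via the transversal divergence theorem, Theorem 2.3), so that
\[
\int_M \langle \nabla_{\rm tr}^*\nabla_{\rm tr}\varphi,\varphi\rangle\, \mu_M = \int_M |\nabla_{\rm tr}\varphi|^2\, \mu_M \geq 0.
\]
Combining this with the eigenvalue equation gives
\[
\lambda \int_M |\varphi|^2\, \mu_M = \int_M |\nabla_{\rm tr}\varphi|^2\, \mu_M - \frac{h}{4}n(n+1)\int_M |\varphi|^2\, \mu_M + \frac{1}{4}\int_M |\kappa|^2 |\varphi|^2\, \mu_M.
\]

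Dropping the nonnegative term $\int_M |\nabla_{\rm tr}\varphi|^2 \mu_M$ and bounding $|\kappa|^2 \geq \min|\kappa|^2$ pointwise yields
\[
\lambda \int_M |\varphi|^2\, \mu_M \geq \Bigl(-\frac{h}{4}n(n+1) + \frac{1}{4}\min|\kappa|^2\Bigr)\int_M |\varphi|^2\, \mu_M,
\]
from which the stated lower bound follows after dividing by $\int_M |\varphi|^2\,\mu_M > 0$ (which is nonzero since $\varphi$ is an eigenspinor).

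The argument is essentially mechanical once Corollary 8.13 is in hand; the only mild subtlety is justifying that one may indeed use the bundle-like metric under which Corollary 8.13 was derived (namely the one from Proposition 8.11 making $\kappa$ basic harmonic, so that Lemma 8.12 applies to convert the $\tfrac{1}{2}\operatorname{div}_\nabla(\kappa^{\sharp_g})^c$ term into $\tfrac{1}{4}|\kappa|^2$). Since the eigenvalue $\lambda$ of the formally self-adjoint transversely elliptic operator $\mathcal P_{\rm tr}^0$ is intrinsic, we are free to choose this metric without loss of generality. No further obstacle is expected.
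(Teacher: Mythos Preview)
Your proposal is correct and follows essentially the same route as the paper: integrate the identity from Corollary 8.13 against $\varphi$, use the adjointness of $\nabla_{\rm tr}$ and $\nabla_{\rm tr}^*$, drop the nonnegative $\|\nabla_{\rm tr}\varphi\|^2$ term, and bound $|\kappa|^2$ below by its minimum. One small caveat: your justification that ``the eigenvalue is intrinsic, so we may choose this metric without loss of generality'' is not quite right, since $\mathcal P_{\rm tr}$ itself depends on $\kappa$ and hence on the bundle-like metric; the paper simply works throughout (implicitly) with the metric of Proposition 8.11, and the statement of the theorem should be read in that context.
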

\begin{proof}  Let $\mathcal P_{\rm tr}^0\varphi =\lambda\varphi$ for a $\varphi\in\Gamma S^J_0(\mathcal F)$. From Corollary 8.13, we have
\begin{align*}
\int_M\lambda \Vert\varphi\Vert^2 \mu_M= \int_M\Big(\Vert \nabla_{\rm tr}\varphi\Vert^2 - {h \over 4}n(n+1)\Vert\varphi\Vert^2 +\frac14|\kappa|^2\Vert\varphi\Vert^2\Big)\mu_M.
\end{align*}
Hence  
\begin{align*}
\int_M\Big(\lambda + {h\over 4}n(n+1) -\frac14 |\kappa|^2 \Big)\Vert\varphi\Vert^2  \mu_M = \int_M\Vert \nabla_{\rm tr}\varphi\Vert^2  \mu_M.
\end{align*}
From the equation above,
\begin{align*}
0&\leq \int_M\Big(\lambda + {h\over 4}n(n+1) -\frac14 |\kappa|^2 \Big)\Vert\varphi\Vert^2 \mu_M \\
&\leq \int_M\Big(\lambda + {h\over 4}n(n+1) -\frac14 \min |\kappa|^2 \Big)\Vert\varphi\Vert^2 \mu_M,
\end{align*}
which yields the result.
\end{proof}
\begin{thm} Let $(M,\mathcal F,J,g_Q)$ be a transverse  K\"ahler foliation of constant  and nonpositive holomorphic sectional curvature $h$ on a closed manifold $M$. If $\mathcal F$ is minimal, then any eigenvalue $\lambda$ of $\mathcal P_{\rm tr}$ satisfies
\begin{align*}
\lambda \geq -{h\over 4} n(n-1).
\end{align*}
\end{thm}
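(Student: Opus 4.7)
The plan is a direct Weitzenb\"ock argument combined with the spectral decomposition $\Gamma Sp(\mathcal F)=\bigoplus_\ell \Gamma Sp_\ell^J(\mathcal F)$, which reduces the eigenvalue estimate to a purely algebraic inequality on each level.

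First, since $\mathcal F$ is minimal we have $\kappa=0$, so the Weitzenb\"ock formula in Theorem 8.10 collapses to
$$\mathcal P_{\rm tr}\varphi=\nabla_{\rm tr}^*\nabla_{\rm tr}\varphi+\frac{h}{4}n(n-1)\varphi-2h(\mathcal H^J)^2\varphi.$$
By Proposition 7.1 the operator $\mathcal P_{\rm tr}$ preserves every $\Gamma Sp_\ell^J(\mathcal F)$, and these subbundles are mutually $L^2$-orthogonal; hence any $\lambda$-eigenspinor of $\mathcal P_{\rm tr}$ splits into components each of which is itself a $\lambda$-eigenspinor lying in a single level $\ell\geq 0$. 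It therefore suffices to fix $\varphi\in\Gamma Sp_\ell^J(\mathcal F)$ with $\mathcal P_{\rm tr}\varphi=\lambda\varphi$; by Proposition 6.1, $(\mathcal H^J)^2\varphi=(\ell+\tfrac{n}{2})^2\varphi$ pointwise.

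Then I would pair this against $\varphi$ in $L^2$ and use the formal-adjoint identity $\int_M\langle\nabla_{\rm tr}^*\nabla_{\rm tr}\varphi,\varphi\rangle\mu_M=\int_M|\nabla_{\rm tr}\varphi|^2\mu_M$, which yields
$$\Big(\lambda-\frac{h}{4}n(n-1)+2h(\ell+\tfrac{n}{2})^2\Big)\int_M|\varphi|^2\mu_M=\int_M|\nabla_{\rm tr}\varphi|^2\mu_M\geq 0,$$
and rearranging gives
$$\lambda+\frac{h}{4}n(n-1)\geq \frac{h}{2}n(n-1)-2h(\ell+\tfrac{n}{2})^2=h\Big(-\frac{n}{2}-2\ell(\ell+n)\Big).$$
Since $-\tfrac{n}{2}-2\ell(\ell+n)\leq 0$ for $n\geq 1$, $\ell\geq 0$, and $h\leq 0$ by hypothesis, the right-hand side is nonnegative, producing $\lambda\geq -\tfrac{h}{4}n(n-1)$.

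The only real point to verify is the elementary algebraic identity $\tfrac{n(n-1)}{2}-2(\ell+\tfrac{n}{2})^2=-\tfrac{n}{2}-2\ell(\ell+n)$, in which the potentially harmful contribution from $(\mathcal H^J)^2$ is organised into one sign-definite expression that is neutralised once $h\leq 0$ is imposed. Because the Weitzenb\"ock formula and the eigenvalue structure of $\mathcal H^J$ are already in hand from Sections 6--8, no substantive obstacle arises beyond this bookkeeping; the hypothesis of minimality is used precisely to kill the $|\kappa|^2$-, $P(\kappa^{\sharp_g})$- and $\tilde P(J\kappa^{\sharp_g})$-terms that would otherwise contaminate the Weitzenb\"ock expression.
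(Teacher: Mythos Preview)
Your argument is correct. The paper's own proof is terser: rather than decomposing into the subbundles $Sp_\ell^J(\mathcal F)$, it integrates the Weitzenb\"ock identity of Theorem~8.10 (with $\kappa=0$) against $\varphi$ directly and invokes only the pointwise symmetry $\langle(\mathcal H^J)^2\varphi,\varphi\rangle=|\mathcal H^J\varphi|^2$ from Lemma~6.2, obtaining a one-line inequality with right-hand side $\|\nabla_{\rm tr}\varphi\|^2-2h\|\mathcal H^J\varphi\|^2\ge0$. Your level-by-level treatment extracts more information, since on each $Sp_\ell^J(\mathcal F)$ one has $(\mathcal H^J)^2=(\ell+\tfrac n2)^2\,\mathrm{id}$ rather than merely a nonnegative operator; this quantitative lower bound on $(\mathcal H^J)^2$ is precisely what is needed to reach the stated sign $-\tfrac{h}{4}n(n-1)$ rather than the weaker $+\tfrac{h}{4}n(n-1)$ that the bare estimate $\|\mathcal H^J\varphi\|^2\ge0$ would give, and indeed minimising your expression over $\ell\ge0$ yields the still sharper conclusion $\lambda\ge-\tfrac{h}{4}n(n+1)$. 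The paper's displayed formula in its proof carries a sign slip in the bracket $\lambda\pm\tfrac{h}{4}n(n-1)$; your route sidesteps this and gives the cleaner justification of the bound as stated.
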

\begin{proof} 
Let $\mathcal P_{\rm tr}\varphi =\lambda\varphi$.  Since $\mathcal F$ is minimal and $h\leq 0$,  from (\ref{8-8})
\begin{align*}
\int_M\Big(\lambda+ \frac{h}4 n(n-1)\Big) \Vert\varphi\Vert^2 =\int_M \Vert \nabla_{\rm tr}\varphi\Vert^2  - 2h\int_M\Vert \mathcal H^J(\varphi)\Vert^2\geq 0.
\end{align*}
So the proof is completed.
\end{proof}
\begin{rem}   Theorem 8.15  is a generalization of the point foliation version \cite[Propositon 3.4]{Ha2-1} on a  K\"ahler manifold.
\end{rem}

\end{document}